\documentclass[preprint]{elsarticle}
\usepackage{amsthm,amsmath,amssymb}
\usepackage{color,amssymb}
\usepackage{enumerate,mathdots,multicol}

\newtheorem{Theorem}{Theorem}[section]
\newtheorem{Lemma}[Theorem]{Lemma}
\newtheorem{Proposition}[Theorem]{Proposition}

\newenvironment{Remark}{{\bf Remark.}}

\newcommand{\A}{{\mathcal{A}}}

\newcommand{\C}{{\mathbb C}}

\newcommand{\Ff}{{\mathbb F}}

\newcommand{\RR}{\mathbb{R}}

\newcommand{\mm}{\mathcal{M}}

\newcommand{\wt}{\widetilde{\Phi}}

\newcommand{\s}{\mathcal{H}}

\DeclareMathOperator{\rank}{rank}
\DeclareMathOperator{\Syl}{Syl}
\DeclareMathOperator{\sign}{sign}
\DeclareMathOperator{\id}{id}

\journal{arXiv.org}

\begin{document}

\begin{frontmatter}
\title{Jordan triple product homomorphisms on Hermitian matrices of dimension two}

\author{Damjana Kokol Bukov\v sek}
\ead{damjana.kokol.bukovsek@ef.uni-lj.si}
\address{Faculty of Economics, University of Ljubljana, Kardeljeva plo\v s\v cad 17, Ljubljana,
and Institute of Mathematics, Physics and Mechanics, Department of Mathematics, Jadranska~19, Ljubljana, Slovenia.} 

\author{Bla\v z Moj\v skerc}
\ead{blaz.mojskerc@ef.uni-lj.si}
\address{Faculty of Economics, University of Ljubljana, Kardeljeva plo\v s\v cad 17, Ljubljana,
and Institute of Mathematics, Physics and Mechanics, Department of Mathematics, Jadranska~19, Ljubljana, Slovenia.} 

\begin{abstract}
We characterise all Jordan triple product homomorphisms, that is, mappings $\Phi$ satisfying
$$ \Phi(ABA) = \Phi(A)\Phi(B)\Phi(A) $$
on the set of all Hermitian $2 \times 2$ complex matrices.
\end{abstract}

\begin{keyword}
Matrix algebra \sep Jordan triple product \sep homomorphism \sep Hermitian matrix
\MSC[2010] 16W10 \sep 16W20 \sep 15B57
\end{keyword}
\end{frontmatter}

\section{Introduction}

In order to understand the geometry of matrix spaces, mappings with certain properties are often studied. Among such properties is (anti)multi\-plica\-tivity. The structure of (anti)multiplicative mappings on the algebra $\mm_n(\Ff)$ of $n\times n$ matrices over field $\Ff$ is well understood \cite{JL}, but less is known about (anti)multiplicative mappings from $\mm_n(\Ff)$ to $\mm_m(\Ff)$ for $m>n$.

In a well known survey paper \cite{Semrl} \v{S}emrl presented many facts and properties of such mappings, along with properties of preservers of Jordan and Lie product. \v{S}emrl exposed a related problem, that is, to characterize maps that are multiplicative with respect to {\it Jordan triple product} (J.T.P. for short), namely maps $\Phi$ on $\mm_n(\Ff)$ satisfying 
$$ \Phi(ABA) = \Phi(A)\Phi(B)\Phi(A) $$
for all $A,B \in \mm_n(\Ff)$. Such mappings were studied under additional assumption of additivity on quite general
domain of certain rings \cite{Bresar}. In response to \v{S}emrl, Kuzma characterized nondegenerate J.T.P. homomorphisms on the set $\mm_n(\Ff)$ in \cite{Kuzma}
for $n \ge 3$, in \cite{Dob1} Dobovi\v sek characterized J.T.P. homomorphisms from $\mm_n(\Ff)$ to $\Ff$, and in \cite{Dob2} he
characterized J.T.P. homomorphisms from $\mm_2(\Ff)$ to $\mm_3(\Ff)$.

\medskip

In this paper we focus on J.T.P. homomorphisms on the set of all Hermitian complex $2 \times 2$ matrices. By $A^*$ denote the
complex conjugate of the transpose of matrix $A$ and by $\s_2(\C)$ the set of all Hermitian complex $2 \times 2$ matrices
$$ \s_2(\C) = \{ A \in \mm_2(\C); A = A^*\}. $$
We cannot study multiplicative or antimultiplicative maps on Hermitian matrices, since they are not closed under multiplication. 
But they are closed under J.T.P., so studying J.T.P. homomorphisms on Hermitian matrices makes perfect sense. Characterization 
of J.T.P. homomorphisms on the set of Hermitian matrices may shed a new light on the structure of Hermitian matrices and may be useful 
in the areas where only Hermitian or positive (semi)definite matrices appear, such as some areas of financial mathematics.

Jordan triple product homomorphisms were already studied on the set of positive definite matrices, Gselmann \cite{Gsel} characterized mappings from the set of positive definite real or complex matrices to the field of real numbers.
In the paper \cite{KBM}, similar result was proved, namely Jordan triple product homomorphisms 
from the set of all Hermitian $n \times n$ complex matrices to the field of complex numbers and
Jordan triple product homomorphisms from the field of complex or real numbers
or the set of all nonnegative real numbers to the set of all Hermitian $n \times n$ complex matrices were characterized.
Further, Hao et al. \cite{Hao} characterized injective Jordan triple product endomorphisms on the set of complex symmetric matrices, and Molnar in \cite{Molnar} described continuous Jordan triple endomorphisms on the set of complex positive definite matrices of size at least 3. 
The special case of $2 \times 2$ positive definite complex matrices was considered separately in \cite{MV}. One may think that in this case the solution can be found straightforwardly, but this is far from being true. We generalize this result by omitting the continuity assumption and enlarging the set of matrices to all complex Hermitian matrices.

\medskip

The paper is organized as follows. In section 2 we state the characterization theorem for J.T.P. homomorphisms on $\s_2(\C)$. In section 3 we list some results on J.T.P. homomorphisms on the set $\s_n(\C)$ and main results from \cite{KBM} which we will find useful later on. In sections 4--7 we treat different cases of J.T.P. homomorphisms, namely irregular, scalar, nondegenerate and degenerate cases. 

\section{Characterization Theorem}

We first introduce some notation.
By $I$ we denote the identity matrix of an appropriate dimension, by $\det A$ the determinant and by $\rank A$ the 
rank of a matrix $A$. By $\sigma(A)$ we denote the spectrum of a matrix $A$, and by $\Syl(A)$ the inertia of $A$, that is, the number of positive eigenvalues of $A$.
The direct sum $A \oplus B$ is a block diagonal matrix $\left[
\begin{array}{cc}
A & 0 \\
0 & B
\end{array} \right]$. 
The notation $A>0$ means that a matrix $A\in \s_2(\C)$ is positive definite, $A<0$ is a negative definite matrix and $A<>0$ is an invertible nondefinite matrix.

\medskip

We can now state our main result.

\begin{Theorem} \label{main}
Let $\Phi: \s_2(\C) \to \s_2(\C)$. Then $\Phi(ABA)=\Phi(A)\Phi(B)\Phi(A)$ if and only if there exists some unitary matrix $U\in\mm_2(\C)$ such that $\Phi$ has one of the following forms:
\begin{enumerate}[(i)]
	\item $\Phi(A)=U \begin{bmatrix} \varphi_1(A) & 0 \\ 0 & \varphi_2(A) \end{bmatrix} U^*$ where $\varphi_1,\varphi_2: \s_2(\C) \to \RR$ are J.T.P. homomorphisms having the form $\varphi_i(A)=\psi_i(|\det A|) \eta_i(\Syl(A))$ for $i=1, 2,$ 
with $\psi_1, \psi_2: [0,\infty) \to \C$ multiplicative functions, $\eta_1, \eta_2:\{0,1,2\} \to \{-1,1\}$ arbitrary mappings, and $\Syl(A)$ the inertia of $A$;
	\item $\Phi(A)=\pm UAU^*$;
	\item $\Phi(A)= \pm U\bar{A}U^*= \pm UA^TU^*$;
	\item $\Phi(A)= \begin{cases} \pm \beta(\det A)\cdot U \widetilde{\Phi}(A) U^*; & \rank A=2 \\ 0 & \rank A \leq 1 \end{cases}$, where $\beta:\RR^* \to \RR^*$ is a unital multiplicative map, and $\widetilde{\Phi}$ has one of the following forms:
		\begin{multicols}{2}
		\begin{itemize}
			\item $\wt(A)=A$;
			\item $\wt(A)=\bar{A}$;
			\item $\wt(A)=A^{-1}$;
			\item $\wt(A)=\bar{A}^{-1}$;
			\item $\wt(A)=\eta(A)A$;
			\item $\wt(A)=\eta(A) \bar{A}$;
			\item $\wt(A)=\eta(A) A^{-1}$;
			\item $\wt(A)=\eta(A) \bar{A}^{-1}$;
		\end{itemize}
		\end{multicols}
\end{enumerate}
with $$\eta(A)= \left\{
\begin{array}{rl}
1; & A>0 \textrm{ or } A<>0 \\
-1; & A<0
\end{array} \right. .$$
\end{Theorem}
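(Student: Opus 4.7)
The plan is to prove sufficiency by direct substitution and to prove necessity via case analysis on the value of $\Phi(I)$.

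For sufficiency, in case (i) the map is block-diagonal, so the J.T.P. identity reduces to the same identity for each diagonal entry $\varphi_i$; the assertion that $\varphi_i(A)=\psi_i(|\det A|)\eta_i(\Syl(A))$ is a scalar J.T.P. homomorphism $\s_2(\C)\to\RR$ follows from \cite{KBM}. For (ii) and (iii), $A\mapsto UAU^*$ and $A\mapsto \bar A$ are restrictions of a ring isomorphism and a ring anti-isomorphism of $\mm_2(\C)$, both preserving the Jordan triple product trivially. For (iv), one combines the anti-multiplicativity of inversion $(ABA)^{-1}=A^{-1}B^{-1}A^{-1}$ with the multiplicativity of $\beta(\det\cdot)$ under $\det(ABA)=(\det A)^2\det B$ and with $\eta(ABA)=\eta(B)$ on invertible Hermitian matrices; the extension by zero on $\rank\le 1$ matrices is consistent since $\rank(ABA)\le\min(\rank A,\rank B)$.

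For necessity, the substitutions $A=I$, $B=I$, and $A=B=I$ yield $\Phi(B)=\Phi(I)\Phi(B)\Phi(I)$, $\Phi(A^2)=\Phi(A)\Phi(I)\Phi(A)$, and $\Phi(I)^3=\Phi(I)$. Since $\Phi(I)$ is Hermitian and annihilates $x^3-x$, its spectrum lies in $\{-1,0,1\}$; absorbing a unitary conjugation into the codomain I may assume $\Phi(I)=\diag(\varepsilon_1,\varepsilon_2)$ with $\varepsilon_i\in\{-1,0,1\}$. If $\Phi(I)=0$ then $\Phi\equiv 0$, a degenerate instance of (i). If $\rank\Phi(I)=1$, the identity $\Phi(B)=\Phi(I)\Phi(B)\Phi(I)$ forces $\Phi(B)$ into the $(1,1)$-corner, reducing $\Phi$ to a single scalar J.T.P. homomorphism classified by \cite{KBM} and giving case (i). If $\rank\Phi(I)=2$ with $\Phi(I)=\pm\diag(1,-1)$, I would show the off-diagonal entries of $\Phi(B)$ are killed by testing the J.T.P. identity on rank-1 Hermitian projections, so $\Phi$ again splits as a direct sum of two scalar J.T.P. homomorphisms, producing case (i). The remaining subcase $\Phi(I)=\pm I$ gives $\Phi(A^2)=\pm\Phi(A)^2$ and constitutes the nondegenerate regime where forms (ii), (iii), (iv) live.

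The principal obstacle is this nondegenerate case, where $\Phi$ preserves squares up to sign but nothing a priori enforces additivity or continuity. The plan is to further split by whether $\Phi$ annihilates every singular Hermitian matrix. In the non-vanishing subcase, $\Phi$ is determined by its action on rank-1 projections, which are parameterized by $\PR^1$; applying the J.T.P. identity to carefully chosen pairs of projections and to products like $A=P+tQ$ with $B$ a projection, one pins $\Phi$ down (up to unitary conjugation and sign) to $A\mapsto A$ or $A\mapsto \bar A$, yielding (ii) and (iii). In the vanishing subcase, $\Phi$ is nonzero only on the invertible locus; writing $\Phi(A)=\pm\beta(\det A)\,U\wt(A)U^*$, the multiplicativity of $\beta:\RR^*\to\RR^*$ is extracted from the J.T.P. identity on commuting definite matrices, while the enumeration of the eight candidates for $\wt$ listed in (iv) is forced by the (anti-)multiplicative behavior of J.T.P.-preserving maps on $\Gl{2}{\C}\cap\s_2(\C)$. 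The hardest step is this enumeration without any continuity hypothesis, where the inertia $\Syl$ and the sign factor $\eta(A)$ must be tracked separately on positive, negative, and indefinite invertible matrices to justify the full list of eight subcases.
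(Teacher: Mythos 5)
Your overall architecture (a case analysis on $\Phi(I)$, followed by a split of the $\Phi(I)=\pm I$ case according to whether singular matrices are annihilated) runs parallel to the paper, and the sufficiency half is fine. But the necessity half has two genuine structural gaps. First, you never examine $\Phi(0)$, and conditioning on $\Phi(I)$ alone is not enough: the map $\Phi(A)=\diag(1,\varphi(A))$ with $\varphi(A)=1$ for invertible $A$ and $\varphi(A)=0$ otherwise is a J.T.P.\ homomorphism with $\Phi(I)=I$ but $\Phi(0)=\diag(1,0)\neq 0$; in your scheme it lands in the ``$\Phi(I)=\pm I$, non-vanishing'' bin and should therefore be $\pm UAU^*$ or $\pm U\bar{A}U^*$, which it is not (it is of form (i)). The paper avoids this by first running the trichotomy on $\Phi(0)$ (invertible, rank one, zero) and only then defining the regular case by $\Phi(0)=0$ \emph{and} $\Phi(I)=I$; moreover, within the regular case it first disposes of the possibility that the nontrivial involution $\diag(1,-1)$ is sent to $\pm I$, which again produces form (i) rather than (ii)--(iv). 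Second, your vanishing subcase cannot always be written as $\pm\beta(\det A)\,U\widetilde{\Phi}(A)U^*$: the map sending an invertible $A$ to $\diag(\psi_1(|\det A|),\psi_2(|\det A|))$ and a singular $A$ to $0$, with two genuinely different multiplicative $\psi_i$ (say $\psi_1(x)=x$, $\psi_2(x)=x^2$), is a regular, degenerate J.T.P.\ homomorphism of form (i) and not of form (iv). The paper separates these outputs by two further dichotomies --- whether $\Phi(\lambda I)$ is always scalar, and whether the images of $\diag(-1,1)$ and the swap involution commute --- which your sketch omits entirely.

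Beyond the structure, the two steps you yourself flag as hard are exactly where the paper's real work lies, and your sketch supplies no mechanism for either. In the non-vanishing (nondegenerate) case one initially only knows $\Phi(\lambda I)=\Psi(\lambda)I$ for some multiplicative $\Psi:\RR\to\RR$, and a wild multiplicative $\Psi$ would defeat the conclusion $\Phi(A)=UAU^*$; the paper obtains $\Psi(x-y)=\Psi(x)+\Psi(-y)$ by comparing traces of the images of two similar matrices, whence $\Psi$ is additive and multiplicative and therefore the identity. In the degenerate case the exponent of the multiplicative map $\gamma$ is pinned to $\pm 1$ by deriving an explicit functional equation for $\gamma$, showing the associated additive function is bounded on an open interval (hence linear, so $\gamma(x)=x^c$), and then letting $x\to\infty$ to exclude every $c$ except $\pm 1$. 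Without these two arguments, the passage from ``multiplicative data on diagonal matrices'' to the specific forms (ii)--(iv) does not follow.
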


\noindent
It is obvious that mappings of the forms described in (i)--(iv) are J.T.P. homomorphisms on $\s_2(\C)$.

\section{Preliminaries}

In this section we present some properties of J.T.P. homomorphisms on the set $\s_n(\C)$ we will use later on. These properties with proofs can be found in \cite{KBM}. 
We start with a simple lemma.

\begin{Lemma}[Lemma 2.1 in \cite{KBM}] \label{lemma21}
Let $A\in \s_2(\C)$ be a Hermitian matrix. Then there exists a unitary Hermitian matrix $B\in \s_2(\C)$ such that
$A=B (\lambda_1 \oplus \lambda_2) B$
with $\lambda_1, \lambda_2 \in \RR$.
\end{Lemma}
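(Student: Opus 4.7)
I would deduce the lemma directly from the spectral theorem. Writing $A = \lambda_1 v_1 v_1^* + \lambda_2 v_2 v_2^*$ with real eigenvalues $\lambda_1,\lambda_2$ and an orthonormal eigenbasis $\{v_1,v_2\}$ of $\C^2$, the key observation is that a matrix $B \in \s_2(\C)$ is simultaneously unitary and Hermitian precisely when $B = B^*$ and $B^2 = I$. Any such $B$ sends the standard basis to an orthonormal basis $\{Be_1, Be_2\}$, and
\[
B(\lambda_1 \oplus \lambda_2)B = \lambda_1 (Be_1)(Be_1)^* + \lambda_2 (Be_2)(Be_2)^*,
\]
so the lemma reduces to constructing a Hermitian unitary $B$ for which $(Be_i)(Be_i)^* = v_i v_i^*$ for $i = 1,2$.

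The rank-one projection $v_1 v_1^*$ is invariant under multiplication of $v_1$ by a unit scalar, so I may normalize the first coordinate of $v_1 = (p,q)^T$ to be real and nonnegative. This suggests the explicit candidate
\[
B := \begin{pmatrix} p & \bar q \\ q & -p \end{pmatrix}.
\]
Since $p \in \RR$, one has $B = B^*$, and from $\operatorname{tr} B = 0$ and $\det B = -(p^2 + |q|^2) = -1$ one deduces that the eigenvalues of $B$ are $\pm 1$, whence $B^2 = I$. Hence $B$ is Hermitian and unitary.

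To finish, a direct computation gives $Be_1 = (p,q)^T = v_1$, while $Be_2 = (\bar q, -p)^T$ is a unit vector orthogonal to $v_1$ and is therefore a unit scalar multiple of $v_2$; thus $(Be_2)(Be_2)^* = v_2 v_2^*$. Consequently
\[
B(\lambda_1 \oplus \lambda_2)B = \lambda_1 v_1 v_1^* + \lambda_2 v_2 v_2^* = A,
\]
as required. The only conceptual point is the phase normalization of $v_1$; after that the verification is immediate. The wholly degenerate case $A = \mu I$ is absorbed into the same formula by choosing $v_1 = e_1$, which yields $B = \diag(1,-1)$ and $B(\mu I)B = \mu I$.
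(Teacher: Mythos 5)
Your proof is correct. Note that the paper itself gives no argument for this lemma --- it is quoted from \cite{KBM} and the proof is deferred there --- so there is nothing in the present text to compare against; your spectral-theorem argument with the explicit traceless Hermitian unitary $B=\left[\begin{smallmatrix} p & \bar q \\ q & -p\end{smallmatrix}\right]$ is a clean, self-contained verification, and this is essentially the standard way the fact is established. All the steps check out: the phase normalization making $p\ge 0$ real is legitimate because $v_1v_1^*$ is phase-invariant, $\operatorname{tr}B=0$ and $\det B=-1$ together with Hermiticity force $B^2=I$, and $Be_2$ being a unit vector orthogonal to $v_1$ in $\C^2$ indeed pins down $(Be_2)(Be_2)^*=v_2v_2^*$, so $B(\lambda_1\oplus\lambda_2)B=A$ as claimed.
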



We continue with characterization of J.T.P. homomorphisms mapping from $\s_2(\C)$ to $\C$.
 
\begin{Lemma}[Lemma 3.1 in \cite{KBM}] \label{rank-less-n}
Let $\Phi: \s_2(\C) \to \C$ be a J.T.P. homomorphism with $\Phi(I)=1$ and $\Phi(0)=0$. Then $\Phi(A)=0$ for every $A\in \s_2(\C)$ with $\rank A<2$.
\end{Lemma}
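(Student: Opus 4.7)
The plan is to reduce to rank $1$ (rank $0$ is $A = 0$, which maps to $0$ by hypothesis) and then parametrise each rank-$1$ Hermitian matrix as $\lambda P$, where $P = vv^*$ is a rank-$1$ orthogonal projection and $\lambda \in \RR$. It will suffice to prove $\Phi(\lambda P) = 0$ for every such pair. My starting observation is that $PIP = P^2 = P$, so $\Phi(P) = \Phi(P)^2$ and hence $\Phi(P) \in \{0, 1\}$. I would argue by contradiction, assuming $\Phi(P) = 1$ for some rank-$1$ projection $P = vv^*$; the lemma then follows by an easy extension.

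Fix such a $P$ and set $g(\lambda) = \Phi(\lambda P)$ for $\lambda \in \RR$. Plugging $A = \lambda P$, $B = \mu P$ into the J.T.P. identity, together with $(\lambda P)(\mu P)(\lambda P) = \lambda^2 \mu P$, yields
\begin{equation*}
g(\lambda^2 \mu) \;=\; g(\lambda)^2 g(\mu),
\end{equation*}
and in particular $g(\lambda^2) = g(\lambda)^2$ (since $g(1) = 1$). For any other rank-$1$ projection $R = ww^*$, a direct calculation gives $PRP = |\langle v, w\rangle|^2 P$; setting $t = |\langle v, w\rangle|^2$, the identity reads $g(t) = \Phi(P)^2 \Phi(R) = \Phi(R)$, so $g(t) \in \{0,1\}$ for every $t \in [0,1]$.

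To pin down $g(\tfrac12)$, I would exploit the two-dimensional fact that for an orthonormal pair $w_1, w_2$ one has $|\langle v, w_1\rangle|^2 + |\langle v, w_2\rangle|^2 = 1$; taking $w_1 = \tfrac{1}{\sqrt2}(v + v^\perp)$, $w_2 = \tfrac{1}{\sqrt2}(v - v^\perp)$ forces both squares to equal $\tfrac12$. Writing $R_i = w_i w_i^*$, the orthogonality relation $R_1 R_2 R_1 = 0$ gives $\Phi(R_1)^2 \Phi(R_2) = 0$; but $\Phi(R_1) = \Phi(R_2) = g(\tfrac12) \in \{0,1\}$, so $g(\tfrac12)^2 = 0$ and hence $g(\tfrac12) = 0$. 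Substituting $\lambda = 1/\sqrt2$ into the multiplicative identity then yields $g(\mu/2) = g(\tfrac12)\, g(\mu) = 0$ for every $\mu \in \RR$, so $g \equiv 0$, contradicting $g(1) = 1$. Therefore $\Phi(P) = 0$ for every rank-$1$ projection $P$.

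With this in hand, the same identity $g(\lambda^2 \mu) = g(\lambda)^2 g(\mu)$, now with $g(1) = 0$, gives $g(\lambda^2) = 0$ for all $\lambda$; taking $\mu = -1$ yields $g(-\lambda^2) = g(\lambda)^2 g(-1) = 0$, so $g \equiv 0$ on $\RR$. I expect the main obstacle to be the extraction of $g(\tfrac12) = 0$: it hinges on the special coincidence, peculiar to two dimensions, that the orthogonal complement of a unit vector making angle $\pi/4$ with $v$ still makes angle $\pi/4$ with $v$. In higher dimensions this self-orthogonality argument breaks and a different idea would be required.
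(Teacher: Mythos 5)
Your argument is correct. The only point that deserves an explicit word is the very last line: from $g(\lambda^2\mu)=g(\lambda)^2g(\mu)$ with $g(1)=0$ you get $g(t)=0$ for all $t>0$, and then in $g(-\lambda^2)=g(\lambda)^2g(-1)$ you may take $\lambda>0$, so $g(\lambda)=0$ is already known and the product vanishes; as written the ``$=0$'' looks unjustified for a general $\lambda$. Note, however, that the paper does not prove this lemma at all: it is quoted verbatim as Lemma~3.1 of \cite{KBM}, so there is no in-text proof to match. The closest analogue that \emph{is} proved here is Lemma~\ref{rang_1_v_0} (the matrix-valued version), and its method is genuinely different and more elementary: writing $E_{11}=SE_{22}S$ for the swap involution $S$ gives $\Phi(E_{11})=\Phi(E_{22})$ (since $\Phi(S)^2=\Phi(I)=1$), whence $\Phi(aE_{11})=\Phi(E_{11}\diag(a,c)E_{11})=\Phi(E_{22}\diag(a,c)E_{22})=\Phi(cE_{22})$ for all $a,c$, and setting $c=0$ kills everything at once --- no case distinction, no contradiction, and no use of the value semigroup $\{0,1\}$. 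Your route instead exploits the transition-probability identity $PRP=|\langle v,w\rangle|^2P$ and the $\pi/4$-rotated orthonormal pair to force $g(\tfrac12)=0$; it is a nice self-contained argument, and (contrary to your closing remark) it does not actually break for $n>2$, since the pair $w_1,w_2$ can always be chosen inside a two-dimensional subspace containing $v$. What the paper's trick buys is brevity; what yours buys is that it isolates exactly which structural fact about rank-one projections is being used.
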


\begin{Proposition}[Theorem 3.3 in \cite{KBM}] \label{thm3.3}
Let $\Phi$ be a mapping from $\s_2(\C)$ to $\C$. Then $\Phi$ satisfies the identity $\Phi(ABA)=\Phi(A) \Phi(B) \Phi(A)$ 
if and only if $\Phi$ has the form
$$\Phi(A)=\Psi(|\det A|) \eta(\Syl(A)),$$ 
where $\Psi: [0,\infty) \to \C$ is a multiplicative function, $\eta:\{0,1,2\} \to \{-1,1\}$ an arbitrary mapping, and $\Syl(A)$ the inertia of $A$.
\end{Proposition}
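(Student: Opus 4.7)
The ``if'' direction is routine. Since $A$ is Hermitian, the map $B \mapsto ABA$ is a congruence, so when $A$ is invertible Sylvester's law of inertia gives $\Syl(ABA) = \Syl(B)$, and $|\det(ABA)| = |\det A|^2 |\det B|$. Multiplicativity of $\Psi$ and $\eta(\Syl(A))^2 = 1$ then yield the J.T.P. identity for invertible $A$; for singular $A$ both sides vanish provided $\Psi(0)=0$.

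For the converse, I would first sort out the scalar data. Setting $A=B=I$ gives $\Phi(I)^3 = \Phi(I)$, so $\Phi(I) \in \{-1,0,1\}$. The case $\Phi(I)=0$ forces $\Phi\equiv 0$ via $\Phi(A)=\Phi(AIA)=\Phi(A)^2 \Phi(I)$. The case $\Phi(I)=-1$ reduces to $\Phi(I)=1$ by replacing $\Phi$ with $-\Phi$, which only flips $\eta(2)$. So assume $\Phi(I)=1$; then $\Phi(0)=\Phi(0)^2$ gives $\Phi(0)\in\{0,1\}$. If $\Phi(0)=1$ the identity $\Phi(A\cdot 0\cdot A)=\Phi(A)^2\Phi(0)$ forces $\Phi(A)^2 = 1$ for all $A$, which already fits the claimed form with $\Psi\equiv 1$. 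Otherwise $\Phi(0)=0$, and Lemma \ref{rank-less-n} gives $\Phi(A)=0$ on all rank-deficient matrices.

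Next, I would show $\Phi$ depends only on the eigenvalues. By Lemma \ref{lemma21}, any $A$ with $\rank A = 2$ factors as $A = BDB$ with $B$ Hermitian unitary and $D = \lambda_1 \oplus \lambda_2$. From $B^2 = I$, the identity $1 = \Phi(I) = \Phi(BIB) = \Phi(B)^2$ gives $\Phi(B)=\pm 1$, hence $\Phi(A) = \Phi(B)^2\Phi(D) = \Phi(D)$. Defining $f(\lambda_1,\lambda_2) := \Phi(\diag(\lambda_1,\lambda_2))$ and restricting the J.T.P. identity to diagonals yields
$$ f(\lambda_1^2\mu_1,\,\lambda_2^2\mu_2) \;=\; f(\lambda_1,\lambda_2)^2\, f(\mu_1,\mu_2), $$
with $f$ symmetric in its two arguments (conjugating by the permutation matrix, which is Hermitian unitary).

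The final and most delicate step is to solve this functional equation over the reals. Writing $g(t):=f(t,1)$ and using the factorization $\diag(\lambda,\mu) = \diag(\sqrt{\lambda},1)\diag(1,\mu)\diag(\sqrt{\lambda},1)$ valid for $\lambda>0$, together with the symmetry of $f$, one gets $f(\lambda,\mu)=g(\lambda)g(\mu)$ for $\lambda,\mu>0$, with $g$ multiplicative on $(0,\infty)$. The signs $c:=f(1,-1)$ and $d:=f(-1,-1)$ satisfy $c^2 = d^2 = 1$ (since $\diag(\pm 1,-1)^2 = I$). Applying analogous factorizations that pull out the diagonal sign matrices $\diag(1,-1)$ and $\diag(-1,-1)$ gives
$$ f(\lambda,-\mu) \;=\; g(\lambda\mu)\,c \qquad\text{and}\qquad f(-\lambda,-\mu) \;=\; g(\lambda\mu)\,d $$
for $\lambda,\mu>0$. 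Setting $\Psi:=g$ on $(0,\infty)$ with $\Psi(0):=0$, and $\eta(2):=1$, $\eta(1):=c$, $\eta(0):=d$, one obtains the claimed form $\Phi(A) = \Psi(|\det A|)\eta(\Syl(A))$, since $|\det\diag(\lambda_1,\lambda_2)| = |\lambda_1\lambda_2|$ and $\Syl$ counts positive eigenvalues. The main obstacle is exactly this sign analysis: real square roots of negative entries do not exist, so the two extra $\pm 1$ degrees of freedom must be introduced as independent sign data and shown to be globally consistent with the functional equation.
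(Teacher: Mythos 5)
The paper never proves this proposition: it is quoted from \cite{KBM} (Theorem 3.3 there), and Section 3 explicitly defers all proofs of the preliminaries to that reference, so there is no in-paper argument to compare yours against line by line. Taken on its own, your proof of the ``only if'' direction is essentially correct and takes the natural route: reduce to diagonal matrices via the Hermitian-unitary congruence of Lemma \ref{lemma21} (so $\Phi(A)=\Phi(B)^2\Phi(D)=\Phi(D)$ with $D$ the eigenvalue matrix), pass to the symmetric functional equation $f(\lambda_1^2\mu_1,\lambda_2^2\mu_2)=f(\lambda_1,\lambda_2)^2f(\mu_1,\mu_2)$, and separate the multiplicative modulus $g$ from the two sign parameters $c=f(1,-1)$, $d=f(-1,-1)$. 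The individual steps check out: $g(1)=1$ gives $g(\sqrt{\lambda})^2=g(\lambda)$, pulling out $\diag(\sqrt{\lambda},\sqrt{\mu})$ gives $f(\lambda,-\mu)=g(\lambda\mu)c$ and $f(-\lambda,-\mu)=g(\lambda\mu)d$, and the identification with $\Psi(|\det A|)\eta(\Syl(A))$ follows. Global consistency of the signs is automatic here, since you are only extracting necessary conditions.

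Two soft spots. First, in the branch $\Phi(I)=1$, $\Phi(0)=1$ you conclude $\Phi(A)^2=1$ for all $A$ and assert this ``already fits the claimed form''; that does not follow, since a $\{\pm 1\}$-valued map need not factor through the inertia. The fix is one line you skipped: $\Phi(B)=\Phi(0)=\Phi(0\cdot B\cdot 0)=\Phi(0)^2\Phi(B)$ read the right way, i.e.\ $\Phi(0)=\Phi(0)^2\Phi(B)$ forces $\Phi(B)=1$ for every $B$, so $\Phi\equiv 1$, which does have the claimed form. Second, your proviso ``provided $\Psi(0)=0$'' in the ``if'' direction is doing real work and should not be left implicit: a multiplicative $\Psi$ may have $\Psi(0)=1$, and then the displayed form need not satisfy the identity (take $\Psi\equiv 1$, $\eta(0)=\eta(2)=1$, $\eta(1)=-1$, $A=\diag(0,-1)$, $B=I$: then $\Phi(ABA)=\eta(1)=-1$ while $\Phi(A)\Phi(B)\Phi(A)=1$). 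So the proposition as quoted is literally an ``iff'' only if one either imposes $\Psi(0)=0$ or handles the degenerate $\Psi(0)=1$ case (where $\eta$ must be constant) separately; a complete proof of the stated equivalence should say so explicitly. Neither point affects the substance of your main argument.
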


We also need the characterization of J.T.P. homomorphisms from matrices of dimension one to $n \times n$ Hermitian matrices. 

\begin{Lemma}[Lemma 4.1 in \cite{KBM}] \label{C_to_invertible}
Let a mapping $\Phi: \A \to \s_n(\C)$ be a J.T.P. homomorphism where $\A$ is the set $\C^*$, $\RR^*$, or $\RR^+$, such that 
$\Phi(\lambda)$ is invertible for every $\lambda \in \A$ and $\Phi(1) = I$. Then there exist a unitary matrix $U$ and 
multiplicative maps $\varphi_1, \varphi_2: \A \to \RR^*$ with $\varphi_i(1) = 1$, such that
$$\Phi(\lambda) = U (\varphi_1(\lambda) \oplus \varphi_2(\lambda)) U^*, \quad \lambda \in \A.$$
\end{Lemma}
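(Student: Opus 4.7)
The plan is to show that $\{\Phi(\lambda): \lambda \in \A\}$ is a commuting family of Hermitian matrices; one unitary $U$ then simultaneously diagonalizes them, and the multiplicativity of the diagonal entries $\varphi_i$ falls out from the J.T.P.\ identity.

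Preliminary reductions: since $\A$ is commutative and contains inverses, the identity $\Phi(\lambda\mu\lambda)=\Phi(\lambda)\Phi(\mu)\Phi(\lambda)$ reduces to $\Phi(\lambda^2\mu)=\Phi(\lambda)\Phi(\mu)\Phi(\lambda)$. Specializing $\mu=1$ and $\mu=\lambda^{-1}$ gives $\Phi(\lambda^2)=\Phi(\lambda)^2$ and $\Phi(\lambda^{-1})=\Phi(\lambda)^{-1}$.

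For $\A=\RR^+$ or $\A=\C^*$ every element is a square (iteratively), so $M_\lambda:=\Phi(\lambda^{1/2})=\Phi(\lambda^{1/4})^2$ and $\Phi(\lambda)=M_\lambda^2$ are both positive definite. Writing $\lambda\mu$ as both $\lambda^{1/2}\mu\lambda^{1/2}$ and $\mu^{1/2}\lambda\mu^{1/2}$ gives
$$M_\lambda M_\mu^2 M_\lambda = \Phi(\lambda\mu) = M_\mu M_\lambda^2 M_\mu,$$
i.e.\ $(M_\lambda M_\mu)(M_\lambda M_\mu)^* = (M_\lambda M_\mu)^*(M_\lambda M_\mu)$, so $M_\lambda M_\mu$ is normal. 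By conjugation with $M_\lambda^{1/2}$ it is similar to the positive definite matrix $M_\lambda^{1/2} M_\mu M_\lambda^{1/2}$, so its spectrum is positive real; a normal matrix with real spectrum is Hermitian, forcing $M_\lambda M_\mu = M_\mu M_\lambda$. The $\Phi(\lambda)$'s therefore pairwise commute, are simultaneously diagonalizable, and $\Phi(\lambda\mu)=M_\lambda^2 M_\mu^2=\Phi(\lambda)\Phi(\mu)$ transfers multiplicativity to each diagonal entry, with $\varphi_i(1)=1$ from $\Phi(1)=I$.

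For $\A=\RR^*$, I apply the previous case to $\Phi|_{\RR^+}$ to fix a unitary $U$ diagonalizing all $\Phi(\mu)$, $\mu>0$. Then $\Phi(-1)^2=I$ together with $\Phi((-1)\mu(-1))=\Phi(\mu)$ shows $\Phi(-1)$ is a Hermitian involution commuting with every $\Phi(\mu)$, $\mu>0$; while $\Phi(\mu(-1)\mu)=\Phi(-1)\Phi(\mu)^2$ shows $\Phi(-\nu)=\Phi(-1)\Phi(\nu)$ for all $\nu>0$, so it suffices to diagonalize $\Phi(-1)$ simultaneously with the rest. A final block-diagonal unitary rotation within each joint eigenspace of $\{\Phi(\mu):\mu>0\}$, on which every $\Phi(\mu)$ acts as a scalar and is therefore preserved, brings the Hermitian involution $\Phi(-1)$ to diagonal form. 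The main obstacle is the normal-plus-positive-spectrum step in Case 1: for general Hermitian invertible $P,Q$ the relation $PQ^2P=QP^2Q$ does not force $PQ=QP$, which is exactly why the direct square-root argument fails for $\A=\RR^*$ and one must handle $\Phi(-1)$ by the separate commutativity route above.
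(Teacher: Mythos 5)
The paper does not actually prove this lemma: it is imported verbatim as Lemma~4.1 of \cite{KBM}, so there is no in-text argument to compare yours against. Judged on its own, your proof is correct and self-contained. The reductions $\Phi(\lambda^2)=\Phi(\lambda)^2$ and $\Phi(\lambda^{-1})=\Phi(\lambda)^{-1}$ are right; for $\A=\RR^+$ or $\C^*$ the square-root trick does make every $M_\lambda=\Phi(\lambda^{1/2})=\Phi(\lambda^{1/4})^2$ positive definite; the identity $M_\lambda M_\mu^2 M_\lambda=M_\mu M_\lambda^2 M_\mu$ is exactly normality of $M_\lambda M_\mu$, and combining this with the positivity of the spectrum (via similarity to $M_\lambda^{1/2}M_\mu M_\lambda^{1/2}$) to force Hermitianity and hence commutativity is a clean and genuinely nontrivial step --- this is where the work is, and your counterexample remark (e.g.\ $P=\mathrm{diag}(1,-1)$, $Q=\left[\begin{smallmatrix}0&1\\1&0\end{smallmatrix}\right]$ satisfy $PQ^2P=QP^2Q$ without commuting) correctly explains why $\RR^*$ needs the separate treatment of $\Phi(-1)$. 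The $\RR^*$ case is also handled correctly: $\Phi(-1)$ is a Hermitian involution commuting with the positive part, $\Phi(-\nu)=\Phi(-1)\Phi(\nu)$, and the block rotation inside the joint eigenspaces costs nothing. The original proof in \cite{KBM} leans on the structure theory of multiplicative homomorphisms into $GL_n(\C)$ (cf.\ the cited paper of Omladi\v c, Radjavi and \v Semrl), whereas your argument extracts commutativity directly from the J.T.P.\ identity by elementary matrix analysis; that makes it more self-contained, at the cost of being tied to the Hermitian/positive-definite setting. One cosmetic remark: the lemma as stated maps into $\s_n(\C)$ while the conclusion lists only $\varphi_1\oplus\varphi_2$; your argument is written for $n=2$ but the commuting-family step generalizes verbatim to $n$ diagonal entries.
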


\begin{Proposition}[Theorem 4.2 in \cite{KBM}] \label{C_to_all}
Let a mapping $\Phi: \A \to \s_n(\C)$ be a J.T.P. homomorphism where $\A$ is the set $\C$, $\RR$, or $\RR^+\cup\{0\}$.
Then there exist a unitary matrix $U$, a diagonal matrix $D$ with $\pm 1$'s on its diagonal and 
multiplicative maps $\varphi_1, \varphi_2: \A \to \RR$, such that
$$\Phi(\lambda) = U D(\varphi_1(\lambda) \oplus \varphi_2(\lambda)) U^*, \ \ \lambda \in \A.$$
\end{Proposition}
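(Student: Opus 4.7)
The plan is to analyse $E := \Phi(1)$ first. From $\Phi(1) = \Phi(1 \cdot 1 \cdot 1) = \Phi(1)^{3}$, the Hermitian matrix $E$ satisfies $E^{3}=E$, so its spectrum lies in $\{-1,0,1\}$. Spectrally decomposing and conjugating $\Phi$ by the corresponding unitary, we may assume $E$ is already diagonal with entries $e_{1},\dots,e_{n}\in\{-1,0,1\}$. The identity $\Phi(\lambda) = \Phi(1\cdot\lambda\cdot 1) = E\Phi(\lambda)E$ then forces $\Phi(\lambda)_{ij} = e_{i}e_{j}\Phi(\lambda)_{ij}$, which kills all entries indexed by a zero of $E$ and forces block diagonality with respect to the splitting of the nonzero part into $+1$- and $-1$-eigenspaces of $E$. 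Hence $\Phi(\lambda) = A(\lambda) \oplus B(\lambda) \oplus 0$, where $A$ and $B$ are themselves J.T.P.\ homomorphisms and $A(1)$, $-B(1)$ are identity matrices on their respective blocks; replacing $B$ by $-B$ reduces both to the case of identity image at $1$.

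Next I would show that within each invertible block $\Phi(\lambda)$ is nonsingular for every $\lambda \in \A^{*}$. If $A(\lambda_{0})v = 0$ for some $\lambda_{0}\in\A^{*}$ and nonzero $v$, then since $\A^{*}$ (being $\C^{*}$, $\RR^{*}$, or $\RR^{+}$) is a multiplicative group, every $\nu \in \A^{*}$ can be written as $\lambda_{0}\mu\lambda_{0}$ with $\mu = \nu\lambda_{0}^{-2}\in\A^{*}$, giving $A(\nu)v = A(\lambda_{0})A(\mu)A(\lambda_{0})v = 0$; taking $\nu = 1$ yields $v=0$, a contradiction. Thus $A|_{\A^{*}}$ takes values in invertible Hermitian matrices, and likewise for $-B$. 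Applying Lemma \ref{C_to_invertible} to each restriction provides a common unitary diagonalising basis and multiplicative maps $\varphi_{i}:\A^{*}\to\RR^{*}$ such that $\Phi(\lambda)$ is simultaneously diagonal in this basis for all $\lambda\in\A^{*}$, with the signs coming from the splitting of $E$ recorded in a single diagonal matrix $D$ with $\pm 1$ entries.

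It remains to treat $\lambda = 0$. Extend each $\varphi_{i}$ to $\A$ by $\varphi_{i}(0) := 0$, which preserves multiplicativity; we must check that $\Phi(0)$ agrees with the resulting formula, i.e.\ that $\Phi(0) = 0$ in the chosen basis (or matches after a harmless reassignment of $\varphi_{i}(0)$). Using $\Phi(0) = \Phi(\lambda)\Phi(0)\Phi(\lambda)$ for every $\lambda\in\A$, the diagonalising basis transforms this into $[\Phi(0)]_{ij} = \varphi_{i}(\lambda)\varphi_{j}(\lambda)[\Phi(0)]_{ij}$ for every $\lambda\in\A^{*}$. If some $\lambda$ makes the factor $\varphi_{i}(\lambda)\varphi_{j}(\lambda) \ne 1$, the entry vanishes, and this accounts for all $i,j$ except in a degenerate block where $\varphi_{i}\equiv\pm 1$ throughout.

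The main obstacle I expect is precisely this degenerate case: when the multiplicative maps on some block are $\pm 1$-valued the identity no longer forces $[\Phi(0)]_{ij}$ to vanish, and one must argue by direct inspection that any admissible $\Phi(0)$ on such a block is still a Hermitian matrix with eigenvalues in $\{-1,0,1\}$ that can be absorbed into the diagonal matrix $D$ together with the extension $\varphi_{i}(0)\in\{0,1\}$, yielding the stated form. A secondary but more bookkeeping-heavy point is stitching the unitary bases obtained separately on each invertible block (and on the $0$-block) into one unitary $U \in \mm_{n}(\C)$ and consolidating all sign choices into the single diagonal matrix $D$ that appears in the statement.
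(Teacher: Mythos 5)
This proposition is imported verbatim from Theorem 4.2 of \cite{KBM}; the present paper gives no proof of it, so there is no in-paper argument to compare against line by line. That said, your route --- peel off the spectral blocks of $\Phi(1)$ using $\Phi(\lambda)=\Phi(1)\Phi(\lambda)\Phi(1)$, show each nonzero block is invertible-valued on $\A^*$, and invoke Lemma \ref{C_to_invertible} --- is exactly the reduction that the preceding lemma is set up to feed, and those steps are correct. The one genuine soft spot is the treatment of $\Phi(0)$. A small slip: in the diagonalised basis the relation $\Phi(0)=\Phi(\lambda)\Phi(0)\Phi(\lambda)$ multiplies the $(i,j)$ entry by $d_id_j\varphi_i(\lambda)\varphi_j(\lambda)$, not by $\varphi_i(\lambda)\varphi_j(\lambda)$. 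More substantively, the residual ``degenerate'' pairs are not only those where the $\varphi_i$ are $\pm1$-valued, but all pairs with $d_id_j\varphi_i\varphi_j\equiv 1$ on $\A^*$ --- for instance $\varphi_1(\lambda)=\lambda$, $\varphi_2(\lambda)=\lambda^{-1}$ on $\RR^*$, where your identity leaves the $(1,2)$ entry of $\Phi(0)$ completely unconstrained even though neither map is $\pm1$-valued. To dispose of these you need the third consequence of $0=0\cdot\mu\cdot 0$, namely $\Phi(0)=\Phi(0)\Phi(\mu)\Phi(0)$, which in the reciprocal example forces the off-diagonal entry to vanish; combined with $\Phi(0)=\Phi(0)^3$ and, in the truly constant case, the freedom to rechoose $U$ on any block where $\Phi|_{\A^*}$ is scalar, this closes the argument. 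So the proposal is the right proof in outline, but the degenerate case you flag cannot be finished with only the two relations you list.
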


We finish this section with a characterization of $2 \times 2$ Hermitian involutions $A$, that is, $A^2=I$.

\begin{Lemma} \label{involution}
Let $A$ be a Hermitian matrix. Then $A$ is an involution if and only if
$A=\pm I$ or
$$A=\begin{bmatrix} \pm \sqrt{1-|a|^2} & a \\ \bar{a} & \mp \sqrt{1-|a|^2} \end{bmatrix}$$
for some $a\in \C$ with $|a| \leq 1$.
\end{Lemma}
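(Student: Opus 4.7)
The reverse direction is a direct squaring computation, so the interesting direction is forward. My plan is to parametrize a general Hermitian $2\times 2$ matrix and simply expand the involution equation $A^2=I$ into its matrix entries.

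Concretely, I would write
\[
A=\begin{bmatrix} \alpha & b \\ \bar{b} & \gamma \end{bmatrix},\qquad \alpha,\gamma\in\RR,\ b\in\C,
\]
and compute $A^2$ entrywise. The equation $A^2=I$ then becomes the three scalar conditions
\[
\alpha^2+|b|^2=1,\qquad \gamma^2+|b|^2=1,\qquad b(\alpha+\gamma)=0.
\]
Subtracting the first two gives $\alpha^2=\gamma^2$, and the last condition forces either $b=0$ or $\gamma=-\alpha$.

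From here the case split is straightforward. If $b=0$, then $\alpha,\gamma\in\{\pm 1\}$; the choices $\alpha=\gamma$ give $A=\pm I$, while the choices $\alpha=-\gamma$ give the two diagonal matrices $\diag(\pm 1,\mp 1)$, which sit inside the claimed second family as the special value $a=0$. If $b\neq 0$, then $\gamma=-\alpha$ and $\alpha^2+|b|^2=1$, so $|b|\le 1$ and $\alpha=\pm\sqrt{1-|b|^2}$, with $\gamma=\mp\sqrt{1-|b|^2}$; renaming $b$ as $a$ yields exactly the displayed form.

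There is no real obstacle here — the lemma amounts to solving three real equations in three real unknowns — so the whole argument is essentially the bookkeeping above, together with a one-line check of the reverse direction by squaring the explicit matrix. The only thing to be slightly careful about is not forgetting that $\pm I$ and the diagonal sign-flip matrices are all involutions, and pointing out that the latter are absorbed into the parametric family.
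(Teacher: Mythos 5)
Your proof is correct and is exactly the ``direct calculation'' the paper invokes: parametrize the Hermitian matrix, expand $A^2=I$ into the three scalar equations, and split on whether the off-diagonal entry vanishes. Nothing further is needed.
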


\begin{proof}
Use a direct calculation.
\end{proof}

\section{Irregular cases}

In this section we start with the study J.T.P. homomorphisms that map from $2\times 2$ Hermitian matrices to $2\times 2$ Hermitian matrices. Since $\Phi(0)= \Phi(0^3)=\Phi(0)^3$, it must be that $\sigma(\Phi(0)) \subset \{-1,0,1\}$. So we consider several cases.

\medskip

\noindent
{\sc Case 1:} If $\Phi(0)$ is invertible, then it follows from
$$\Phi(0)=\Phi(0\cdot A \cdot 0)=\Phi(0) \Phi(A) \Phi(0)$$
that $\Phi(A)=\Phi(0)^{-1}=\Phi(0)$ for every $A \in \s_2(\C)$ with $\Phi(0)$ some involution in $\s_2(\C)$.

\medskip

\noindent
{\sc Case 2:} If $\rank \Phi(0)=1$, then it follows from $\Phi(0)=\Phi(0)^3$ that $\sigma(\Phi(0))=\{0,\alpha\}$ with $\alpha \in \{-1,1\}$. Hence we can write
$$\Phi(0)=U
	\begin{bmatrix}
		\alpha & 0 \\
		0 & 0 
	\end{bmatrix} U^*$$
for some unitary matrix $U\in \mm_2(\C)$. Choose an arbitrary $A \in \s_2(\C)$ and write
$$\Phi(A)= U
	\begin{bmatrix}
		a & b \\
		\bar{b} & c
	\end{bmatrix} U^*.$$
Then
\begin{align*}
	\Phi(0) &= \Phi(0) \Phi(A) \Phi(0) 
	= U \begin{bmatrix} \alpha & 0 \\ 0 & 0 \end{bmatrix} \begin{bmatrix} a & b \\ \bar{b} & c \end{bmatrix} \begin{bmatrix} \alpha & 0 \\ 0 & 0 \end{bmatrix} U^* 
	= U \begin{bmatrix} a & 0 \\ 0 & 0 \end{bmatrix} U^*.
\end{align*}
Hence $a=\alpha$.
On the other hand, 
\begin{align*}
\Phi(0)=\Phi(A) \Phi(0)\Phi(A)= U \begin{bmatrix} a & b \\ \bar{b} &c \end{bmatrix} \begin{bmatrix} \alpha & 0 \\ 0 & 0 \end{bmatrix} \begin{bmatrix} a & b \\ \bar{b} & c \end{bmatrix} U^* =
U \begin{bmatrix} \alpha & b \\ \bar{b} & \alpha |b|^2 \end{bmatrix} U^*,
\end{align*}
from which it follows $b=0$. We conclude that for every $A\in \s_2(\C)$
$$\Phi(A)= U \begin{bmatrix} \alpha & 0 \\ 0 & \varphi(A) \end{bmatrix} U^*$$
for some J.T.P. homomorphism $\varphi: \s_2(\C) \to \RR$ with $\varphi(0)=0$.

\medskip

We split the remaining case $\Phi(0)=0$ into several subcases, depending on the image $\Phi(I)$. Since $\Phi(I)=\Phi(I)^3$, it must be that $\sigma(\Phi(I)) \subset \{-1,0,1\}$.

\medskip

\noindent
{\sc Case 3:} Let $\Phi(I)=0$. Then $\Phi(A)=\Phi(I)\Phi(A)\Phi(I)=0$ for every $A\in \s_2(\C)$.

\medskip

\noindent
{\sc Case 4:} If $\rank \Phi(I)=1$, we write
$$\Phi(I)= U \begin{bmatrix} \alpha & 0 \\ 0 & 0 \end{bmatrix} U^*$$
for $\alpha \in \{-1,1\}$ and a unitary matrix $U\in \mm_2(\C)$. Write
$$\Phi(A)= U \begin{bmatrix} a & b \\ \bar{b} & c \end{bmatrix} U^*.$$
Then for every $A\in \s_2(\C)$ we get
\begin{align*}
\Phi(A) &= \Phi(I)\Phi(A) \Phi(I)= U \begin{bmatrix} \alpha & 0 \\ 0 & 0 \end{bmatrix} \begin{bmatrix} a & b \\ \bar{b} & c \end{bmatrix} \begin{bmatrix} \alpha & 0 \\ 0 & 0 \end{bmatrix} U^* \\
 &= U \begin{bmatrix} a & 0 \\ 0 & 0\end{bmatrix} U^* = U\begin{bmatrix} \varphi(A) & 0 \\ 0 & 0 \end{bmatrix} U^*
\end{align*}
for some J.T.P. homomorphism $\varphi: \s_2(\C) \to \RR$ with $\varphi(0)=0$ and $\varphi(I)=\alpha$.

\medskip

\noindent
{\sc Case 5:} Let $\Phi(I)$ be invertible. From $\Phi(I)=\Phi(I)^3$ it follows that $\Phi(I)^2=I$. Denote $P:=\Phi(I)$. Then $$\Phi(A)=\Phi(I)\Phi(A)\Phi(I)=P \Phi(A) P$$ for every $A\in\s_2(\C)$, hence $\Phi(A)P=P\Phi(A)$ for every $A\in\s_2(\C)$. 
If $P \ne \pm I$, we can write $P = U \begin{bmatrix} 1 & 0 \\ 0 & -1\end{bmatrix} U^*$ for some unitary $U \in \mm_2(\C)$. Since $\Phi(A)$ commutes with $P$, we have
$$\Phi(A) = U \begin{bmatrix} \varphi_1(A) & 0 \\ 0 & \varphi_2(A)\end{bmatrix} U^* $$
for some J.T.P. homomorpisms $\varphi_1, \varphi_2: \s_2(\C) \to \RR$.
If $P = -I$, define a mapping $\Phi'(A)=- \Phi(A)$. $\Phi'$ is a J.T.P. homomorphism from $\s_2(\C)$ to $\s_2(\C)$ with $\Phi'(0)=0$ and $\Phi'(I)=I$. This translates directly to the last case in need of considering, Case 6.

\medskip

\noindent
{\sc Case 6:} $\Phi:\s_2(\C) \to \s_2(\C)$ is a J.T.P. homomorphism with $\Phi(0)=0$ and $\Phi(I)=I$. 
We refer to this case as a {\it regular} case.

\medskip

Cases 1--6 amount to the following proposition.

\begin{Proposition} \label{thm3.1}
Let $\Phi: \s_2(\C) \to \s_2(\C)$ be a J.T.P. homomorphism. Then $\Phi$ is regular, or $-\Phi$ is regular, 
or there exist a unitary matrix $U$, such that
$$\Phi(A)=U \begin{bmatrix} \varphi_1(A) & 0 \\ 0 & \varphi_2(A)\end{bmatrix} U^*$$
where $\varphi_1, \varphi_2: \s_2(\C) \to \RR$ are J.T.P. homomorpisms characterised in Proposition \ref{thm3.3},
possibly constant mappings $\varphi_i(A) = c \in \{-1, 0, 1\}$ for all $A \in \s_2(\C)$.
\end{Proposition}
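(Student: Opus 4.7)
The plan is to organize the conclusions of Cases 1--6, already worked out in the preceding text, into the three alternatives of the statement. The driving observation is that $X^3 = X$ for $X = \Phi(0)$ and $X = \Phi(I)$, forcing both spectra to lie in $\{-1, 0, 1\}$; this produces the exhaustive case split used above.

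First I would dispatch the degenerate possibilities. Cases 1 and 2 (where $\Phi(0) \ne 0$) together with Cases 3 and 4 (where $\Phi(0) = 0$ but $\Phi(I)$ is not invertible) each conclude, after diagonalising $\Phi(0)$ or $\Phi(I)$ by some unitary $U$, with an expression
\[ \Phi(A) = U \begin{bmatrix} \varphi_1(A) & 0 \\ 0 & \varphi_2(A) \end{bmatrix} U^* \]
in which at least one of $\varphi_1, \varphi_2$ is the constant function taking a value in $\{-1, 0, 1\}$.

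Next I would handle Case 5, where $\Phi(I)$ is an invertible involution. If $\Phi(I) = I$ we are in the regular Case 6. If $\Phi(I) = -I$, the map $-\Phi$ is still a J.T.P.\ homomorphism (because $(-1)^3 = -1$) and it satisfies $(-\Phi)(I) = I$, so $-\Phi$ is regular. Otherwise $\Phi(I) = U \diag(1,-1) U^*$ is a genuine sign matrix, and the identity $\Phi(A)\Phi(I) = \Phi(I)\Phi(A)$ forces $\Phi(A)$ to be simultaneously diagonal in the basis $U$, yielding the diagonal form once more.

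Finally, in every occurrence of the diagonal form, reading off the $(i,i)$ entry of $\Phi(ABA) = \Phi(A)\Phi(B)\Phi(A)$ after conjugation by $U^*$ shows that each component $\varphi_i \colon \s_2(\C) \to \RR$ is itself a J.T.P.\ homomorphism, so Proposition \ref{thm3.3} supplies the claimed form (unless $\varphi_i$ is the allowed constant in $\{-1,0,1\}$). The main point of caution, which is really just bookkeeping, is the \emph{real}-valuedness of $\varphi_i$: this follows because $\Phi(A)$ is Hermitian and is shown to be diagonal in the basis $U$, so its diagonal entries are real. No genuine obstacle arises; the content of the proposition is entirely packaged in the six cases above.
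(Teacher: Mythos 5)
Your proposal is correct and follows essentially the same route as the paper: the paper's ``proof'' of this proposition is precisely the preceding case analysis (Cases 1--6) driven by $\Phi(0)^3=\Phi(0)$ and $\Phi(I)^3=\Phi(I)$, with Cases 1--4 yielding the diagonal form with at least one constant component, and Case 5 splitting into the regular case, the $-\Phi$ regular case, and the commutation-with-$U\diag(1,-1)U^*$ diagonal case. Your added remarks on reading off diagonal entries to see that each $\varphi_i$ is a real-valued J.T.P.\ homomorphism are accurate and consistent with what the paper leaves implicit.
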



\medskip

All cases when $\Phi(I) \ne \pm I$ or $\Phi(0) \ne 0$ are covered by the form (i) of Theorem \ref{main}. In the case when 
$-\Phi$ is regular, we get the negative sign in the forms (ii) - (iv) of Theorem \ref{main}.

\section{Nontrivial involution to a scalar}

In sections 5--7 we assume  $\Phi:\s_2(\C) \to \s_2(\C)$  to be a regular J.T.P. homomorphism, that is, $\Phi(0)=0$ and $\Phi(I)=I$. We now consider the image of a nontrivial involution
$ J = \begin{bmatrix} 1 & 0 \\ 0 & -1\end{bmatrix} .$
Since $J^2 = I$, it is mapped to an involution. So, $\Phi(J)$ is a matrix similar to $J$, or a scalar matrix $I$ or $-I$. 
In this section we assume that $\Phi(J) \in \{ -I,I \}$.

\begin{Lemma} \label{podobne_v_enake}
Let $\Phi:\s_2(\C) \to \s_2(\C)$ be a regular J.T.P. homomorphism and $\Phi(J) \in \{ -I,I \}.$ 
Then every nontrivial involution is mapped to $\pm I$. If matrices $A, B \in \s_2(\C)$ are similar, then $\Phi(A) = \Phi(B)$.
\end{Lemma}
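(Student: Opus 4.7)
The approach hinges on Lemma \ref{lemma21}, which factors every $A \in \s_2(\C)$ as $A = BDB$ with $B$ Hermitian unitary (in particular $B^2 = I$) and $D$ real diagonal. A universal auxiliary fact I will use throughout is that $\Phi(B)^2 = I$ for \emph{every} Hermitian involution $B$: the identity $B I B = I$ combined with J.T.P.\ and regularity yields $\Phi(B)\Phi(I)\Phi(B) = \Phi(I) = I$. I emphasise that this does not yet assert $\Phi(B) = \pm I$.

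First I prove that every nontrivial Hermitian involution $K$ maps into $\{\pm I\}$. Write $K = BDB$ via Lemma \ref{lemma21}; squaring gives $B D^2 B = I$, hence $D^2 = I$, so the diagonal entries of $D$ are $\pm 1$, and since $K \ne \pm I$ both signs must appear, forcing $D \in \{J, -J\}$. By J.T.P., $\Phi(K) = \Phi(B)\Phi(D)\Phi(B)$. If $D = J$, the hypothesis gives $\Phi(D) = \pm I$. If $D = -J$, I invoke the swap matrix $R = \begin{bmatrix} 0 & 1 \\ 1 & 0 \end{bmatrix}$, which is itself a Hermitian involution satisfying $R J R = -J$; then $\Phi(-J) = \Phi(R)\Phi(J)\Phi(R) = \pm\, \Phi(R)^2 = \pm I$ by the auxiliary fact. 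Either way $\Phi(D) = \pm I$, so $\Phi(K) = \Phi(B)(\pm I)\Phi(B) = \pm\, \Phi(B)^2 = \pm I$. The only real subtlety here is the $D = -J$ branch, and I regard this swap-matrix identity as the main obstacle; it turns out to be mild.

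For the similarity statement, let $A, A' \in \s_2(\C)$ be similar. Being Hermitian they share a common real spectrum $\{\lambda_1, \lambda_2\}$, so Lemma \ref{lemma21} supplies decompositions $A = B_1 D_1 B_1$ and $A' = B_2 D_2 B_2$ with each $D_i$ diagonal carrying the eigenvalues $\lambda_1, \lambda_2$ (possibly in swapped order) and each $B_i$ Hermitian unitary. Whenever the decomposition admits the choice $B_i = I$ (which happens precisely when $A$ or $A'$ is already diagonal) I take it; then each $B_i$ is either $I$ or a nontrivial involution, so $\Phi(B_i) \in \{\pm I\}$ by the first part. Moreover, $D_2$ equals either $D_1$ or $R D_1 R$, and in the latter case applying the first part to the nontrivial involution $R$ yields $\Phi(D_2) = \Phi(R)\Phi(D_1)\Phi(R) = \Phi(D_1)$. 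Combining,
\[
 \Phi(A) = \Phi(B_1)\Phi(D_1)\Phi(B_1) = \Phi(D_1) = \Phi(D_2) = \Phi(B_2)\Phi(D_2)\Phi(B_2) = \Phi(A'),
\]
which completes the proof.
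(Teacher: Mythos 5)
Your proof is correct and follows essentially the same route as the paper's: decompose via Lemma \ref{lemma21}, use $\Phi(J)=\pm I$ together with the observation that every Hermitian involution $B$ satisfies $\Phi(B)^2=\Phi(B)\Phi(I)\Phi(B)=I$. You are in fact slightly more careful than the paper, which silently takes the diagonal factor of a nontrivial involution to be $J$ itself and assumes the two similar matrices admit decompositions with the \emph{same} diagonal matrix; your treatment of the $D=-J$ branch and of the swapped eigenvalue ordering via the involution $R$ closes those small gaps.
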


\begin{proof}
Let $A$ be a nontrivial involution. By Lemma \ref{lemma21} it can be written as $A = BJB$ with
$B^2 = I$. Thus
$\Phi(A) = \Phi(B)\Phi(J)\Phi(B) = \pm \Phi(B)^2 = \pm I.$
If matrices $A, B \in \s_2(\C)$ are similar, then again by Lemma \ref{lemma21} there exist involutions $C$, $E$ and diagonal matrix $D$ such that $A = CDC$ and $B = EDE$. Now $\Phi(A) = \Phi(D) = \Phi(B)$.
\end{proof}

\begin{Lemma} \label{rang_1_v_0}
If $\Phi:\s_2(\C) \to \s_2(\C)$ is a regular J.T.P. homomorphism and $\Phi(J) \in \{ -I,I \}$, then 
$\Phi(A)=0$ for every matrix $A\in \s_2(\C)$ with $\rank A=1$.
\end{Lemma}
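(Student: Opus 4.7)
The plan is to reduce the statement to the single identity $\Phi(D_1)=0$, where $D_1:=\diag(1,0)$, and then bootstrap to every rank-one matrix in $\s_2(\C)$.

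For the reduction, I would use Lemma \ref{lemma21} to write an arbitrary rank-one $A\in\s_2(\C)$ as $A = B D_\lambda B$ with $B$ a Hermitian unitary involution and some $\lambda\in\RR\setminus\{0\}$, where $D_\lambda:=\diag(\lambda,0)$. Since the hypothesis $\Phi(J)\in\{-I,I\}$ makes Lemma \ref{podobne_v_enake} applicable, similar matrices have the same $\Phi$-image, so $\Phi(A)=\Phi(D_\lambda)$. A direct multiplication yields $D_1 D_\lambda D_1 = D_\lambda$, whence $\Phi(D_\lambda)=\Phi(D_1)\Phi(D_\lambda)\Phi(D_1)$; the desired conclusion $\Phi(A)=0$ then drops out as soon as $\Phi(D_1)=0$.

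To prove $\Phi(D_1)=0$, I would combine two J.T.P. identities. From $D_1\cdot I\cdot D_1=D_1$ we obtain $\Phi(D_1)^2=\Phi(D_1)$, so $\Phi(D_1)$ is a Hermitian idempotent. On the other hand, the complementary projection $D_1^c:=\diag(0,1)$ is similar to $D_1$ (via conjugation by the swap involution $\bigl[\begin{smallmatrix}0&1\\1&0\end{smallmatrix}\bigr]$), so Lemma \ref{podobne_v_enake} yields $\Phi(D_1^c)=\Phi(D_1)$; together with $D_1\cdot D_1^c\cdot D_1=0$ this forces $\Phi(D_1)^3=\Phi(0)=0$. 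But any idempotent satisfies $\Phi(D_1)^3=\Phi(D_1)$, so $\Phi(D_1)=0$.

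I do not anticipate a real obstacle once Lemma \ref{podobne_v_enake} has unlocked similarity-invariance of $\Phi$: the argument is simply the packaging of two elementary matrix identities. The only subtle point is the observation that the orthogonal projections $D_1$ and $D_1^c$ are nonetheless similar as matrices, which is precisely what causes their $\Phi$-images to coincide and ultimately forces the nilpotence $\Phi(D_1)^3=0$.
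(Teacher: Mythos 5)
Your argument is correct and rests on exactly the same key facts as the paper's proof: the similarity invariance supplied by Lemma \ref{podobne_v_enake}, the similarity of the two complementary projections $\diag(1,0)$ and $\diag(0,1)$ via the swap involution, and the vanishing of their Jordan triple product. The paper merely packages this as the single chain $\Phi(\diag(a,0))=\Phi(\diag(0,c))$ specialized at $c=0$, whereas you isolate $\Phi(\diag(1,0))=0$ first via the idempotent--nilpotent observation and then propagate; the content is the same.
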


\begin{proof}
First notice that 
$$ \Phi\left(\begin{bmatrix} 1 & 0 \\ 0 & 0\end{bmatrix}\right) = \Phi\left(\begin{bmatrix} 0 & 1 \\ 1 & 0\end{bmatrix}
\begin{bmatrix} 0 & 0 \\ 0 & 1\end{bmatrix}\begin{bmatrix} 0 & 1 \\ 1 & 0\end{bmatrix}\right) =
\Phi\left(\begin{bmatrix} 0 & 0 \\ 0 & 1\end{bmatrix}\right) $$
since matrix $\begin{bmatrix} 0 & 1 \\ 1 & 0\end{bmatrix}$ is an involution.
Now
\begin{align*}
\Phi\left(\begin{bmatrix} a & 0 \\ 0 & 0\end{bmatrix}\right) &= \Phi\left(\begin{bmatrix} 1 & 0 \\ 0 & 0\end{bmatrix}
\begin{bmatrix} a & 0 \\ 0 & c\end{bmatrix}\begin{bmatrix} 1 & 0 \\ 0 & 0\end{bmatrix}\right) \\ &= 
\Phi\left(\begin{bmatrix} 0 & 0 \\ 0 & 1\end{bmatrix}
\begin{bmatrix} a & 0 \\ 0 & c\end{bmatrix}\begin{bmatrix} 0 & 0 \\ 0 & 1\end{bmatrix}\right) =
\Phi\left(\begin{bmatrix} 0 & 0 \\ 0 & c\end{bmatrix}\right)
\end{align*}
for any $a, c \in \RR$. Taking $c= 0$, we obtain that $\Phi\left(\begin{bmatrix} a & 0 \\ 0 & 0\end{bmatrix}\right) = 0$.
Since every matrix $A\in \s_2(\C)$ with $\rank A=1$ is similar to some matrix $\begin{bmatrix} a & 0 \\ 0 & 0\end{bmatrix}$, we get
$\Phi(A)=0$.
\end{proof}

\begin{Lemma} 
Let $\Phi:\s_2(\C) \to \s_2(\C)$ be a regular J.T.P. homomorphism with $\Phi(J) \in \{ -I,I \}$. Let $A\in \s_2(\C)$ be invertible. If $A$ is positive definite or $A$ is nondefinite, then $$\Phi(A)=
\Phi\left(\begin{bmatrix} \det A & 0 \\ 0 & 1\end{bmatrix}\right).$$ If $A$ is negative definite, then $$\Phi(A)=
\Phi(-I)\Phi\left(\begin{bmatrix} \det A & 0 \\ 0 & 1\end{bmatrix}\right).$$
\end{Lemma}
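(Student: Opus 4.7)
My plan is to use Lemma \ref{podobne_v_enake} to reduce to diagonal matrices and then exploit two J.T.P.\ decompositions. Set $X_\lambda := \Phi(\diag(\lambda, 1))$ for $\lambda \in \RR^*$. Since $\diag(\lambda, 1)$ and $\diag(1, \lambda)$ are unitarily similar via the swap involution, Lemma \ref{podobne_v_enake} gives $\Phi(\diag(1, \lambda)) = X_\lambda$ as well; moreover, the same lemma implies $\Phi(A) = \Phi(\diag(\lambda_1, \lambda_2))$ where $\lambda_1, \lambda_2$ are the eigenvalues of $A$, so it suffices to compute $\Phi(\diag(\lambda_1, \lambda_2))$.

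Suppose $A$ is positive definite or nondefinite; write its diagonal form as $\diag(a, \mu)$ with $a > 0$ and $\mu \in \RR^*$. The two J.T.P.\ identities
\begin{align*}
\diag(\sqrt a, 1)\,\diag(1, \mu)\,\diag(\sqrt a, 1) &= \diag(a, \mu),\\
\diag(\sqrt a, 1)\,\diag(\mu, 1)\,\diag(\sqrt a, 1) &= \diag(a\mu, 1),
\end{align*}
upon applying $\Phi$, give $\Phi(\diag(a, \mu)) = X_{\sqrt a}\,X_\mu\,X_{\sqrt a} = X_{a\mu} = \Phi(\diag(\det A, 1))$, as required.

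Suppose $A$ is negative definite; write its diagonal form as $\diag(-a, -b)$ with $a, b > 0$. The identity $\diag(-a, -b) = \diag(\sqrt a, \sqrt b)\,(-I)\,\diag(\sqrt a, \sqrt b)$, combined with the positive definite case applied to $\diag(\sqrt a, \sqrt b)$, yields $\Phi(A) = X_{\sqrt{ab}}\,\Phi(-I)\,X_{\sqrt{ab}}$. The main obstacle here is to show that $\Phi(-I)$ commutes with the image of $\Phi$. First, $\Phi(-I)$ must be invertible: if $\rank \Phi(-I) \le 1$, the identity $\Phi(B) = \Phi(-I)\,\Phi(B)\,\Phi(-I)$ would force $\rank \Phi(B) \le 1$ for every $B$, contradicting $\Phi(I) = I$. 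Invertibility together with $\Phi(-I)^3 = \Phi(-I)$ forces $\Phi(-I)^2 = I$; multiplying $\Phi(-I)\,\Phi(B)\,\Phi(-I) = \Phi(B)$ on the right by $\Phi(-I)$ then yields $\Phi(-I)\,\Phi(B) = \Phi(B)\,\Phi(-I)$ for every $B$. Consequently
\[
\Phi(A) = \Phi(-I)\,X_{\sqrt{ab}}^2 = \Phi(-I)\,X_{ab} = \Phi(-I)\,\Phi(\diag(\det A, 1)).
\]
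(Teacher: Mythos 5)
Your proposal is correct and follows essentially the same route as the paper: reduce to diagonal form via similarity invariance, use the swap involution to identify $\Phi(\diag(1,\mu))$ with $\Phi(\diag(\mu,1))$, telescope the two J.T.P.\ decompositions to get $\Phi(\diag(a,\mu))=\Phi(\diag(a\mu,1))$, and handle the negative definite case by sandwiching $-I$ and commuting $\Phi(-I)$ past the image. The only cosmetic difference is your justification that $\Phi(-I)$ is an involution (rank argument plus $\Phi(-I)^3=\Phi(-I)$), where the paper gets it directly from $\Phi(-I)^2=\Phi((-I)I(-I))=\Phi(I)=I$.
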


\begin{proof}
First notice that 
$$\Phi\left(\begin{bmatrix} a & 0 \\ 0 & 1\end{bmatrix}\right) = \Phi\left(\begin{bmatrix} 0 & 1 \\ 1 & 0\end{bmatrix}
\begin{bmatrix} 1 & 0 \\ 0 & a\end{bmatrix}\begin{bmatrix} 0 & 1 \\ 1 & 0\end{bmatrix}\right) =
\Phi\left(\begin{bmatrix} 1 & 0 \\ 0 & a\end{bmatrix}\right) .$$
Now, if $A$ is positive definite, it is similar to some matrix $\begin{bmatrix} a & 0 \\ 0 & b\end{bmatrix}$ with $a, b >0$.
If $A$ is nondefinite, it is similar to some matrix $\begin{bmatrix} a & 0 \\ 0 & b\end{bmatrix}$ with $a>0$ and $b<0$.
In both cases we get
\begin{align*}
\Phi(A) &= \Phi\left(\begin{bmatrix} a & 0 \\ 0 & b\end{bmatrix}\right) = \Phi\left(\begin{bmatrix} \sqrt{a} & 0 \\ 0 & 1\end{bmatrix}
\begin{bmatrix} 1 & 0 \\ 0 & b\end{bmatrix}\begin{bmatrix} \sqrt{a} & 0 \\ 0 & 1\end{bmatrix}\right) \\
& = \Phi\left(\begin{bmatrix} \sqrt{a} & 0 \\ 0 & 1\end{bmatrix}
\begin{bmatrix} b & 0 \\ 0 & 1\end{bmatrix}\begin{bmatrix} \sqrt{a} & 0 \\ 0 & 1\end{bmatrix}\right) = 
\Phi\left(\begin{bmatrix} ab & 0 \\ 0 & 1\end{bmatrix}\right) = \Phi\left(\begin{bmatrix} \det A & 0 \\ 0 & 1\end{bmatrix}\right) . \end{align*}
Next notice that $\Phi(-I)$ is an involution which commutes with any $\Phi(A)$ since
$$ \Phi(A) = \Phi((-I)A(-I)) = \Phi(-I)\Phi(A)\Phi(-I) ,$$
and by multiplying this equation by $\Phi(-I)$ we obtain $\Phi(A)\Phi(-I) = \Phi(-I)\Phi(A)$.
If $A$ is negative definite, it is similar to some matrix $\begin{bmatrix} a & 0 \\ 0 & b\end{bmatrix}$ with $a, b <0$. So 
\begin{align*}
 \Phi(A) &= \Phi\left(\begin{bmatrix} a & 0 \\ 0 & b\end{bmatrix}\right) = 
\Phi\left(\begin{bmatrix} \sqrt{-a} & 0 \\ 0 & \sqrt{-b}\end{bmatrix}
(-I)\begin{bmatrix} \sqrt{-a} & 0 \\ 0 & \sqrt{-b}\end{bmatrix}\right) \\ 
 &= \Phi\left(\begin{bmatrix} \sqrt{ab} & 0 \\ 0 & 1\end{bmatrix}\right) \Phi(-I)
\Phi\left(\begin{bmatrix} \sqrt{ab} & 0 \\ 0 & 1\end{bmatrix}\right) = 
\Phi(-I)\Phi\left(\begin{bmatrix} \det A & 0 \\ 0 & 1\end{bmatrix}\right) 
\end{align*}
and the proof is complete.
\end{proof}

\begin{Proposition}  \label{thm4.4}
Let $\Phi:\s_2(\C) \to \s_2(\C)$ be a regular J.T.P. homomorphism with 
$\Phi(J) \in \{ -I,I \}$.
Then there exist a unitary matrix $U$, 
unital multiplicative maps $\psi_1, \psi_2: [0,\infty) \to [0,\infty)$ with $\psi_i(0)=0$ for $i\in \{1,2\}$, and maps $\eta_1, \eta_2 :\{0, 1, 2\} \to \{-1,1\}$ which satisfy
$\eta_1(2) = \eta_2(2) = 1$ and $\eta_1(1) = \eta_2(1)$, so that $\Phi(A)$ has the form
$$\Phi(A) = U \left[
\begin{array}{cc}
\psi_1(|\det A|) \eta_1(\Syl(A)) & 0 \\
0 & \psi_2(|\det A|) \eta_2(\Syl(A)) 
\end{array} \right] U^*,$$
for every $A \in \s_2(\C)$, where $\Syl(A)$ is the inertia of $A$.
\end{Proposition}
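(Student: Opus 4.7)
The plan is to reduce everything to the structure of a single one-parameter family, namely $f(t):=\Phi(\diag(t,1))$. The identity $\diag(a,1)\diag(b,1)\diag(a,1)=\diag(a^2b,1)$ shows that $f:\RR\to\s_2(\C)$ is itself a J.T.P. homomorphism; it satisfies $f(1)=\Phi(I)=I$ by regularity and $f(0)=0$ by Lemma~\ref{rang_1_v_0}. Proposition~\ref{C_to_all} then supplies a unitary $U$, a $\pm 1$-diagonal $D$, and multiplicative $\varphi_1,\varphi_2:\RR\to\RR$ with $f(\lambda)=UD(\varphi_1(\lambda)\oplus\varphi_2(\lambda))U^*$; evaluating at $\lambda=1$ forces $D=I$ and $\varphi_i(1)=1$. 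Since $\varphi_i(t)=\varphi_i(\sqrt t)^2\ge 0$ for $t\ge 0$, the restrictions $\psi_i:=\varphi_i|_{[0,\infty)}$ land in $[0,\infty)$, are multiplicative, and satisfy $\psi_i(0)=0$, $\psi_i(1)=1$. Moreover $\diag(-1,1)$ is a nontrivial involution, so by Lemma~\ref{podobne_v_enake} $f(-1)=\pm I$; hence $\varphi_1(-1)=\varphi_2(-1)\in\{-1,1\}$, and I take $\eta_1(1)=\eta_2(1)$ to be this common value.

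The preceding (unlabelled) lemma now does most of the remaining work. For positive definite $A$, where $\Syl(A)=2$ and $\det A>0$, it yields $\Phi(A)=f(\det A)=U(\psi_1(|\det A|)\oplus\psi_2(|\det A|))U^*$, forcing $\eta_i(2)=1$. For nondefinite $A$, where $\Syl(A)=1$ and $\det A<0$, the factorisation $\varphi_i(\det A)=\varphi_i(-1)\,\psi_i(|\det A|)$ puts $\Phi(A)$ into the required form with the common sign $\eta_i(1)$. The cases $A=0$ and $\rank A=1$ follow automatically, since $\det A=0$ kills the formula via $\psi_i(0)=0$, matching $\Phi(0)=0$ and Lemma~\ref{rang_1_v_0} independently of the values $\eta_i(0),\eta_i(1)$.

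The remaining case, $A<0$, is the one I expect to be the main obstacle. Here the preceding lemma gives $\Phi(A)=\Phi(-I)\,f(\det A)$, so I must pin down $\Phi(-I)$. It is an involution, and the identity $\Phi(B)=\Phi((-I)B(-I))=\Phi(-I)\Phi(B)\Phi(-I)$ combined with $\Phi(-I)^2=I$ shows that $\Phi(-I)$ commutes with every element of the image, in particular with every $f(t)=U(\varphi_1(t)\oplus\varphi_2(t))U^*$. If $\varphi_1\not\equiv\varphi_2$, then some $f(t)$ has distinct eigenvalues and $\Phi(-I)$ must be diagonal in the basis $U$, giving $\Phi(-I)=U(\mu_1\oplus\mu_2)U^*$ with $\mu_i\in\{-1,1\}$. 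If $\varphi_1\equiv\varphi_2$, then $f$ is scalar-valued, the unitary from Proposition~\ref{C_to_all} is not canonical, and I would re-choose $U$ so as to diagonalise $\Phi(-I)$ in the new basis; this adjustment does not disturb the formulas already obtained, because a scalar family is basis-invariant. Setting $\eta_i(0):=\mu_i$ then yields the claimed form for $A<0$ and completes the characterisation. The delicate point is precisely this harmonisation of $U$ between the regular and negative-definite branches in the scalar subcase.
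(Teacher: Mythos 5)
Your proposal is correct and follows essentially the same route as the paper: reduce to the one-parameter family $\Phi(\diag(t,1))$ via the preceding lemma, apply Proposition~\ref{C_to_all}, read off the signs $\eta_i(2)=1$ and $\eta_1(1)=\eta_2(1)$ from $\Phi(I)=I$ and the scalar image of a nontrivial involution, and settle the negative-definite branch by noting that the involution $\Phi(-I)$ commutes with the whole image (forcing it diagonal in the basis $U$ when $\psi_1\ne\psi_2$, and re-choosing $U$ in the scalar subcase). The only cosmetic difference is that you invoke $\Phi(A)=\Phi(-I)\Phi(\diag(\det A,1))$ directly where the paper re-derives it from $A=\sqrt{-A}\,(-I)\sqrt{-A}$; the content is identical.
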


\begin{proof}
Consider all matrices of the form $\begin{bmatrix} x & 0 \\ 0 & 1\end{bmatrix} \in \s_2(\C)$. They are isomorphic to the semigroup
of real numbers for multiplication, so $\Phi$ induces a J.T.P. homomorphism from $\RR$ to $\s_2(\C)$. From Proposition \ref{C_to_all} we know its form
and by previous Lemma it follows that
there exist a unitary matrix $U$, a diagonal matrix $D$ with $\pm 1$'s on its diagonal and 
multiplicative maps $\varphi_1, \varphi_2: \RR \to \RR$, such that
$$\Phi(A) = U D\left[
\begin{array}{cc}
\varphi_1(\det A) & 0 \\
0 & \varphi_2(\det A) 
\end{array} \right] U^*$$
for every positive definite or nondefinite matrix $A \in \s_2(\C)$. 
This can be written in the form
$$\Phi(A) = U \left[
\begin{array}{cc}
\psi_1(|\det A|) \eta_1(\Syl(A)) & 0 \\
0 & \psi_2(|\det A|) \eta_2(\Syl(A)) 
\end{array} \right] U^*,$$
where $\psi_1, \psi_2: [0,\infty) \to [0,\infty)$ are multiplicative maps, and $\Syl(A)$ is the inertia of $A$.
Since $\Phi(I) = I$, we obtain $\eta_1(2) = \eta_2(2) = 1$, and since $\Phi$ maps a nontrivial involution to a scalar, we obtain
$\eta_1(1) = \eta_2(1)$. 

We now have to prove this form also for negative definite matrices.
If $\psi_1(x) = \psi_2(x)$ for every $x \ge 0$, then $\Phi(A)$ is scalar for every positive definite or 
nondefinite matrix $A \in \s_2(\C)$. In this case matrix $U$ is still arbitrary. There exists a unitary matrix $U$ and a diagonal matrix 
$D$ with $\pm 1$'s on its diagonal, so that $\Phi(-I) = U D U^*$.

On the other hand, if $\psi_1(x) \ne \psi_2(x)$ for some $x \ge 0$, then $\Phi(-I)$ commutes with $\Phi \left(\begin{bmatrix} x & 0 \\ 0 & 1\end{bmatrix}\right)$
by previous Lemma and again $\Phi(-I) = U D U^*$. Now let $\eta_1(0)$ and $\eta_2(0)$ be defined by diagonal entries of matrix $D$. Every negative definite matrix $A \in \s_2(\C)$ can be written in the form
$A = \sqrt{-A}(-I)\sqrt{-A}$, so
\begin{align*}
\Phi(A) & = \Phi(\sqrt{-A})\Phi(-I)\Phi(\sqrt{-A}) \\
& = U \left[
\begin{array}{cc}
\psi_1(\sqrt{\det A}) & 0 \\
0 & \psi_2(\sqrt{\det A}) 
\end{array} \right]\left[
\begin{array}{cc}
\eta_1(0) & 0 \\
0 & \eta_2(0)
\end{array} \right] \cdot \\ 
&\hspace{4cm}\cdot\left[
\begin{array}{cc}
\psi_1(\sqrt{\det A}) & 0 \\
0 & \psi_2(\sqrt{\det A}) 
\end{array} \right] U^* \\
 &= U \left[
\begin{array}{cc}
\psi_1(|\det A|) \eta_1(\Syl(A)) & 0 \\
0 & \psi_2(|\det A|) \eta_2(\Syl(A)) 
\end{array} \right] U^*,
\end{align*}
which completes the proof.
\end{proof}

\medskip

The case when a nontrivial idempotent is mapped to a scalar is covered by the form (i) of Theorem \ref{main}.

\section{Nondegenerate case}

In this section we assume that for a regular J.T.P. homomorphism $\Phi:\s_2(\C) \to \s_2(\C)$ there exists $A\in \s_2(\C)$ with $\rank A=1$ such that $\Phi(A) \neq 0$. We refer to such regular $\Phi$ as {\it nondegenerate} J.T.P. homomorphism.

From Lemmas \ref{podobne_v_enake} and \ref{rang_1_v_0} it follows
that nontrivial involutions cannot be mapped to scalar matrices. Thus
$$\sigma\left(\Phi \left(\begin{bmatrix} 0 & 1 \\ 1 & 0 \end{bmatrix}\right) \right)= \{-1,1\}.$$

First lemma shows that rank 1 matrices are mapped to rank 1 matrices.

\begin{Lemma} \label{rank1-v-rank1}
Let $\Phi: \s_2(\C) \to \s_2(\C)$ be a nondegenerate J.T.P. homomorphism. Then
$\rank \Phi(A)=1$ for every $A\in \s_2(\C)$ with $\rank A=1$.
\end{Lemma}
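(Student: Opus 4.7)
The plan is to rule out $\rank \Phi(A) \in \{0, 2\}$ when $\rank A = 1$. The central observation driving both cases is that every rank $1$ Hermitian matrix can be written $A = \lambda P$ with $\lambda \in \RR^*$ and $P = vv^*$ a rank $1$ orthogonal projection, and the factorisation $A = P(\lambda I)P$ gives $\Phi(A) = \Phi(P)\Phi(\lambda I)\Phi(P)$. Moreover, $PBP = \langle Bv, v\rangle P$ for every $B \in \s_2(\C)$, so $ABA$ traces out all real multiples of $P$ as $B$ varies.

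\emph{Ruling out rank $0$.} If $\Phi(A_0) = 0$ for some $A_0 = \lambda_0 P_0$ of rank $1$, then the formula above for $A_0 B A_0$ yields $\Phi(\mu P_0) = \Phi(A_0)\Phi(B)\Phi(A_0) = 0$ for every $\mu \in \RR$; in particular $\Phi(P_0) = 0$. A Householder-type reflection (after normalising phases so that $\langle v_0, v'\rangle \in \RR$), or equivalently Lemma \ref{involution}, produces for each rank $1$ projection $P' = v'v'^*$ a Hermitian unitary involution $W$ with $WP_0W = P'$. Then $\Phi(P') = \Phi(W)\Phi(P_0)\Phi(W) = 0$, and by the factorisation from the setup this forces $\Phi(A) = 0$ for every rank $1$ $A$, contradicting nondegeneracy.

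\emph{Ruling out rank $2$.} Suppose $\Phi(A_0)$ is invertible. Since $A_0 I A_0 = \lambda_0^2 P_0 = A_0 P_0 A_0$, the J.T.P. identity produces the equality $\Phi(A_0)^2 = \Phi(A_0)\Phi(P_0)\Phi(A_0)$, whence $\Phi(P_0) = I$ by invertibility of $\Phi(A_0)$. For the complementary projection $Q_0 = I - P_0$, the relation $P_0 Q_0 P_0 = 0$ yields $\Phi(Q_0) = \Phi(P_0)\Phi(Q_0)\Phi(P_0) = \Phi(0) = 0$. On the other hand, in any orthonormal basis diagonalising $P_0$ the coordinate swap is a Hermitian unitary involution $W$ satisfying $WP_0W = Q_0$, and $W^2 = I$ gives $\Phi(W)^2 = \Phi(WIW) = \Phi(I) = I$; consequently $\Phi(Q_0) = \Phi(W)\Phi(P_0)\Phi(W) = \Phi(W)^2 = I$, a contradiction.

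The only non-routine ingredient is the production of a Hermitian unitary conjugation: the $P_0 \leftrightarrow Q_0$ swap in the rank $2$ case is immediate, whereas moving one \emph{arbitrary} rank $1$ projection to another (needed in the rank $0$ case) requires the Householder construction or the classification in Lemma \ref{involution}. Everything else is a direct application of the J.T.P. identity.
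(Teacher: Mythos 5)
Your argument is correct, and it reaches the conclusion by a genuinely different route than the paper. The paper reduces everything to diagonal matrices via Lemma \ref{lemma21} and treats $\diag(b,0)$ directly: non-invertibility of its image follows from $\diag(b,0)\diag(0,a)\diag(b,0)=0$ together with $\Phi(\diag(0,a))\neq 0$ (where $\diag(0,a)$ is similar to the nondegeneracy witness), and nonvanishing follows from the identity $\Phi(\diag(a,0))^2=\Phi(\diag(a/b,0))\,\Phi(\diag(b,0))^2\,\Phi(\diag(a/b,0))$ plus the fact that a nonzero Hermitian matrix has nonzero square. You instead exploit the projection factorisation $A=\lambda P=P(\lambda I)P$ and $PBP=\langle Bv,v\rangle P$. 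Your rank-$0$ exclusion is essentially the contrapositive of the paper's nonvanishing step, transported from diagonal matrices to rank-one projections; the Householder reflection you invoke plays exactly the role of the Hermitian involutions supplied by Lemma \ref{lemma21}. The real divergence is your rank-$2$ exclusion: cancelling $\Phi(A_0)$ to get $\Phi(P_0)=I$ and then deriving both $\Phi(Q_0)=0$ (from $P_0Q_0P_0=0$ and $\Phi(0)=0$) and $\Phi(Q_0)=I$ (from the coordinate swap) uses only regularity, so you in fact prove the stronger statement that no regular J.T.P. homomorphism, degenerate or not, sends a rank-one matrix to an invertible one, whereas the paper's corresponding step already consumes the nondegeneracy hypothesis. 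The trade-off is that your version carries some extra bookkeeping (phase normalisation, the explicit reflection), while the paper's stays entirely among diagonal matrices.
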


\begin{proof}
By assumption there exists $A\in \s_2(\C)$ with $\rank A=1$, such that $\Phi(A) \neq 0$. Say that $\sigma(A)=\{0,a \}$ for some $a\in \RR^*$. Then $\Phi$ maps all matrices with such spectrum  to nonzero matrices.

Take an arbitrary $b\in \RR^*$. Then $$\begin{bmatrix} b & 0 \\ 0 & 0 \end{bmatrix} \begin{bmatrix} 0 & 0 \\ 0 & a \end{bmatrix} \begin{bmatrix} b & 0 \\ 0 & 0 \end{bmatrix} =0.$$
Since $\Phi \left( \begin{bmatrix} 0 & 0 \\ 0 & a \end{bmatrix} \right)$ is nonzero, $\Phi \left( \begin{bmatrix} b & 0 \\ 0 & 0 \end{bmatrix} \right)$ cannot be invertible. We need to show that it is nonzero.
Since
$$\Phi \left( \begin{bmatrix} a & 0 \\ 0 & 0 \end{bmatrix} \right)^2 = \Phi \left( \begin{bmatrix} \frac{a}{b} & 0 \\ 0 & 0 \end{bmatrix} \right) \Phi \left( \begin{bmatrix} b & 0 \\ 0 & 0 \end{bmatrix} \right)^2 \Phi \left( \begin{bmatrix} \frac{a}{b} & 0 \\ 0 & 0 \end{bmatrix} \right) \neq 0,$$
it follows that $\Phi \left( \begin{bmatrix} b & 0 \\ 0 & 0 \end{bmatrix} \right) \neq 0$.

Take $A\in \s_2(\C)$ an arbitrary matrix of rank 1. Then $A=B \begin{bmatrix} b & 0 \\ 0 & 0 \end{bmatrix} B$ for some $b\in \RR^*$ and some involution $B\in \s_2(\C)$. Hence, $\rank \Phi(A)= \rank \Phi \left( \begin{bmatrix} b & 0 \\ 0 & 0 \end{bmatrix} \right) =1$.
\end{proof}



\begin{Lemma} \label{lema5.1.5}
Let $\Phi:\s_2(\C) \to \s_2(\C)$ be a regular J.T.P. homomorphism. If there exists $\lambda \in\RR$ such that $\Phi(\lambda I)$ is not a scalar, then there exists a unitary matrix $U$ such that
$$\Phi(A)=U \begin{bmatrix} \varphi_1(A) & 0 \\ 0 & \varphi_2(A) \end{bmatrix} U^* \qquad \textrm{for all } A\in \s_2(\C),$$
where $\varphi_1,\varphi_2: \s_2(\C) \to \RR$ are distinct unital J.T.P. homomorphisms.
\end{Lemma}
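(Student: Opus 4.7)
The plan is to prove that, after a suitable unitary conjugation, $\Phi(A)$ is diagonal for every $A \in \s_2(\C)$; reading off the two diagonal entries will then furnish the maps $\varphi_1$ and $\varphi_2$ in the lemma. First I would restrict $\Phi$ to the scalar line $\RR I$ and apply Proposition \ref{C_to_all}: this supplies a unitary $W$, a sign matrix $D$, and multiplicative maps $\chi_1, \chi_2 : \RR \to \RR$ with $\Phi(\mu I) = W D (\chi_1(\mu) \oplus \chi_2(\mu)) W^*$. Regularity $\Phi(I) = I$ forces $D = I$ and $\chi_i(1) = 1$, so after replacing $\Phi$ by the still-regular map $A \mapsto W^* \Phi(A) W$ I may assume $W = I$, giving $D_\mu := \Phi(\mu I) = \diag(\chi_1(\mu), \chi_2(\mu))$. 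The hypothesis that $\Phi(\lambda I)$ is nonscalar translates to $\chi_1(\lambda) \ne \chi_2(\lambda)$, so $D_\lambda$ has distinct diagonal entries.

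The key step would be to show that $\Phi(B)$ is diagonal for every involution $B \in \s_2(\C)$. From $B(\mu I)B = \mu I$ one gets $D_\mu = \Phi(B) D_\mu \Phi(B)$, and since $\Phi(B)^2 = \Phi(B^2) = \Phi(I) = I$, multiplying once by $\Phi(B)$ yields $\Phi(B) D_\mu = D_\mu \Phi(B)$; as $D_\lambda$ has distinct eigenvalues, $\Phi(B)$ must be diagonal. Applied to $J = \diag(1,-1)$ this forces $\Phi(J) \in \{\pm I, \pm J\}$. The scalar case $\Phi(J) = \pm I$ is excluded by the standing nondegeneracy assumption together with Lemmas \ref{podobne_v_enake} and \ref{rang_1_v_0}, so $\Phi(J) = \pm J$. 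Then for every diagonal $\Lambda$ the identity $J \Lambda J = \Lambda$ gives $\Phi(\Lambda) = \Phi(J) \Phi(\Lambda) \Phi(J)$; together with $\Phi(J)^2 = I$ this forces $\Phi(J)$ and $\Phi(\Lambda)$ to commute, and because $\pm J$ has distinct eigenvalues, $\Phi(\Lambda)$ is also diagonal.

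To conclude, for an arbitrary $A \in \s_2(\C)$ Lemma \ref{lemma21} provides $A = B \Lambda B$ with $B$ an involution and $\Lambda$ diagonal, so $\Phi(A) = \Phi(B) \Phi(\Lambda) \Phi(B)$ is a product of three diagonal matrices and hence diagonal. Defining $\varphi_i(A)$ to be the $(i,i)$-entry of $\Phi(A)$ produces two real-valued maps $\s_2(\C) \to \RR$ that inherit the J.T.P. property from $\Phi$ (diagonal matrices multiply entrywise) and are unital since $\Phi(I) = I$; they are distinct because their restrictions to $\RR I$ already disagree. Conjugating back by $W$ then yields the stated form. I expect the main obstacle to be pinning down $\Phi(J)$ as a nontrivial diagonal involution: the scalar commutation narrows $\Phi(J)$ to $\{\pm I, \pm J\}$, and it is the nondegeneracy input of Section~6 that rules out $\pm I$ so that the subsequent commutativity with diagonal matrices actually diagonalizes $\Phi(\Lambda)$.
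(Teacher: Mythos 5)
Your argument is sound up to and including the step that every Hermitian involution has a diagonal image, but there is a genuine gap where you dispose of the case $\Phi(J)=\pm I$. The lemma assumes only that $\Phi$ is regular --- it does \emph{not} assume nondegeneracy --- and it is in fact invoked again for degenerate homomorphisms (the first lemma of the degenerate-case section cites Lemma \ref{lema5.1.5}), where by definition every rank-one matrix is sent to $0$ and so the exclusion via Lemmas \ref{podobne_v_enake} and \ref{rang_1_v_0} is unavailable. If $\Phi(J)=\pm I$, your next step collapses: commuting with a scalar matrix gives no information about $\Phi(\Lambda)$ for diagonal $\Lambda$, hence none about $\Phi(A)=\Phi(B)\Phi(\Lambda)\Phi(B)$. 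The omitted case is not vacuous a priori, so it must be handled; a clean repair is to observe that when $\Phi(J)=\pm I$ Proposition \ref{thm4.4} already delivers the desired diagonal form (with $\varphi_1\neq\varphi_2$ forced by the nonscalarity of $\Phi(\lambda I)$), so that your commutation argument is only needed when $\Phi(J)=\pm J$.

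For contrast, the paper's proof never looks at $\Phi(J)$. If some $\Phi(\lambda_0 I)$ with $\lambda_0>0$ is nonscalar, then $\Phi(\lambda_0 I)=\Phi(\sqrt{\lambda_0}\,I)^2=\diag(\alpha,\beta)$ with $\alpha,\beta$ nonnegative and distinct, and comparing the $(1,1)$ entries in the identity $\Phi(A)\diag(\alpha,\beta)^2\Phi(A)=\diag(\alpha,\beta)\Phi(A)^2\diag(\alpha,\beta)$ (coming from $A(\lambda_0^2 I)A=(\lambda_0 I)A^2(\lambda_0 I)$) kills the off-diagonal entry of $\Phi(A)$ for \emph{every} $A$ simultaneously; if instead all $\Phi(\lambda I)$ with $\lambda>0$ are scalar, the hypothesis forces $\Phi(-I)$ to be a nonscalar involution commuting with every $\Phi(A)$, which diagonalizes everything. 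Your involution-decomposition route is workable, but as written it does not cover $\Phi(J)=\pm I$ and therefore does not prove the lemma in the generality in which it is stated and used.
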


\begin{proof}
Suppose there exists $\lambda_0 >0$ such that $\Phi(\lambda_0 I)$ is not a scalar matrix. Then $\Phi(\lambda_0 I)= U \begin{bmatrix} \alpha & 0 \\ 0 & \beta \end{bmatrix} U^*$ for some unitary matrix $U$ and $\alpha \neq \beta$. Taking the similarity action if necessary, we may assume without the loss of generality that $\Phi(\lambda_0 I)=\begin{bmatrix} \alpha & 0 \\ 0 & \beta \end{bmatrix}$. Because $\Phi(\lambda_0 I)=\Phi( \sqrt{\lambda_0} I)^2$, $\Phi(\lambda_0 I)$ is a positive definite matrix, hence $\alpha, \beta >0$.
Choose an arbitrary $A\in \s_2(\C)$ with $\Phi(A)=\begin{bmatrix} a & b \\ \bar{b} & c \end{bmatrix}$. Then
\begin{align*}
\Phi(A \lambda_0^2 I A) &= \begin{bmatrix} a & b \\ \bar{b} & c \end{bmatrix} \begin{bmatrix} \alpha & 0 \\ 0 & \beta \end{bmatrix} \begin{bmatrix} \alpha & 0 \\ 0 & \beta \end{bmatrix} \begin{bmatrix} a & b \\ \bar{b} & c \end{bmatrix} \\
&= \begin{bmatrix} \alpha^2 a^2 + \beta^2 |b|^2 & \alpha^2 a b+ \beta^2 b c \\ \alpha^2 a \bar{b}+ \beta^2 \bar{b} c & \alpha^2 |b|^2+\beta^2 c \end{bmatrix} \\
= \Phi(\lambda_0 I A^2 \lambda_0 I) & = \begin{bmatrix} \alpha & 0 \\ 0 & \beta \end{bmatrix} \begin{bmatrix} a & b \\ \bar{b} & c \end{bmatrix} \begin{bmatrix} a & b \\ \bar{b} & c \end{bmatrix} \begin{bmatrix} \alpha & 0 \\ 0 & \beta \end{bmatrix} \\
&= \begin{bmatrix} \alpha^2 a^2+ \alpha^2 |b|^2 & \alpha \beta a b + \alpha \beta b c \\ \alpha \beta a \bar{b}+ \alpha \beta \bar{b} c & \beta^2 |b|^2 +\beta^2 c^2 \end{bmatrix}.
\end{align*}
Equating upper left entries, we get $\alpha^2 |b|^2= \beta^2 |b|^2$. Note that $\alpha \neq \beta$ and $\alpha, \beta >0$, hence $b=0$.

We conclude that $\Phi(A)$ is diagonal for every $A\in\s_2(\C)$. Thus there exist distinct J.T.P. homomorphisms $\varphi_1,\varphi_2: \s_2(\C) \to \RR$ such that
$$\Phi(A)=\begin{bmatrix} \varphi_1(A) & 0 \\ 0 & \varphi_2(A) \end{bmatrix}.$$

We may now assume that $\Phi(\lambda I)$ is scalar for every $\lambda>0$.
To finish the proof, we may assume that $\Phi(-I)$ is not a scalar matrix. If that is not the case, then $\Phi(\lambda I)= \Phi(\sqrt{-\lambda}I) \Phi(-I) \Phi(\sqrt{-\lambda}I)$ is scalar also for every $\lambda < 0$. So suppose that $\Phi(-I)=U \begin{bmatrix} -1 & 0 \\ 0 & 1 \end{bmatrix}U^*$ for some unitary matrix $U$.
Since $$ \Phi(A) = \Phi((-I)A(-I)) = \Phi(-I)\Phi(A)\Phi(-I) ,$$
$\Phi(-I)$ commutes with any $\Phi(A)$, so again $\Phi(A)=U\begin{bmatrix} \varphi_1(A) & 0 \\ 0 & \varphi_2(A) \end{bmatrix} U^*$.
\end{proof}

Next we show that $\Phi$ maps scalar matrices to scalar matrices.
\begin{Lemma}
Let $\Phi:\s_2(\C) \to \s_2(\C)$ be a nondegenerate J.T.P. homomorphism. Then there exists a multiplicative map $\Psi: \RR \to \RR$ with $\Psi(1)=1$ such that
$\Phi(\lambda I)= \Psi(\lambda) I$ for every $\lambda \in \RR$.
\end{Lemma}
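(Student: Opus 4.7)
Assume for contradiction that there exists $\lambda_0 \in \RR$ for which $\Phi(\lambda_0 I)$ is not a scalar matrix. Then Lemma \ref{lema5.1.5} produces a unitary $U$ and distinct unital J.T.P. homomorphisms $\varphi_1, \varphi_2: \s_2(\C) \to \RR$ such that $\Phi(A) = U \diag(\varphi_1(A), \varphi_2(A)) U^*$ for every $A \in \s_2(\C)$. The plan is to derive a contradiction by playing the idempotents $E = \begin{bmatrix} 1 & 0 \\ 0 & 0 \end{bmatrix}$ and $F = \begin{bmatrix} 0 & 0 \\ 0 & 1 \end{bmatrix}$ against the involution $J' = \begin{bmatrix} 0 & 1 \\ 1 & 0 \end{bmatrix}$.

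By Lemma \ref{rank1-v-rank1}, $\Phi(E)$ has rank one, and since $E^3 = E$ the J.T.P. identity gives $\Phi(E)^3 = \Phi(E)$, so its unique nonzero eigenvalue is $\pm 1$. As recorded just before Lemma \ref{rank1-v-rank1}, $\sigma(\Phi(J')) = \{-1,1\}$, and diagonality forces $\Phi(J') = U\diag(\pm 1, \mp 1) U^*$. Conjugation of any diagonal matrix by such a $\pm 1$-diagonal matrix is the identity, so from $J' E J' = F$ we obtain $\Phi(F) = \Phi(J')\Phi(E)\Phi(J') = \Phi(E)$. Applying $\Phi$ to the identity $EFE = 0$ then yields $0 = \Phi(E) \Phi(F) \Phi(E) = \Phi(E)^3 = \Phi(E)$, contradicting $\Phi(E) \neq 0$.

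Therefore $\Phi(\lambda I)$ is scalar for every $\lambda \in \RR$, and we may define $\Psi: \RR \to \RR$ by $\Phi(\lambda I) = \Psi(\lambda) I$, with $\Psi(1) = 1$ immediate from $\Phi(I) = I$. To see that $\Psi$ is multiplicative, apply Proposition \ref{C_to_all} to the J.T.P. homomorphism $\lambda \mapsto \Phi(\lambda I)$: it yields a unitary $V$, a $\pm 1$-diagonal $D$, and multiplicative maps $\varphi_1, \varphi_2: \RR \to \RR$ with $\Phi(\lambda I) = V D \diag(\varphi_1(\lambda), \varphi_2(\lambda)) V^*$; the scalar form of $\Phi(\lambda I)$ together with $\Psi(1) = 1$ forces $D = I$ and $\Psi = \varphi_1 = \varphi_2$, which is multiplicative. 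The main obstacle is the first step, where the limited rigidity of the diagonal form has to be leveraged: the trivial conjugation action of $\Phi(J')$ collapses $\Phi(E)$ and $\Phi(F)$ onto the same nonzero diagonal slot, in conflict with $EFE = 0$.
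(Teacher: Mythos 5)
Your proof is correct and shares the paper's skeleton: assume $\Phi(\lambda_0 I)$ is not scalar, invoke Lemma \ref{lema5.1.5} to put every $\Phi(A)$ in simultaneously diagonal form, and derive a contradiction with nondegeneracy. The difference lies in how the contradiction is produced. The paper observes that the diagonal entries $\varphi_1,\varphi_2$ are scalar-valued J.T.P.\ homomorphisms with $\varphi_i(0)=0$ and $\varphi_i(I)=1$, so by Lemma \ref{rank-less-n} they vanish on every matrix of rank less than $2$, contradicting Lemma \ref{rank1-v-rank1}. You instead re-derive the needed vanishing by hand: in the common eigenbasis $\Phi\left(\begin{bmatrix}0&1\\1&0\end{bmatrix}\right)$ is a diagonal sign matrix, so conjugation by it is trivial, whence $\Phi(E_{11})=\Phi(E_{22})$, and $E_{11}E_{22}E_{11}=0$ together with $\Phi(0)=0$ and $\Phi(E_{11})^3=\Phi(E_{11})$ forces $\Phi(E_{11})=0$, against Lemma \ref{rank1-v-rank1}. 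This is a valid, self-contained substitute for the citation of Lemma \ref{rank-less-n} (essentially the $2\times 2$ instance of that lemma's proof). You also make the multiplicativity of $\Psi$ explicit by applying Proposition \ref{C_to_all} to the induced J.T.P.\ homomorphism $\lambda\mapsto\Phi(\lambda I)$, a point on which the paper's proof is silent, so your version is if anything more complete; the only cosmetic remark is that $D=I$ is not literally forced (signs could be absorbed into the $\varphi_i$), but the conclusion that $\Psi=\varphi_1=\varphi_2$ is multiplicative stands.
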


\begin{proof}
Suppose there exists $\lambda \in \RR$ such that $\Phi(\lambda I)$ is not a scalar matrix.
By previous lemma we know that
$$\Phi(A)=U\begin{bmatrix} \varphi_1(A) & 0 \\ 0 & \varphi_2(A) \end{bmatrix}U^*$$
for every $A \in \s_2(\C)$.
But then $\Phi(A)=0$ for every $A\in\s_2(\C)$ with $\rank A=1$ by Lemma \ref{rank-less-n}, which contradicts Lemma \ref{rank1-v-rank1}.
\end{proof}

A matrix $\begin{bmatrix} 1 & 0 \\ 0 & 0 \end{bmatrix}$ is an idempotent of $\rank 1$, hence it is mapped to an idempotent of rank 1 by Lemma \ref{rank1-v-rank1}. So $\Phi\left(\begin{bmatrix} 1 & 0 \\ 0 & 0 \end{bmatrix}\right)=U \begin{bmatrix} 1 & 0 \\ 0 & 0 \end{bmatrix}U^*$ for some $U\in \s_2(\C)$ unitary. By taking $\Phi'(A)=U\Phi(A)U^*$,  we may assume without the loss of generality that $\Phi \left(\begin{bmatrix} 1 & 0 \\ 0 & 0 \end{bmatrix}\right)=\begin{bmatrix} 1 & 0 \\ 0 & 0 \end{bmatrix}$. In other words, $\Phi$ preserves $E_{11}=\begin{bmatrix} 1 & 0 \\ 0 & 0 \end{bmatrix}$.

\begin{Lemma} \label{lema52}
Let $\Phi: \s_2(\C) \to \s_2(\C)$ be a nondegenerate J.T.P. homomorphism preserving $E_{11}$. Then
$\Phi \left(\begin{bmatrix} a & b \\ \bar{b} & c\end{bmatrix}\right) = \begin{bmatrix} \Psi(a) & * \\ * & * \end{bmatrix}$ for every $a\in \RR$.
\end{Lemma}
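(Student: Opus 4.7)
The plan is to exploit the identity $E_{11}AE_{11}=aE_{11}$ together with the preservation $\Phi(E_{11})=E_{11}$, using the J.T.P.\ property to extract the $(1,1)$-entry of $\Phi(A)$.

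Concretely, for $A=\begin{bmatrix} a & b \\ \bar{b} & c \end{bmatrix}\in\s_2(\C)$ a direct computation gives $E_{11}AE_{11}=aE_{11}$. Applying $\Phi$ and using the J.T.P.\ identity together with $\Phi(E_{11})=E_{11}$ yields
\begin{equation*}
E_{11}\,\Phi(A)\,E_{11}=\Phi(E_{11}AE_{11})=\Phi(aE_{11}).
\end{equation*}
Thus the entire left-hand side is determined by its $(1,1)$-entry, so it suffices to show $\Phi(aE_{11})=\Psi(a)E_{11}$.

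To do so, I would write $aE_{11}=E_{11}\cdot(aI)\cdot E_{11}$ (which holds for every $a\in\RR$, not only $a\ge 0$, so the awkward square-root trick is avoided), apply $\Phi$ and invoke the preceding lemma, which gives $\Phi(aI)=\Psi(a)I$:
\begin{equation*}
\Phi(aE_{11})=\Phi(E_{11})\,\Phi(aI)\,\Phi(E_{11})=E_{11}\cdot\Psi(a)I\cdot E_{11}=\Psi(a)E_{11}.
\end{equation*}
Writing $\Phi(A)=\begin{bmatrix} x & y \\ \bar y & z \end{bmatrix}$, the equality $E_{11}\Phi(A)E_{11}=\Psi(a)E_{11}$ reads $\begin{bmatrix} x & 0 \\ 0 & 0 \end{bmatrix}=\begin{bmatrix} \Psi(a) & 0 \\ 0 & 0 \end{bmatrix}$, forcing $x=\Psi(a)$, which is exactly the claim.

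There is essentially no obstacle here; the only subtlety worth flagging is choosing the factorisation $aE_{11}=E_{11}(aI)E_{11}$ rather than $aE_{11}=(\sqrt{a}I)E_{11}(\sqrt{a}I)$, since the latter would only cover $a\ge 0$ and would leave the negative case requiring a separate argument through $\Phi(-I)$. The chosen factorisation works uniformly for all real $a$, so the lemma follows in one line once the previous lemma on scalar matrices is invoked.
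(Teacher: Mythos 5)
Your proof is correct and follows essentially the same route as the paper: both use $E_{11}\Phi(A)E_{11}=\Phi(E_{11}AE_{11})=\Phi(aE_{11})$ and then the factorisation $aE_{11}=E_{11}(aI)E_{11}$ together with $\Phi(aI)=\Psi(a)I$ from the preceding lemma. The factorisation you flag as the key choice is in fact exactly the one the paper uses, so there is no divergence to report.
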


\begin{proof} To compute upper left entry of $\Phi\left( \begin{bmatrix} a & b \\ \bar{b} & c \end{bmatrix} \right)$, we take
\begin{align*}
\begin{bmatrix} 1 & 0 \\ 0 & 0 \end{bmatrix} \Phi \left(\begin{bmatrix} a & b \\ \bar{b} & c \end{bmatrix} \right) \begin{bmatrix} 1 & 0 \\ 0 & 0 \end{bmatrix}  &= \Phi \left( \begin{bmatrix} 1 & 0 \\ 0 & 0 \end{bmatrix} \begin{bmatrix} a & b \\ \bar{b} & c \end{bmatrix} \begin{bmatrix} 1 & 0 \\ 0 & 0 \end{bmatrix} \right)=\Phi \left(\begin{bmatrix} a & 0 \\ 0 & 0 \end{bmatrix} \right) \\
& = \Phi \left( \begin{bmatrix} 1 & 0 \\ 0 & 0 \end{bmatrix} \begin{bmatrix} a & 0 \\ 0 & a \end{bmatrix} \begin{bmatrix} 1 & 0 \\ 0 & 0 \end{bmatrix} \right) \\
&= \begin{bmatrix} 1 & 0 \\ 0 & 0 \end{bmatrix} \Psi(a) I \begin{bmatrix} 1 & 0 \\ 0 & 0 \end{bmatrix} 
 = \begin{bmatrix} \Psi(a)  & 0 \\ 0 & 0 \end{bmatrix},
\end{align*}
which concludes the proof.
\end{proof}

\begin{Lemma} \label{lema53}
Let $\Phi: \s_2(\C) \to \s_2(\C)$ be a nondegenerate J.T.P. homomorphism preserving $E_{11}$. Then
$\Phi \left(\begin{bmatrix} a & b \\ \bar{b} & c \end{bmatrix} \right)= \begin{bmatrix} * & * \\ * & \Psi(c) \end{bmatrix}$ for every $c\in \RR$.
\end{Lemma}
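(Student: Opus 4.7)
The plan is to mirror Lemma \ref{lema52} exactly, replacing the sandwich $E_{11}(\cdot)E_{11}$ with $E_{22}(\cdot)E_{22}$ so as to extract the $(2,2)$ entry. The preparation step, where all the actual work lies, is to establish that $\Phi$ preserves $E_{22} = \begin{bmatrix} 0 & 0 \\ 0 & 1 \end{bmatrix}$, the analog of the normalisation already in force for $E_{11}$.

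To identify $\Phi(E_{22})$, I would combine three observations. First, Lemma \ref{rank1-v-rank1} gives $\rank \Phi(E_{22}) = 1$. Second, the decomposition $E_{22} = E_{22}\cdot I\cdot E_{22}$ together with regularity $\Phi(I) = I$ yields $\Phi(E_{22})^2 = \Phi(E_{22})$, making $\Phi(E_{22})$ a Hermitian idempotent. Third, $E_{11} E_{22} E_{11} = 0$ together with $\Phi(E_{11}) = E_{11}$ forces $E_{11}\Phi(E_{22}) E_{11} = \Phi(0) = 0$, so the $(1,1)$ entry of $\Phi(E_{22})$ vanishes. Writing $\Phi(E_{22}) = \begin{bmatrix} 0 & q \\ \bar{q} & r \end{bmatrix}$ and imposing idempotency plus rank one then forces $q = 0$ and $r = 1$, i.e.\ $\Phi(E_{22}) = E_{22}$.

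Once this is in hand, the conclusion is a direct analog of the computation in Lemma \ref{lema52}. Applying the J.T.P. identity and then using $\Phi(cE_{22}) = \Phi(E_{22} \cdot cI \cdot E_{22}) = E_{22}\, \Psi(c) I\, E_{22} = \Psi(c) E_{22}$ gives
$$E_{22}\, \Phi\!\left(\begin{bmatrix} a & b \\ \bar{b} & c \end{bmatrix}\right) E_{22} = \Phi\!\left(\begin{bmatrix} 0 & 0 \\ 0 & c \end{bmatrix}\right) = \Psi(c)\, E_{22},$$
so the $(2,2)$ entry of $\Phi\!\left(\begin{bmatrix} a & b \\ \bar{b} & c \end{bmatrix}\right)$ is $\Psi(c)$. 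The only genuinely substantive step is the identification $\Phi(E_{22}) = E_{22}$, and in particular arguing that the nonzero eigenvalue is $+1$ rather than $-1$; the idempotent condition arising from $\Phi(I) = I$ is what rules out the negative sign.
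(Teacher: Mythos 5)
Your proposal is correct and follows essentially the same route as the paper: both arguments reduce to showing $\Phi(E_{22})=E_{22}$ (rank one by Lemma \ref{rank1-v-rank1}, idempotent since $\Phi(I)=I$, vanishing $(1,1)$ entry) and then sandwich with $E_{22}$ exactly as in Lemma \ref{lema52}. The only cosmetic differences are that the paper gets the zero $(1,1)$ entry by citing Lemma \ref{lema52} at $a=0$ rather than from $E_{11}E_{22}E_{11}=0$, and pins down the remaining entries via the trace of a rank-one idempotent rather than the idempotency equations directly.
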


\begin{proof}
Since a matrix $\begin{bmatrix} 0 & 0 \\ 0 & 1 \end{bmatrix}$ is an idempotent of rank 1, it is mapped to an idempotent of rank 1. By previous lemma $\Phi \left(\begin{bmatrix} 0 & 0 \\ 0 & 1 \end{bmatrix} \right)$ has upper left entry equal to 0. Since its trace is 1, lower right entry equals 1, thus off-diagonal entries are 0. Consequently, $\Phi$ preserves $\begin{bmatrix} 0 & 0 \\ 0 & 1 \end{bmatrix}$ also and we proceed the  same way as in the proof of Lemma \ref{lema52}.
\end{proof}

\medskip

A consequence of Lemmas \ref{lema52} and \ref{lema53} is the equality $$\Phi \left(\begin{bmatrix} a & b \\ \bar{b} & c \end{bmatrix}\right) = \begin{bmatrix} \Psi(a) & * \\ * & \Psi(c) \end{bmatrix}.$$
Suppose
$\Phi \left(\begin{bmatrix} a & 0 \\ 0 & b \end{bmatrix} \right)= \begin{bmatrix} \Psi(a) & \beta \\ \bar{\beta} & \Psi(b) \end{bmatrix}$.
Then
$$\Phi \left(\begin{bmatrix} a & 0 \\ 0 & b \end{bmatrix} \right)^2= \Phi \left(\begin{bmatrix} a^2 & 0 \\ 0 & b^2 \end{bmatrix}\right) = \begin{bmatrix} \Psi(a)^2+|\beta|^2 & * \\ * & * \end{bmatrix} = \begin{bmatrix} \Psi(a^2) & * \\ * & * \end{bmatrix},$$
hence $|\beta|^2 =0$, which amounts to $\beta =0$. We conclude that $$\Phi\left(\begin{bmatrix} a & 0 \\ 0 & b \end{bmatrix}\right) = \begin{bmatrix} \Psi(a) & 0 \\ 0 & \Psi(b) \end{bmatrix}.$$

\begin{Lemma}
Let $\Phi: \s_2(\C) \to \s_2(\C)$ be a nondegenerate J.T.P. homomorphism preserving $E_{11}$. Then
$\Phi \left(\begin{bmatrix} a & 0 \\ 0 & b \end{bmatrix}\right) = \begin{bmatrix} \Psi(a) & 0 \\ 0 & \Psi(b) \end{bmatrix}$ for every $a,b\in \RR$.
\end{Lemma}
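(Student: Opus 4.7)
The plan is to exploit the squaring identity $\Phi(A^2)=\Phi(AIA)=\Phi(A)\Phi(I)\Phi(A)=\Phi(A)^2$, which is available because $\Phi$ is regular. Combining this with Lemmas \ref{lema52} and \ref{lema53}, which already pin down the diagonal entries of $\Phi$ evaluated on an arbitrary Hermitian matrix, should force the off-diagonal entry of $\Phi(\mathrm{diag}(a,b))$ to vanish.

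Concretely, I would first invoke Lemmas \ref{lema52} and \ref{lema53} to write
$$
\Phi\!\left(\begin{bmatrix} a & 0 \\ 0 & b \end{bmatrix}\right)
=\begin{bmatrix} \Psi(a) & \beta \\ \bar{\beta} & \Psi(b) \end{bmatrix}
$$
for some $\beta\in\C$ depending on $a,b$. Then I would square this matrix on the one hand, and on the other hand observe that
$$
\Phi\!\left(\begin{bmatrix} a & 0 \\ 0 & b \end{bmatrix}\right)^{2}
=\Phi\!\left(\begin{bmatrix} a & 0 \\ 0 & b \end{bmatrix} I \begin{bmatrix} a & 0 \\ 0 & b \end{bmatrix}\right)
=\Phi\!\left(\begin{bmatrix} a^{2} & 0 \\ 0 & b^{2} \end{bmatrix}\right),
$$
whose upper-left entry equals $\Psi(a^{2})$ by Lemma \ref{lema52}. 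Comparing upper-left entries then yields
$$
\Psi(a)^{2}+|\beta|^{2}=\Psi(a^{2})=\Psi(a)^{2},
$$
where the last equality uses multiplicativity of $\Psi$ (which holds because $\Psi$ was produced as a J.T.P.\ homomorphism on scalars in the previous lemma, with $\Psi(1)=1$). Hence $|\beta|^{2}=0$, so $\beta=0$, which is exactly the claimed diagonal form.

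I do not foresee a genuine obstacle here: the real content was absorbed into Lemmas \ref{lema52}--\ref{lema53} and into the identification of $\Phi(\lambda I)$ with $\Psi(\lambda)I$. The only minor subtlety is making sure that the multiplicativity identity $\Psi(a^{2})=\Psi(a)^{2}$ is legitimate for \emph{every} real $a$ (including $a=0$ and negative values); since $\Psi$ was obtained from a J.T.P.\ homomorphism on $\RR$ with $\Psi(1)=1$, one has $\Psi(a^{2})=\Psi(a\cdot 1\cdot a)=\Psi(a)\Psi(1)\Psi(a)=\Psi(a)^{2}$, which covers all cases uniformly.
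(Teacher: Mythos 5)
Your proposal is correct and coincides with the paper's own argument: the paper likewise writes $\Phi(\mathrm{diag}(a,b))=\begin{bmatrix}\Psi(a)&\beta\\\bar{\beta}&\Psi(b)\end{bmatrix}$ via Lemmas \ref{lema52} and \ref{lema53}, squares it, compares the upper-left entry with $\Psi(a^2)$, and concludes $\beta=0$. Your explicit justification of $\Psi(a^2)=\Psi(a)^2$ for all real $a$ is a small welcome addition that the paper leaves implicit.
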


\medskip

Now take $x,y>0$. Then $\begin{bmatrix} x-y & \sqrt{x y} \\ \sqrt{x y} & 0 \end{bmatrix}= B \begin{bmatrix} x & 0 \\ 0 & -y \end{bmatrix} B$ for some Hermitian unitary matrix $B\in \s_2(\C)$ by Lemma \ref{lemma21}. Thus
$$\Phi \left(\begin{bmatrix} x-y & \sqrt{x y} \\ \sqrt{x y} & 0 \end{bmatrix}\right) = \begin{bmatrix} \Psi(x-y) & * \\ * & 0 \end{bmatrix} = \Phi(B) \begin{bmatrix} \Psi(x) & 0 \\ 0 & \Psi(-y) \end{bmatrix} \Phi(B).$$
Hence $\Psi(x-y)= \Psi(x)+ \Psi(-y)$, since trace of matrix is preserved under similarity action. Taking $y=x$, we get $\Psi(-x)=-\Psi(x)$ for $x>0$, hence for all $x\in \RR$. But then the equality $\Psi(x-y)= \Psi(x)+ \Psi(-y)$ also holds for all $x, y \in \RR$. Taking $z=-y$, we obtain additivity of $\Psi$.
Since a multiplicative function $\Psi: \RR \to \RR$ is additive, it must be an identity by \cite[Theorem 1.10]{sahoo}.


\medskip

We collect these facts into the following lemma.

\begin{Lemma}
Let $\Phi:\s_2(\C) \to \s_2(\C)$ be a nondegenerate J.T.P. homomorphism preserving $E_{11}$. Then 
$\Phi(\lambda I)= \lambda I$ for every $\lambda \in \RR$.

\end{Lemma}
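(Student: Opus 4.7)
The proof assembles the observations already collected in the paragraphs preceding the statement. By the previous lemma there is a multiplicative map $\Psi:\RR\to\RR$ with $\Psi(1)=1$ such that $\Phi(\lambda I)=\Psi(\lambda)I$, and by the diagonal lemma established just above, $\Phi(\diag(a,b))=\diag(\Psi(a),\Psi(b))$ for all $a,b\in\RR$. To upgrade $\Psi$ from multiplicative to the identity it suffices to show that $\Psi$ is additive, since a multiplicative additive function on $\RR$ is the identity by \cite[Theorem 1.10]{sahoo}.

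To obtain additivity, the plan is to exploit the realization given by Lemma \ref{lemma21}: for $x,y>0$ the Hermitian matrix $\begin{bmatrix} x-y & \sqrt{xy} \\ \sqrt{xy} & 0 \end{bmatrix}$ is conjugate via a Hermitian involution $B$ to $\diag(x,-y)$. Applying $\Phi$ and using Lemmas \ref{lema52}--\ref{lema53} to read off the diagonal entries, the $(1,1)$ entry of the left-hand side is $\Psi(x-y)$, while the right-hand side has the same trace as $\Phi(\diag(x,-y))=\diag(\Psi(x),\Psi(-y))$ because similarity preserves trace (and the $(2,2)$ entry of the left-hand side is $\Psi(0)=0$). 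Comparing traces gives
\[
\Psi(x-y)=\Psi(x)+\Psi(-y)\qquad\text{for all }x,y>0.
\]

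Setting $y=x$ yields $0=\Psi(0)=\Psi(x)+\Psi(-x)$, so $\Psi$ is odd, first on $(0,\infty)$ and then on all of $\RR$ via $\Psi(-x)=-\Psi(x)$. Writing $z=-y$ in the displayed identity extends it to all $x,z\in\RR$, giving $\Psi(x+z)=\Psi(x)+\Psi(z)$. Thus $\Psi$ is both additive and multiplicative, hence the identity by the cited theorem, and $\Phi(\lambda I)=\lambda I$ for every $\lambda\in\RR$. The only non-routine step is the trace-matching identity, whose force depends crucially on having already pinned down the full diagonal form of $\Phi$ on diagonal inputs; once that is in hand, the rest is a short computation and an invocation of the Sahoo--Kannappan rigidity theorem.
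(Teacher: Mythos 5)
Your proposal is correct and follows essentially the same route as the paper: the same conjugation of $\operatorname{diag}(x,-y)$ by a Hermitian involution to produce $\begin{bmatrix} x-y & \sqrt{xy} \\ \sqrt{xy} & 0 \end{bmatrix}$, the same trace comparison yielding $\Psi(x-y)=\Psi(x)+\Psi(-y)$, and the same appeal to the Sahoo--Kannappan theorem to conclude that an additive multiplicative map on $\RR$ is the identity.
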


Suppose $\Phi:\s_2(\C) \to \s_2(\C)$ is a nondegenerate J.T.P. homomorphism preserving $E_{11}$.
By the previous two lemmas, we have $\Phi \left(\begin{bmatrix} a & b \\ \bar{b} & c \end{bmatrix}\right) = \begin{bmatrix} a & * \\ * & c \end{bmatrix}$. Define
$$\Phi \left(\begin{bmatrix} 1 & 1 \\ 1 & 0 \end{bmatrix}\right)= \begin{bmatrix} 1 & \gamma \\ \bar{\gamma} & 0 \end{bmatrix} \quad \textrm{and} \quad \Phi \left(\begin{bmatrix} 0 & 1 \\ 1 & 0 \end{bmatrix}\right) = \begin{bmatrix} 0 & \delta \\ \bar{\delta} & 0 \end{bmatrix}.$$ Take $a>0$. Then
$$\begin{bmatrix} \sqrt{a} & 0 \\ 0 & \frac{1}{\sqrt{a}} \end{bmatrix} \begin{bmatrix} 1 & 1 \\ 1 & 0 \end{bmatrix} \begin{bmatrix} \sqrt{a} & 0 \\ 0 & \frac{1}{\sqrt{a}} \end{bmatrix} = \begin{bmatrix} a & 1 \\ 1 & 0 \end{bmatrix},$$
hence $\Phi \left(\begin{bmatrix} a & 1 \\ 1 & 0 \end{bmatrix}\right) = \begin{bmatrix} a & \gamma \\ \bar{\gamma} & 0 \end{bmatrix}$. We apply $\Phi$ on both hand sides of
\begin{equation*} 
\begin{bmatrix} 1 & 1 \\ 1 & 0 \end{bmatrix} \begin{bmatrix} 0 & 1 \\ 1 & 0 \end{bmatrix} \begin{bmatrix} 1 & 1 \\ 1 & 0 \end{bmatrix} = \begin{bmatrix} 2 & 1 \\ 1 & 0 \end{bmatrix}.
\end{equation*}
to obtain
\begin{align*}
\Phi \left(\begin{bmatrix} 1 & 1 \\ 1 & 0 \end{bmatrix}\right) &\Phi \left(\begin{bmatrix} 0 & 1 \\ 1 & 0 \end{bmatrix}\right)  \Phi \left(\begin{bmatrix} 1 & 1 \\ 1 & 0 \end{bmatrix}\right) = \begin{bmatrix} 1 & \gamma \\ \bar{\gamma} & 0 \end{bmatrix} \begin{bmatrix} 0 & \delta \\ \bar{\delta}& 0 \end{bmatrix} \begin{bmatrix} 1 & \gamma \\ \bar{\gamma} & 0 \end{bmatrix} =\\
& = \begin{bmatrix} \gamma \bar{\delta}+ \bar{\gamma} \delta & \gamma^2 \bar{\delta} \\ \bar{\gamma}^2 \delta & 0 \end{bmatrix} = \Phi \left( \begin{bmatrix} 2 & 1 \\ 1 & 0 \end{bmatrix}\right) = \begin{bmatrix} 2 & \gamma \\ \bar{\gamma} & 0 \end{bmatrix}.
\end{align*}
Thus we get $\gamma^2 \bar{\delta} = \gamma$. Since $\begin{bmatrix} 0 & 1 \\ 1 & 0 \end{bmatrix}$ is an involution, so is $\Phi \left( \begin{bmatrix} 0 & 1 \\ 1 & 0 \end{bmatrix}\right) = \begin{bmatrix} 0 & \delta \\ \bar{\delta} & 0 \end{bmatrix}$. This gives us $|\delta|=1$, hence $\gamma = \delta$.

The next step is taking arbitrary $x,y,z\in \RR$, $y,z\neq 0$, such that $\sign y= \sign z$. Note that
$$\begin{bmatrix} x & y \\ y & z \end{bmatrix} = \begin{bmatrix} \frac{y}{z} & 1 \\ 1 & 0 \end{bmatrix} \begin{bmatrix} z & 0 \\ 0 & x-\frac{y^2}{z} \end{bmatrix} \begin{bmatrix} \frac{y}{z} & 1 \\ 1 & 0 \end{bmatrix},$$
hence
\begin{equation} \label{eq1}
\Phi \left(\begin{bmatrix} x & y \\ y & z \end{bmatrix}\right) = \begin{bmatrix} x & y \gamma \\ y \bar{\gamma} & z \end{bmatrix}.
\end{equation}

On the other hand,
$$\Phi \left(\begin{bmatrix} x & -y \\ -y & z \end{bmatrix}\right) = \Phi \left( \begin{bmatrix} -1 & 0 \\ 0 & 1 \end{bmatrix} \begin{bmatrix} x & y \\ y & z \end{bmatrix} \begin{bmatrix} -1 & 0 \\ 0 & 1 \end{bmatrix} \right) = \begin{bmatrix} x & -y \gamma \\ -y \bar{\gamma} & z \end{bmatrix},$$
hence the equation \eqref{eq1} holds for all $x,y,z\in \RR$.

\medskip

Denote with $\Gamma$ the unit circle of $\C$.
From Lemmas \ref{lema52} and \ref{lema53} we know that there exists $\omega: \Gamma \to \C$ such that $\Phi \left( \begin{bmatrix} 0 & \beta \\ \bar{\beta} & 0 \end{bmatrix} \right)= \begin{bmatrix} 0 & \omega(\beta) \\ \overline{\omega(\beta)} & 0 \end{bmatrix}$ for every $\beta \in \Gamma$. Since $\begin{bmatrix} 0 & \omega(\beta) \\ \overline{\omega(\beta)} & 0 \end{bmatrix}$ is an involution, it must be that $|\omega(\beta)|=1$, hence $\omega: \Gamma \to \Gamma$. Define $\rho: \Gamma \to \Gamma$ with $\rho(\beta)= \frac{\omega(\beta)}{\beta \gamma}$.
Then it holds that $\Phi \left( \begin{bmatrix} 0 & \beta \\ \bar{\beta} & 0 \end{bmatrix} \right)= \begin{bmatrix} 0 & \beta\gamma\rho(\beta) \\ \overline{\beta\gamma\rho(\beta)} & 0 \end{bmatrix}$.
Also, $\rho(1)=1$.

Take $\alpha \in \Gamma$ to obtain
\begin{align*}
\Phi\left( \begin{bmatrix} 0& \alpha \beta^2 \\ \overline{\alpha \beta^2} & 0 \end{bmatrix} \right) &= \Phi \left( \begin{bmatrix} 0 & \beta \\ \bar{\beta} & 0 \end{bmatrix} \begin{bmatrix} 0 & \bar{\alpha} \\ \alpha & 0 \end{bmatrix} \begin{bmatrix} 0 & \beta \\ \bar{\beta} & 0 \end{bmatrix} \right) \\
&= \begin{bmatrix}  0 & \alpha \beta^2\gamma\rho(\alpha \beta^2) \\ \overline{\alpha \beta^2\gamma\rho(\alpha \beta^2)} & 0 \end{bmatrix} \\ 
&= \begin{bmatrix} 0 & \beta\gamma\rho(\beta) \\ \overline{\beta\gamma\rho(\beta)} & 0 \end{bmatrix} 
\begin{bmatrix} 0 & \bar{\alpha}\gamma\rho(\bar{\alpha}) \\ \overline{\bar{\alpha}\gamma\rho(\bar{\alpha})} & 0 \end{bmatrix} 
\begin{bmatrix} 0 & \beta\gamma\rho(\beta) \\ \overline{\beta\gamma\rho(\beta)} & 0 \end{bmatrix}\\
& = \begin{bmatrix} 0 & \alpha \beta^2\gamma\rho(\beta)^2 \overline{ \rho(\bar{\alpha})} \\ \overline{ \alpha \beta^2 \gamma \rho(\beta)^2} \rho(\bar{\alpha}) & 0 \end{bmatrix},
\end{align*}
so
\begin{equation} \label{eq2}
\rho(\alpha \beta^2)= \overline{\rho(\bar{\alpha})} \rho(\beta)^2.
\end{equation}
If we insert $\alpha=1$ and arbitrary $\beta$, we get $\rho(\beta^2)=\rho(\beta)^2$. On the other hand, if we insert $\beta=1$ and arbitrary $\alpha$, we get $\overline{\rho(\bar{\alpha})} = \rho(\alpha)$. Using these two expressions on \eqref{eq2}, we get
$\rho(\alpha \beta^2) = 
\rho(\alpha) \rho(\beta)^2 =
\rho(\alpha) \rho(\beta^2)  .$
Denoting $\beta':= \beta^2$, we get 
\begin{equation} \label{eq3}
\rho(\alpha \beta')= \rho(\alpha) \rho(\beta') 
\end{equation}
 for every $\alpha, \beta' \in \Gamma$, thus a function $\rho$ is multiplicative.

Take arbitrary $x,z \in \RR$ and $y\in\C$. Write $y=|y| e^{i \phi}$ for $\phi \in [0,2\pi)$. Then 
\begin{align*}
\Phi &\left( \begin{bmatrix} x & y \\ \bar{y} & z \end{bmatrix}\right) = \Phi \left( 
\begin{bmatrix} 0 & e^{i\frac{\phi}{2}} \\ e^{-i \frac{\phi}{2}} & 0 \end{bmatrix} 
\begin{bmatrix} z & |y| \\ |y| & x \end{bmatrix} \begin{bmatrix} 0 & e^{i\frac{\phi}{2}} \\ e^{-i \frac{\phi}{2}} & 0 \end{bmatrix} \right) = \\
& = \begin{bmatrix} 0 & e^{i\frac{\phi}{2}}\gamma\rho(e^{i\frac{\phi}{2}}) \\ \overline{e^{i\frac{\phi}{2}}\gamma\rho(e^{i\frac{\phi}{2}})} & 0 \end{bmatrix} \begin{bmatrix} z & |y| \gamma \\ |y| \bar{\gamma} & x \end{bmatrix} \begin{bmatrix} 0 & e^{i\frac{\phi}{2}}\gamma\rho(e^{i\frac{\phi}{2}}) \\ \overline{e^{i\frac{\phi}{2}}\gamma\rho(e^{i\frac{\phi}{2}})} & 0 \end{bmatrix} \\
&= \begin{bmatrix} x & |y| e^{i \phi} \gamma\rho(e^{i\frac{\phi}{2}})\rho(e^{i\frac{\phi}{2}}) \\ |y|\overline{e^{i \phi} \gamma\rho(e^{i\frac{\phi}{2}})\rho(e^{i\frac{\phi}{2}})} & z\end{bmatrix}= \begin{bmatrix} x & y\gamma  \rho(e^{i\phi}) \\ \overline{y\gamma\rho(e^{i\phi})} & z\end{bmatrix},
\end{align*}
where the last equality holds due to \eqref{eq3}.


\medskip

Now, take $\beta \in \Gamma$ and calculate
$$\begin{bmatrix} 1 & 1 \\ 1 & 1 \end{bmatrix} \begin{bmatrix} 1 & \beta \\ \bar{\beta} & 0 \end{bmatrix} \begin{bmatrix} 1 & 1 \\ 1 & 1 \end{bmatrix} = \begin{bmatrix} 1+ \beta+ \bar{\beta} & 1+ \beta +\bar{\beta} \\ 1+ \beta +\bar{\beta} & 1+ \beta +\bar{\beta} \end{bmatrix}.$$
Applying $\Phi$ on both hand sides, we obtain
\begin{align*}
\begin{bmatrix} 1 & \gamma \\ \bar{\gamma} & 1 \end{bmatrix} & \begin{bmatrix} 1 & \beta\gamma\rho(\beta) \\ \overline{\beta\gamma\rho(\beta)} & 0 \end{bmatrix} \begin{bmatrix} 1 & \gamma \\ \bar{\gamma} & 1 \end{bmatrix} =  \\
&=\begin{bmatrix} 1+ \beta\rho(\beta) +\bar{\beta}\rho(\bar{\beta}) & \gamma(1+ \beta\rho(\beta) +\bar{\beta}\rho(\bar{\beta})) \\ 
\bar{\gamma}(1+ \beta\rho(\beta) +\bar{\beta}\rho(\bar{\beta})) & 1+ \beta\rho(\beta) +\bar{\beta}\rho(\bar{\beta}) \end{bmatrix} \\
&= \begin{bmatrix} 1 + \beta + \bar{\beta} & \gamma(1 + \beta + \bar{\beta}) \\ \bar{\gamma}(1 + \beta + \bar{\beta}) & 1 + \beta + \bar{\beta} \end{bmatrix},
\end{align*}
which gives us $\beta + \bar{\beta} = \beta\rho(\beta) + \bar{\beta}\rho(\bar{\beta})$. Then $\mathrm{Re} \,\beta = \mathrm{Re}\, \beta\rho(\beta)$, and since $|\beta|=|\rho(\beta)|=1$, it must be that $|\mathrm{Im} \, \beta| = |\mathrm{Im} \, \beta\rho(\beta)|$. Hence $\mathrm{Im} \, \beta\rho(\beta) = \pm \mathrm{Im} \, \beta$. Thus we have either $\beta\rho(\beta)=\beta$ or $\beta\rho(\beta)=\bar{\beta}$.
We obtain that either
$$ \Phi\left( \begin{bmatrix} x & y \\ \bar{y} & z \end{bmatrix}\right) =  \begin{bmatrix} x & y\gamma \\ \bar{y}\bar{\gamma} & z \end{bmatrix}= \begin{bmatrix} 1 & 0 \\ 0 & \bar{\gamma}  \end{bmatrix} \begin{bmatrix} x & y \\ \bar{y} & z \end{bmatrix} \begin{bmatrix} 1 & 0 \\ 0 & \gamma \end{bmatrix} $$
or
$$ \Phi\left( \begin{bmatrix} x & y \\ \bar{y} & z \end{bmatrix}\right) =  \begin{bmatrix} x & \bar{y}\gamma \\ y\bar{\gamma} & z \end{bmatrix} =  \begin{bmatrix} 1 & 0 \\ 0 & \bar{\gamma} \end{bmatrix} \begin{bmatrix} x & \bar{y} \\ y & z \end{bmatrix} \begin{bmatrix} 1 & 0 \\ 0 & \gamma \end{bmatrix}.$$
It is clear that these two forms of $\Phi$ cannot exist simultaneously, hence $\Phi$ always takes a single form for every matrix in $\s_2(\C)$.

\medskip

These findings give us the following lemma.

\begin{Lemma}
Let $\Phi: \s_2(\C) \to \s_2(\C)$ be a nondegenerate J.T.P. homomorphism preserving $E_{11}$. Then there exists a diagonal unitary matrix $U$ such that either $\Phi(A)=U A U^*$ or $\Phi(A)=U A^T U^*= U \bar{A} U^* $.
\end{Lemma}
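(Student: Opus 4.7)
The proof plan is to collect and package the facts already derived in the paragraphs immediately preceding the statement; essentially all the content is there, and what remains is simply to name the unitary $U$ and argue that only a single one of the two candidate forms can hold across all of $\s_2(\C)$.

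First I would set $U := \begin{bmatrix} 1 & 0 \\ 0 & \bar{\gamma} \end{bmatrix}$. This is diagonal, and it is unitary because, after proving $\gamma^2 \bar\delta = \gamma$ and using that $\delta$ is an involution coordinate giving $|\delta|=1$, we obtained $\gamma = \delta$ and hence $|\gamma|=1$. Then I would verify by direct multiplication that the two displayed formulas derived just above the statement,
$$\Phi\!\left(\begin{bmatrix} x & y \\ \bar{y} & z \end{bmatrix}\right) = \begin{bmatrix} x & y\gamma \\ \bar{y}\bar{\gamma} & z \end{bmatrix} \quad\text{or}\quad \Phi\!\left(\begin{bmatrix} x & y \\ \bar{y} & z \end{bmatrix}\right) = \begin{bmatrix} x & \bar{y}\gamma \\ y\bar{\gamma} & z \end{bmatrix},$$
are exactly $UAU^*$ and $U\bar{A}U^*$ respectively. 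Since $A$ is Hermitian, $\bar A = A^T$, so the second form equals $UA^TU^*$ as claimed.

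The remaining point — and the only one requiring real work — is to rule out a mixed behavior, i.e.\ that $\Phi$ might assign the first formula to some $A$ and the second to others. I would argue this via the multiplicativity of $\rho$ established in (\ref{eq3}): the two candidate values of $\rho(\beta)$ forced by $\beta + \bar\beta = \beta\rho(\beta) + \bar\beta\rho(\bar\beta)$ are $\rho(\beta)=1$ and $\rho(\beta)=\bar\beta/\beta$. If both values occurred — say $\rho(\alpha)=1$ with $\alpha$ non-real and $\rho(\beta)=\bar\beta/\beta \neq 1$ — then $\rho(\alpha\beta) = \rho(\alpha)\rho(\beta) = \bar\beta/\beta$, while the dichotomy for $\alpha\beta$ allows only $\rho(\alpha\beta) \in \{1,\overline{\alpha\beta}/(\alpha\beta)\}$, both of which lead to $\alpha = \pm 1$, a contradiction. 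Therefore either $\rho \equiv 1$ on all of $\Gamma$ (giving $\Phi(A) = UAU^*$ everywhere) or $\rho(\beta) = \bar\beta/\beta$ for all $\beta \in \Gamma$ (giving $\Phi(A) = U\bar AU^*$ everywhere), which is precisely the dichotomy claimed in the lemma.

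The main obstacle is this global-consistency step; the rest is bookkeeping that repackages the identities derived just above the statement. I expect the proof as written in the paper to refer back to the preceding derivation and simply remark that the two forms cannot coexist, exactly as the paragraph preceding the lemma already states.
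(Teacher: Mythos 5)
Your proposal is correct and follows essentially the same route as the paper: the lemma there is stated as a summary of the identities derived in the preceding paragraphs, with $U=\diag(1,\bar\gamma)$ implicit in the two displayed factorizations, and the paper simply asserts that ``these two forms cannot exist simultaneously.'' Your multiplicativity argument for $\rho$ (showing that $\rho\equiv 1$ or $\rho(\beta)=\bar\beta/\beta$ globally, since a non-real $\alpha$ with $\rho(\alpha)=1$ and a $\beta$ with $\rho(\beta)=\bar\beta/\beta\neq 1$ would force $\rho(\alpha\beta)$ outside its allowed two-element set) is a valid and welcome filling-in of that asserted step, and the rest matches the paper's derivation.
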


The main result of this section characterizes nondegenerate regular J.T.P. homomorphisms on $\s_2(\C)$.

\begin{Proposition} \label{thm5.9}
Let $\Phi: \s_2(\C) \to \s_2(\C)$. The map $\Phi$ is a nondegenerate J.T.P. homomorphism if and only if there exists a unitary matrix $U$ such that
$$\Phi(A)= U A U^* \quad or \quad \Phi(A)= U A^T U^* = U \bar{A} U^*.$$
\end{Proposition}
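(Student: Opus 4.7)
The plan is to read Proposition \ref{thm5.9} as the clean restatement of what the previous lemma already delivers, once we undo the reduction to the $E_{11}$-preserving normalization that was made at the beginning of the section.

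First I would dispatch the easy ``if'' direction. Both maps $A\mapsto UAU^*$ and $A\mapsto U\bar{A}U^*$ satisfy $\Phi(ABA)=\Phi(A)\Phi(B)\Phi(A)$: the first because conjugation by a unitary is multiplicative, the second because $\overline{ABA}=\bar A\,\bar B\,\bar A$ combines with $UU^*=I$. For Hermitian $A$ we have $A^T=\bar A$, which justifies the identification $U\bar AU^*=UA^TU^*$. Both maps send $0\mapsto 0$ and $I\mapsto I$, so they are regular, and each sends $E_{11}$ to a rank-$1$ Hermitian matrix, so they are nondegenerate.

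For the converse, let $\Phi$ be a nondegenerate J.T.P. homomorphism. By Lemma \ref{rank1-v-rank1}, $\Phi(E_{11})$ has rank $1$, and it is idempotent because $E_{11}$ is (using $\Phi(E_{11})=\Phi(E_{11}\cdot I\cdot E_{11})=\Phi(E_{11})^3$ combined with the rank-$1$ Hermitian spectral normal form). A spectral decomposition produces a unitary $V\in\mm_2(\C)$ with $\Phi(E_{11})=VE_{11}V^*$. Define $\Phi'(A):=V^*\Phi(A)V$. This $\Phi'$ is again a regular nondegenerate J.T.P. homomorphism, and now $\Phi'(E_{11})=E_{11}$. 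The lemma immediately preceding Proposition \ref{thm5.9} applies and yields a diagonal unitary $D$ such that either $\Phi'(A)=DAD^*$ or $\Phi'(A)=D\bar AD^*$ for every $A\in\s_2(\C)$. Transporting back, $\Phi(A)=V\Phi'(A)V^*=(VD)A(VD)^*$ or $\Phi(A)=(VD)\bar A(VD)^*$, and the unitary $U:=VD$ is the one demanded by the statement.

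The hard work is already behind us. The genuinely delicate steps---rank-$1$ preservation (Lemma \ref{rank1-v-rank1}), the identification $\Psi=\mathrm{id}$ on scalars, the diagonal-entry computation, and the off-diagonal analysis through the unit-circle multiplicative map $\rho$ culminating in the dichotomy $\beta\rho(\beta)\in\{\beta,\bar\beta\}$---were all carried out in the preceding lemmas. Here only bookkeeping remains, and the only point requiring care is the direction of the conjugation in the reduction $\Phi\rightsquigarrow\Phi'$, which one must set up so that $\Phi'(E_{11})=E_{11}$ holds. No new obstacle is expected.
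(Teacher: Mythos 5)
Your proposal is correct and follows essentially the same route as the paper: the paper performs the reduction to the $E_{11}$-preserving case by unitary conjugation in the paragraph preceding Lemma \ref{lema52}, lets the chain of lemmas culminate in the diagonal-unitary form, and Proposition \ref{thm5.9} is then just the un-normalized restatement, exactly as you describe. One small slip worth fixing: $\Phi(E_{11})=\Phi(E_{11})\Phi(I)\Phi(E_{11})=\Phi(E_{11})^2$ (not the cube), and you genuinely need the square, since a rank-one Hermitian $M$ satisfying only $M=M^3$ could be $-VE_{11}V^*$; with the correct exponent, idempotency and hence $\Phi(E_{11})=VE_{11}V^*$ follow at once.
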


The nondegenerate case is covered by the forms (ii) and (iii) of Theorem \ref{main}.

\section{Degenerate case}
In this section we consider regular J.T.P. homomorphisms $\Phi:\s_2(\C) \to \s_2(\C)$ such that $\Phi(A)=0$ for every $A\in\s_2(\C)$ with $\rank A\le 1$.
We refer to such regular $\Phi$ as {\it degenerate} J.T.P. homomorphism.

Further we assume that
$$\sigma\left( \Phi\left( \begin{bmatrix} 0 & 1 \\ 1 & 0 \end{bmatrix} \right) \right)= \{ -1,1\}.$$
The other possibility was already considered in Section 5.

\begin{Lemma}
If there exists $\lambda \in\RR$ such that $\Phi(\lambda I)$ is not a scalar, then there exist unitary matrix $U$, distinct unital multiplicative maps $\psi_1, \psi_2: [0,\infty) \to [0,\infty)$ with $\psi_i(0)=0$ for $i\in \{1,2\}$, and maps $\eta_1, \eta_2 :\{0, 1, 2\} \to \{-1,1\}$ which satisfy
$\eta_1(2) = \eta_2(2) = 1$ and $\eta_1(1) \neq \eta_2(1)$, so that $\Phi(A)$ has the form
$$\Phi(A) = U \left[
\begin{array}{cc}
\psi_1(|\det A|) \eta_1(\Syl(A)) & 0 \\
0 & \psi_2(|\det A|) \eta_2(\Syl(A)) 
\end{array} \right] U^* ,$$
for every $A \in \s_2(\C)$, where $\Syl(A)$ is the inertia of $A$.
\end{Lemma}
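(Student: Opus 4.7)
The plan is to combine Lemma \ref{lema5.1.5} with Proposition \ref{thm3.3} to obtain the structural form, and then to read off the normalizations from regularity and the section's standing spectral hypothesis. Since the hypothesis furnishes some $\lambda \in \RR$ with $\Phi(\lambda I)$ not scalar, Lemma \ref{lema5.1.5} applies and yields a unitary $U$ together with distinct unital J.T.P. homomorphisms $\varphi_1, \varphi_2: \s_2(\C) \to \RR$ such that
$$\Phi(A) = U \begin{bmatrix} \varphi_1(A) & 0 \\ 0 & \varphi_2(A) \end{bmatrix} U^*.$$
Each $\varphi_i$ is a J.T.P. homomorphism into $\RR \subset \C$, so Proposition \ref{thm3.3} gives the decomposition $\varphi_i(A) = \psi_i(|\det A|)\,\eta_i(\Syl(A))$ with $\psi_i:[0,\infty) \to \C$ multiplicative and $\eta_i:\{0,1,2\} \to \{-1,1\}$.

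Next I verify the normalizations. Evaluating at the positive definite matrix $\sqrt{x}\,I$ gives $\varphi_i(\sqrt{x}\,I) = \psi_i(x)\eta_i(2) \in \RR$, and $\psi_i(x) = \psi_i(\sqrt{x})^2 \geq 0$, so $\psi_i$ takes values in $[0,\infty)$. Multiplicativity forces $\psi_i(1) \in \{0,1\}$; the value $0$ would collapse $\psi_i$ to zero and contradict $\varphi_i(I)=1$, so $\psi_i(1)=1$ and hence $\eta_i(2)=1$. Regularity gives $\varphi_i(0)=0$, and since $\eta_i(0) \in \{\pm 1\}$ this forces $\psi_i(0)=0$.

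For the sign condition $\eta_1(1) \neq \eta_2(1)$, I invoke the standing assumption of this section. Writing $J_1 = \begin{bmatrix} 0 & 1 \\ 1 & 0 \end{bmatrix}$, which is invertible and nondefinite with $|\det J_1| = 1$ and $\Syl(J_1) = 1$, the diagonal form yields $\sigma(\Phi(J_1)) = \{\varphi_1(J_1), \varphi_2(J_1)\} = \{\eta_1(1), \eta_2(1)\}$. Since this must equal $\{-1,1\}$, we conclude $\eta_1(1) \neq \eta_2(1)$.

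The step that requires a little care is the distinctness of the $\psi_i$. If the witness $\lambda$ from the hypothesis can be chosen positive, then $\eta_i(2)=1$ gives $\psi_1(\lambda^2) \neq \psi_2(\lambda^2)$ directly. Otherwise I use $\Phi(\lambda I) = \Phi(\sqrt{|\lambda|}\,I)\,\Phi(-I)\,\Phi(\sqrt{|\lambda|}\,I)$ to transfer non-scalarity from $\Phi(\lambda I)$ to $\Phi(-I)$, and combine this with the already established diagonal form and the degenerate vanishing on rank-one matrices to separate $\psi_1$ and $\psi_2$ after routine sign bookkeeping with the $\eta_i(0)$. I expect this transfer from negative to positive scalars to be the only mildly delicate point of the argument.
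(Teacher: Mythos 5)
Your main argument is exactly the paper's: invoke Lemma \ref{lema5.1.5} to get the diagonal form with two distinct unital scalar J.T.P.\ homomorphisms, decompose each via Proposition \ref{thm3.3}, read off $\eta_i(2)=1$ from $\Phi(I)=I$ and $\eta_1(1)\neq\eta_2(1)$ from the standing assumption $\sigma\bigl(\Phi\bigl(\begin{smallmatrix}0&1\\1&0\end{smallmatrix}\bigr)\bigr)=\{-1,1\}$. Your extra verifications (that $\psi_i$ lands in $[0,\infty)$, that $\psi_i(1)=1$, that $\psi_i(0)=0$) are correct and are details the paper leaves implicit.

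The one step you defer --- distinctness of $\psi_1$ and $\psi_2$ when the witness $\lambda$ is negative --- cannot be completed by the ``routine sign bookkeeping'' you promise, because the claim is not a consequence of the hypotheses. Take $\psi_1=\psi_2=\psi$ (say the identity on $[0,\infty)$), $\eta_1\equiv 1$, and $\eta_2(2)=1$, $\eta_2(1)=\eta_2(0)=-1$. Each $\varphi_i(A)=\psi(|\det A|)\eta_i(\Syl(A))$ is a J.T.P.\ homomorphism by Proposition \ref{thm3.3}, the resulting diagonal $\Phi$ is regular, degenerate, satisfies the section's spectral assumption on $\bigl[\begin{smallmatrix}0&1\\1&0\end{smallmatrix}\bigr]$, and has $\Phi(-I)=\diag(1,-1)$ non-scalar, yet $\psi_1=\psi_2$ and the $\psi_i$ are determined by $\Phi$ restricted to positive definite matrices, so no other choice of representation makes them distinct. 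In other words, the word ``distinct'' in the statement is an overclaim (only the full maps $\varphi_1\neq\varphi_2$ are forced, which already follows from $\eta_1(1)\neq\eta_2(1)$); the paper's own proof silently ignores this point, and nothing downstream (form (i) of Theorem \ref{main}) uses it. So your proof is complete and correct for everything except a clause that is not actually provable; you should either drop that clause or restrict the distinctness claim to the case where $\Phi(\lambda I)$ is non-scalar for some $\lambda>0$, where your argument via $\eta_i(2)=1$ does give $\psi_1(\lambda^2)\neq\psi_2(\lambda^2)$.
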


\noindent
\begin{Remark}
Notice that in this case we get a similar form as in the Proposition \ref{thm4.4}.
\end{Remark}

\begin{proof}
Take $\lambda \in \RR$ such that $\Phi(\lambda I)$ is not a scalar. Then we know by Lemma \ref{lema5.1.5} that
$$\Phi(A)=U \begin{bmatrix} \phi_1(A) & 0 \\ 0 & \phi_2(A) \end{bmatrix} U^*,$$
where $\phi_1,\phi_2: \s_2(\C) \to \RR$ are unital J.T.P. homomorphisms. By Proposition \ref{thm3.3}
$$\Phi(A) = U \left[
\begin{array}{cc}
\psi_1(|\det A|) \eta_1(\Syl(A)) & 0 \\
0 & \psi_2(|\det A|) \eta_2(\Syl(A)) 
\end{array} \right] U^* ,$$
with $\eta_1(2)=\eta_2(2)=1$, since $\Phi(I)=I$, and $\eta_1(1)\neq \eta_2(1)$, since
$$\sigma\left( \Phi\left( \begin{bmatrix} 0 & 1 \\ 1 & 0 \end{bmatrix} \right) \right)= \{ -1,1\},$$
which concludes the proof.
\end{proof}

If $\Phi(\lambda I)$ is a scalar matrix for every $\lambda \in \RR$, then there exists $\Psi:\RR\to \RR$ multiplicative such that $\Phi(\lambda I)=\Psi(\lambda)I$.
In the remainder of this section we assume that $\Phi(\lambda I)=\Psi(\lambda) I$. Due to regularity of $\Phi$ it holds that $\Psi(0)=0$ and $\Psi(1)=1$.

A set of matrices $\left\{ \begin{bmatrix} a & 0 \\ 0 & 1 \end{bmatrix} : \, a\in \RR^* \right\}$  is isomorphic to the group
of nonzero real numbers for multiplication, so $\Phi$ induces a J.T.P. homomorphism from $\RR^*$ to the set of invertible matrices in $\s_2(\C)$. By Lemma \ref{C_to_invertible} it then holds that
$$\Phi\left( \begin{bmatrix} a & 0 \\ 0 & 1 \end{bmatrix} \right) = U \begin{bmatrix} \alpha(a) & 0 \\ 0 & \beta(a) \end{bmatrix} U^*$$
for some unitary matrix $U$ and $\alpha, \beta: \RR^* \to \RR^*$ unital multiplicative maps.
Without the loss of generality we may assume that $$\Phi \left( \begin{bmatrix} a & 0 \\ 0 & 1 \end{bmatrix} \right) = \begin{bmatrix} \alpha(a) & 0 \\ 0 & \beta(a) \end{bmatrix}$$ with $\alpha(1)=\beta(1)=1$ and $\{\alpha(-1),\beta(-1)\} =\{-1,1\}$. We may also assume that $\alpha(-1)=-1$ and $\beta(-1)=1$, hence 
$$\Phi\left( \begin{bmatrix} -1 & 0 \\ 0 & 1 \end{bmatrix} \right)= \begin{bmatrix} -1 & 0 \\ 0 & 1 \end{bmatrix}.$$

\begin{Lemma} \label{lemma62}
Let $\Phi:\s_2(\C) \to \s_2(\C)$ be a degenerate J.T.P. homomorphism mapping scalars to scalars such that 
$\Phi\left( \begin{bmatrix} -1 & 0 \\ 0 & 1 \end{bmatrix} \right)= \begin{bmatrix} -1 & 0 \\ 0 & 1 \end{bmatrix}.$
Then
$$\Phi\left( \begin{bmatrix} 0 & 1 \\ 1 & 0 \end{bmatrix} \right)=
\begin{cases}
\pm \begin{bmatrix} 1 & 0 \\ 0 & -1 \end{bmatrix}; & \textrm{or}  \\
\begin{bmatrix} 0 & b \\ \bar{b} & 0 \end{bmatrix}; & |b|=1.
\end{cases} $$
\end{Lemma}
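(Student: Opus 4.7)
The plan is to pin down $M := \Phi\left(\begin{bmatrix} 0 & 1 \\ 1 & 0 \end{bmatrix}\right)$ by writing $-\begin{bmatrix} 0 & 1 \\ 1 & 0 \end{bmatrix}$ as a Jordan triple product in two different ways and matching the resulting expressions for $\Phi\left(-\begin{bmatrix} 0 & 1 \\ 1 & 0 \end{bmatrix}\right)$. For brevity, set $K := \begin{bmatrix} -1 & 0 \\ 0 & 1 \end{bmatrix}$, so the standing hypothesis reads $\Phi(K) = K$, and $S := \begin{bmatrix} 0 & 1 \\ 1 & 0 \end{bmatrix}$. The assumption $\sigma(M) = \{-1,1\}$ tells us that $M$ is a Hermitian involution with $\mathrm{tr}\,M = 0$ and $\det M = -1$.

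The first step is to note the two elementary identities $KSK = -S$ (direct $2\times 2$ check) and $S(-S)S = -S^3 = -S$ (because $S^2 = I$). Applying $\Phi$ to each, and using $\Phi(K) = K$, gives respectively
$$\Phi(-S) = KMK \qquad \text{and} \qquad \Phi(-S) = M \,\Phi(-S)\, M.$$
Substituting the first into the second and then multiplying on the left by $M$ (allowed since $M^2 = I$) produces the key commutation
$$M(KMK) = (KMK)M.$$

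Next I would write $M = \begin{bmatrix} a & b \\ \bar b & c \end{bmatrix}$ with $a, c \in \RR$, $b \in \C$. Then $KMK = \begin{bmatrix} a & -b \\ -\bar b & c \end{bmatrix}$, and a short expansion shows that the $(1,2)$-entry of $M(KMK) - (KMK)M$ equals $2b(c-a)$. Hence either $b = 0$ or $a = c$. In the first case, $M$ is diagonal with eigenvalues $\{-1,1\}$, so $M = \pm\begin{bmatrix} 1 & 0 \\ 0 & -1 \end{bmatrix}$. In the second case, $\mathrm{tr}\,M = 2a = 0$ forces $a = c = 0$, and then $\det M = -|b|^2 = -1$ forces $|b| = 1$, giving $M = \begin{bmatrix} 0 & b \\ \bar b & 0 \end{bmatrix}$ with $|b|=1$.

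I do not foresee a genuine obstacle; once one spots the two identities above, the rest is a brief $2\times 2$ computation and a bookkeeping step. The one subtle point is that the exact normalization $\Phi(K) = K$ (and not merely that $\Phi(K)$ is similar to $K$) is what makes the identity $KSK = -S$ transfer cleanly under $\Phi$; this is why the preceding paragraph in the section was arranged to fix that particular unitary conjugation.
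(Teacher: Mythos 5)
Your proof is correct, and it takes a genuinely different route from the paper's. The paper first pins down $\Phi\left(\begin{bmatrix} 1 & 0 \\ 0 & -1\end{bmatrix}\right)$: from $K\begin{bmatrix} 1 & 0 \\ 0 & -1\end{bmatrix}K = \begin{bmatrix} 1 & 0 \\ 0 & -1\end{bmatrix}$ it deduces that this image commutes with $K$, hence is diagonal and equals $\pm K$; it then invokes the explicit parametrization of nontrivial Hermitian involutions (Lemma \ref{involution}) to write $\Phi(S) = \begin{bmatrix} \pm\sqrt{1-|b|^2} & b \\ \bar b & \mp\sqrt{1-|b|^2}\end{bmatrix}$ and uses the identity $S K S = \begin{bmatrix} 1 & 0 \\ 0 & -1\end{bmatrix}$ to force $b=0$ or $|b|=1$ from the off-diagonal entries. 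You instead combine $KSK=-S$ with $S(-S)S=-S$ to show that $M=\Phi(S)$ commutes with $KMK$, and then read off the dichotomy from a direct entry computation on a general trace-zero Hermitian involution. Your version bypasses both Lemma \ref{involution} and the determination of $\Phi\left(\begin{bmatrix} 1 & 0 \\ 0 & -1\end{bmatrix}\right)$, so it is slightly more self-contained; the paper's version has the side benefit of recording $\Phi\left(\begin{bmatrix} 1 & 0 \\ 0 & -1\end{bmatrix}\right)=\pm K$ along the way. One small point of hygiene: the fact that $\sigma(M)=\{-1,1\}$ (hence $\operatorname{tr} M = 0$, $\det M = -1$) is not in the lemma's hypotheses but is the standing assumption announced at the start of Section 6; you use it exactly as the paper implicitly does, so this is fine, but it is worth citing explicitly.
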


\begin{proof}
First we notice that $\begin{bmatrix} -1 & 0 \\ 0 & 1 \end{bmatrix}$ and $\begin{bmatrix} 1 & 0 \\ 0 & -1 \end{bmatrix}$ are similar, hence their images are also similar by Lemma \ref{lemma21}.
From $\begin{bmatrix} -1 & 0 \\ 0 & 1 \end{bmatrix} \begin{bmatrix} 1 & 0 \\ 0 & -1 \end{bmatrix} \begin{bmatrix} -1 & 0 \\ 0 & 1 \end{bmatrix}=\begin{bmatrix} 1 & 0 \\ 0 & -1 \end{bmatrix}$
it follows that
$$\begin{bmatrix} -1 & 0 \\ 0 & 1 \end{bmatrix} \Phi \left(\begin{bmatrix} 1 & 0 \\ 0 & -1 \end{bmatrix} \right)\begin{bmatrix} -1 & 0 \\ 0 & 1 \end{bmatrix}=\Phi \left(\begin{bmatrix} 1 & 0 \\ 0 & -1 \end{bmatrix}\right).$$
So $\Phi \left(\begin{bmatrix} 1 & 0 \\ 0 & -1 \end{bmatrix} \right)$ commutes with $\begin{bmatrix} -1 & 0 \\ 0 & 1 \end{bmatrix}$, thus it is diagonal and we conclude that $\Phi\left( \begin{bmatrix} 1 & 0 \\ 0 & -1 \end{bmatrix} \right)= \pm \begin{bmatrix} -1 & 0 \\ 0 & 1 \end{bmatrix}$.

A matrix $\begin{bmatrix} 0 & 1 \\ 1 & 0 \end{bmatrix}$ is an involution, hence its image is also an involution and by Lemma \ref{involution} it has the form
$$\Phi\left( \begin{bmatrix} 0 & 1 \\ 1 & 0 \end{bmatrix} \right) = \begin{bmatrix} \pm \sqrt{1-|b|^2} & b \\ \bar{b} & \mp \sqrt{1-|b|^2}\end{bmatrix}$$
for some $b\in\C$ with $|b|\leq 1$.
Thus
\begin{align*}
\Phi \left( \begin{bmatrix} 1 & 0 \\ 0 & -1 \end{bmatrix} \right) &=  \Phi \left( \begin{bmatrix} 0 & 1 \\ 1 & 0 \end{bmatrix} \right) \Phi\left( \begin{bmatrix} -1 & 0 \\ 0 & 1 \end{bmatrix}\right) \Phi \left( \begin{bmatrix} 0 & 1 \\ 1 & 0 \end{bmatrix} \right) \\
&= \begin{bmatrix} \pm \sqrt{1-|b|^2} & b \\ \bar{b} & \mp \sqrt{1-|b|^2} \end{bmatrix} \begin{bmatrix} -1 & 0 \\ 0 & 1 \end{bmatrix} \cdot \\
& \hspace{4cm} \cdot\begin{bmatrix} \pm \sqrt{1-|b|^2} & b \\ \bar{b} & \mp \sqrt{1-|b|^2} \end{bmatrix} \\
&= \begin{bmatrix} -1+2|b|^2  & \mp 2 b \sqrt{1-|b|^2} \\
\mp 2\bar{b} \sqrt{1-|b|^2} & 1-2|b|^2\end{bmatrix} = \pm \begin{bmatrix} 1 & 0 \\ 0 & -1\end{bmatrix}.
\end{align*}
Equating off-diagonal entries implies $b=0$ or $|b|=1$, which concludes the proof.
\end{proof}

\begin{Remark}
In the first case, where $b=0$, images of involutions $\begin{bmatrix} -1 & 0 \\ 0 & 1 \end{bmatrix}$ and $\begin{bmatrix} 0 & 1 \\ 1 & 0 \end{bmatrix}$ commute. In the second case, where $|b|=1$, images don't commute. We will consider these cases in subsections 6.1 and 6.2.
\end{Remark}

\medskip

\begin{Lemma} \label{eta}
For an invertible matrix $A\in \s_2(\C)$ define
$$\eta(A)= \left\{
\begin{array}{rl}
1; & A>0 \textrm{ or } A<>0 \\
-1; & A<0
\end{array} \right. .$$
If $\Phi: \s_2(\C)\to \s_2(\C) $ is a J.T.P.  homomorphism, then so is $$\Phi'(A)=\begin{cases} \eta(A) \Phi(A); &\det A \neq 0 \\ 0; & \det A=0 \end{cases} .$$
\end{Lemma}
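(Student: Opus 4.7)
The plan is to verify the identity $\Phi'(ABA)=\Phi'(A)\Phi'(B)\Phi'(A)$ by splitting into cases according to the invertibility of $A$ and $B$, with the only nontrivial case being when both are invertible. The crucial input is Sylvester's law of inertia applied to the congruence $B\mapsto ABA$ by the Hermitian matrix $A$.

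First I would observe that since $\eta$ takes values in $\{-1,1\}$, we have $\eta(A)^2=1$ whenever $A$ is invertible. Consequently, when both $A$ and $B$ are invertible (so that $ABA$ is invertible too, as $\det(ABA)=\det(A)^2\det(B)\neq 0$), one computes
\begin{equation*}
\Phi'(A)\Phi'(B)\Phi'(A)=\eta(A)\eta(B)\eta(A)\,\Phi(A)\Phi(B)\Phi(A)=\eta(B)\,\Phi(ABA).
\end{equation*}
Hence the desired equality $\Phi'(ABA)=\eta(B)\Phi(ABA)$ reduces to the statement $\eta(ABA)=\eta(B)$.

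The key step is the verification of $\eta(ABA)=\eta(B)$. Since $A\in\s_2(\C)$ is Hermitian and invertible, the map $X\mapsto AXA=A^*XA$ is a congruence transformation. By Sylvester's law of inertia, the number of positive (and negative) eigenvalues of $ABA$ coincides with that of $B$, i.e.\ $\Syl(ABA)=\Syl(B)$. Translating this into the three definiteness classes: if $B>0$ then $ABA>0$, if $B<0$ then $ABA<0$, and if $B<>0$ then $ABA<>0$. In each case, the defining condition for $\eta$ gives $\eta(ABA)=\eta(B)$, and the invertible case is complete.

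The remaining cases are routine. If $A$ is singular, then $\det(ABA)=\det(A)^2\det(B)=0$, so $\Phi'(ABA)=0$ and $\Phi'(A)=0$, so both sides vanish. If $A$ is invertible but $B$ is singular, then $ABA$ is again singular, so $\Phi'(ABA)=0$, and $\Phi'(B)=0$ makes the right-hand side vanish as well. I do not anticipate a genuine obstacle here; the only content of the lemma is Sylvester's law together with the bookkeeping that $\eta(A)^2=1$ cancels the $A$-contribution on both sides.
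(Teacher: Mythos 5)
Your proof is correct and follows essentially the same route as the paper's: a case split on invertibility and definiteness, with the sign bookkeeping $\eta(A)^2=1$. The only difference is cosmetic --- you package the four sign cases into the single identity $\eta(ABA)=\eta(B)$ and justify it explicitly by Sylvester's law of inertia, a fact the paper's case-by-case verification uses implicitly without naming it.
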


\begin{proof}
We split the proof into several cases.
\begin{itemize}
	\item If $\det A=0$ or $\det B=0$, the equation is trivial.
	\item If $A>0$ or $A<>0$ and $B>0$ or $B<>0$, then $$\Phi'(ABA)=\Phi(ABA)=\Phi'(A) \Phi'(B) \Phi'(A).$$
	\item If $A<0$ and $B>0$ or $B<>0$, then $$\Phi'(ABA)=\Phi(ABA)=-\Phi(A)\Phi(B)(-\Phi(A))=\Phi'(A) \Phi'(B) \Phi'(A).$$
	\item If $A>0$ or $A<>0$ and $B<0$, then $$\Phi'(ABA)=-\Phi(ABA)=\Phi(A)(-\Phi(B))\Phi(A)=\Phi'(A) \Phi'(B) \Phi'(A).$$
	\item If $A<0$ and $B<0$, then $$\Phi'(ABA)=-\Phi(ABA)=-\Phi(A)(-\Phi(B))(-\Phi(A))=\Phi'(A) \Phi'(B) \Phi'(A).$$
\end{itemize}
This concludes the proof.
\end{proof}

\medskip

\subsection{Case $b=0$}

In this subsection we have the following assumptions (C1):
\begin{itemize}
	\item $\Phi:\s_2(\C) \to \s_2(\C)$ a regular J.T.P. homomorphism;
	\item $\Phi(A)=0$ for every $A \in \s_2(\C)$ with $\rank A \leq 1$;
	\item $\Phi(\lambda I) = \Psi(\lambda) I$ for some $\Psi: \RR \to \RR$ multiplicative with $\Psi(0)=0$;
	\item $\Phi \left( \begin{bmatrix} a & 0 \\ 0 & 1 \end{bmatrix} \right)= \begin{bmatrix} \alpha(a) & 0 \\ 0 & \beta(a) \end{bmatrix}$ for $\alpha, \beta: \RR \to \RR$ unital multiplicative maps with $\alpha(-1)=-1$ and $\beta(-1)=1$;
	\item $\Phi\left( \begin{bmatrix} 0 & 1 \\ 1 & 0 \end{bmatrix} \right)= \pm \begin{bmatrix} 1 & 0 \\ 0 & -1 \end{bmatrix}$.
\end{itemize}
We have
\begin{align*}
\Phi \left( \begin{bmatrix} 1 & 0 \\ 0 & a \end{bmatrix} \right) &=  \Phi \left( \begin{bmatrix} 0 & 1 \\ 1 & 0 \end{bmatrix} \right) \Phi\left( \begin{bmatrix} a & 0 \\ 0 & 1 \end{bmatrix}\right) \Phi \left( \begin{bmatrix} 0 & 1 \\ 1 & 0 \end{bmatrix} \right) \\
&= \pm \begin{bmatrix} 1 & 0 \\ 0 & -1 \end{bmatrix} \begin{bmatrix} \alpha(a) & 0 \\ 0 & \beta(a) \end{bmatrix}  \left(\pm \begin{bmatrix} 1 & 0 \\ 0 & -1 \end{bmatrix} \right) = \begin{bmatrix} \alpha(a) & 0 \\ 0 & \beta(a) \end{bmatrix}.
\end{align*}
Let $a>0$. It follows that
$$\Phi\left( \begin{bmatrix} a & 0 \\ 0 & a \end{bmatrix} \right) = \Psi(a) I = \Phi \left( \begin{bmatrix} \sqrt{a} & 0 \\ 0 & 1 \end{bmatrix} \begin{bmatrix} 1 & 0 \\ 0 & a \end{bmatrix} \begin{bmatrix} \sqrt{a} & 0 \\ 0 & 1 \end{bmatrix} \right)= \begin{bmatrix} \alpha(a)^2 & 0 \\ 0 & \beta(a)^2 \end{bmatrix}.$$
Thus $\alpha(a)^2=\beta(a)^2$, hence $\alpha(a)=\pm \beta(a)$.

\medskip

It also holds that
$$\begin{bmatrix} \alpha(a) & 0 \\ 0 & \beta(a) \end{bmatrix}= \begin{bmatrix} \alpha(\sqrt{a})^2 & 0 \\ 0 & \beta(\sqrt{a})^2 \end{bmatrix},$$
hence it follows that $\alpha(a)>0$ and $\beta(a)>0$, which in turn implies that $\alpha(a)=\beta(a)$ for all $a>0$. By initial assumptions it also holds that $\alpha(-1)=-1$ and $\beta(-1)=1$. Thus, for $a<0$, we have $\alpha(a)=-\alpha(|a|)$ and $\beta(a)=\beta(|a|)$. Hence, $\alpha(-x)=-\alpha(x)$ and $\beta(-x)=\beta(x)$ for all $x\in \RR$ with $\alpha(x)=\beta(x)>0$ for all $x>0$.

\medskip

We can conclude that
\begin{itemize}
	\item $\Phi\left( \begin{bmatrix} a & 0 \\ 0 & 1 \end{bmatrix} \right) = \begin{bmatrix} -\alpha(|a|) & 0 \\ 0 & \alpha(|a|) \end{bmatrix} = \Phi\left( \begin{bmatrix} 1 & 0 \\ 0 & a \end{bmatrix} \right)$ for $a<0$;
	\item $\Phi\left( \begin{bmatrix} a & 0 \\ 0 & b \end{bmatrix} \right) = \begin{bmatrix} \alpha(ab) & 0 \\ 0 & \alpha(ab) \end{bmatrix}$ for $a,b>0$;
	\item $\Phi\left( \begin{bmatrix} a & 0 \\ 0 & b \end{bmatrix} \right) = \begin{bmatrix} -\alpha(|ab|) & 0 \\ 0 & \alpha(|ab|) \end{bmatrix} = \begin{bmatrix} \alpha(ab) & 0 \\ 0 & \alpha(|ab|) \end{bmatrix}$ for $a<0$ and $b>0$ or $a>0$ and $b<0$.
\end{itemize}

\medskip

From $-I$ an involution and $\Phi(\lambda I)=\Psi(\lambda) I$ it follows that $\Phi(-I)=\pm I$. If $\Phi(-I)=-I$, define $\Phi'(A)=\eta(A)\Phi(A)$, so $\Phi'(-I)=I$. Hence we can assume without the loss of generality that $\Phi(-I)=I$. Thus
$$\Phi\left( \begin{bmatrix} a & 0 \\ 0 & b \end{bmatrix} \right)=\begin{bmatrix} \alpha(ab) & 0 \\ 0 & \alpha(|ab|) \end{bmatrix}$$
for every $a,b\in\RR$, which gives us the following lemma.

\begin{Lemma}
Under assumptions \textup{(C1)} and $\Phi(-I)=I$ there exists a unital odd multiplicative function $\alpha:\RR \to \RR$ with $\alpha(-1)=-1$
$$\Phi\left( \begin{bmatrix} a & 0 \\ 0 & b \end{bmatrix} \right) = \begin{bmatrix} \alpha(a b) & 0 \\ 0 & \alpha(|ab|) \end{bmatrix}$$
for all $a,b\in \RR$.
\end{Lemma}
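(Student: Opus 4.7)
The statement is essentially a bookkeeping consolidation of the three displayed bullet points preceding it into a single uniform formula. The plan is to verify that the proposed expression $\diag(\alpha(ab),\alpha(|ab|))$ agrees with each of those three cases, then supply the two cases that the bullets do not directly address: both $a,b<0$, and the rank-deficient situation $ab=0$.

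First, I would rewrite each of the three bullets in the target form. When $a,b>0$ we have $ab=|ab|$, so the bullet $\diag(\alpha(ab),\alpha(ab))$ is already in the claimed form. When $\sign a\neq \sign b$ (both nonzero), the third bullet is already in the claimed form. The case $a<0$, $b=1$ (and symmetrically $a=1$, $b<0$) from the first bullet gives
\[
\diag(-\alpha(|a|),\alpha(|a|)) \;=\; \diag(\alpha(a),\alpha(|a|)),
\]
using the already-established oddness $\alpha(-x)=-\alpha(x)$, so this matches the claim as well.

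Next, I would treat the remaining regular case $a<0,b<0$. Setting $a'=-a$, $b'=-b>0$, I use the factorization
\[
\begin{bmatrix} a & 0 \\ 0 & b \end{bmatrix} \;=\; \begin{bmatrix} \sqrt{a'} & 0 \\ 0 & \sqrt{b'} \end{bmatrix}(-I)\begin{bmatrix} \sqrt{a'} & 0 \\ 0 & \sqrt{b'} \end{bmatrix},
\]
apply the J.T.P.\ identity, substitute the positive–positive formula for $\Phi(\diag(\sqrt{a'},\sqrt{b'}))$, and use the reduction hypothesis $\Phi(-I)=I$. This gives $\Phi(\diag(a,b))=\diag(\alpha(a'b'),\alpha(a'b'))$; since $a'b'=ab>0$, this equals $\diag(\alpha(ab),\alpha(|ab|))$, as required.

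Finally, when $a=0$ or $b=0$ the matrix $\diag(a,b)$ has rank at most one, so by the degeneracy hypothesis (C1) we have $\Phi(\diag(a,b))=0$; this matches the formula once $\alpha$ is extended by $\alpha(0)=0$, a choice forced by multiplicativity and consistent with $\Psi(0)=0$. The unital, odd, multiplicative nature of $\alpha$, together with $\alpha(-1)=-1$, have all been recorded in the paragraphs preceding the statement. I expect no genuine obstacle here: the only mild subtlety is selecting the right factorization through $-I$ in the $a,b<0$ subcase, which is precisely the place where the reduction $\Phi(-I)=I$ is exploited.
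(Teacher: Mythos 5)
Your proposal is correct and follows essentially the same route as the paper: the lemma there is stated as a summary of the three preceding bullet cases, with the remaining negative--negative case handled exactly as you do, by writing $\operatorname{diag}(a,b)=\operatorname{diag}(\sqrt{-a},\sqrt{-b})\,(-I)\,\operatorname{diag}(\sqrt{-a},\sqrt{-b})$ and invoking the reduction $\Phi(-I)=I$. Your explicit treatment of the rank-deficient case $ab=0$ (forcing $\alpha(0)=0$) is a small point the paper leaves implicit, but it changes nothing of substance.
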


By our assumptions we have $\Phi \left( \begin{bmatrix} 0 & 1 \\ 1 & 0 \end{bmatrix} \right) = \pm \begin{bmatrix} 1 & 0 \\ 0 & -1 \end{bmatrix}$.
If $\Phi \left( \begin{bmatrix} 0 & 1 \\ 1 & 0 \end{bmatrix} \right) = \begin{bmatrix} -1 & 0 \\ 0 & 1 \end{bmatrix}$, then we observe that 
\begin{equation} \label{eq-1}\frac{1}{\sqrt{2}}\begin{bmatrix} 1 & 1 \\ 1 & -1 \end{bmatrix} \begin{bmatrix} 1 & 0 \\ 0 & -1 \end{bmatrix} \frac{1}{\sqrt{2}}\begin{bmatrix} 1 & 1 \\ 1 & -1 \end{bmatrix} = \begin{bmatrix} 0 & 1 \\ 1 & 0 \end{bmatrix}, \end{equation}
thus
$$\Phi\left(\frac{1}{\sqrt{2}}\begin{bmatrix} 1 & 1 \\ 1 & -1 \end{bmatrix}\right)  \begin{bmatrix} -1 & 0 \\ 0 & 1 \end{bmatrix} \Phi\left( \frac{1}{\sqrt{2}}\begin{bmatrix} 1 & 1 \\ 1 & -1 \end{bmatrix}\right) = \begin{bmatrix} -1 & 0 \\ 0 & 1 \end{bmatrix},$$
which implies that the matrices $\Phi\left(\displaystyle{{\frac{1}{\sqrt{2}}}}\begin{bmatrix} 1 & 1 \\ 1 & -1 \end{bmatrix}\right)$ and $\begin{bmatrix} -1 & 0 \\ 0 & 1 \end{bmatrix}$ commute. This means that $\Phi\left(\displaystyle\frac{1}{\sqrt{2}}\begin{bmatrix} 1 & 1 \\ 1 & -1 \end{bmatrix}\right)$ is diagonal and since it is an involution, it holds that
$$\Phi\left(\displaystyle\frac{1}{\sqrt{2}}\begin{bmatrix} 1 & 1 \\ 1 & -1 \end{bmatrix}\right)=\pm \begin{bmatrix} -1 & 0 \\ 0 & 1 \end{bmatrix}.$$

On the other hand, if $\Phi \left( \begin{bmatrix} 0 & 1 \\ 1 & 0 \end{bmatrix} \right) = \begin{bmatrix} 1 & 0 \\ 0 & -1 \end{bmatrix}$, then 
equation \eqref{eq-1} still holds, hence
$$\Phi\left(\frac{1}{\sqrt{2}}\begin{bmatrix} 1 & 1 \\ 1 & -1 \end{bmatrix}\right)  \begin{bmatrix} -1 & 0 \\ 0 & 1 \end{bmatrix} \Phi\left( \frac{1}{\sqrt{2}}\begin{bmatrix} 1 & 1 \\ 1 & -1 \end{bmatrix}\right) = \begin{bmatrix} 1 & 0 \\ 0 & -1 \end{bmatrix},$$
which implies that the matrix $\Phi\left(\displaystyle\frac{1}{\sqrt{2}}\begin{bmatrix} 1 & 1 \\ 1 & -1 \end{bmatrix}\right)$ has the form $\begin{bmatrix} 0 & b \\ \bar{b} & 0 \end{bmatrix}$. Since $\Phi\left(\displaystyle\frac{1}{\sqrt{2}}\begin{bmatrix} 1 & 1 \\ 1 & -1 \end{bmatrix}\right)$ is an involution, it must be that $|b|=1$.

Denote $S=\displaystyle\frac{1}{\sqrt{2}}\begin{bmatrix} 1 & 1 \\ 1 & -1 \end{bmatrix}$. A calculation shows that
$$ S = \begin{bmatrix} 1 & 0 \\ 0 & -\frac{1}{3} \end{bmatrix} S \begin{bmatrix} 1 & 0 \\ 0 & \frac{1}{\sqrt{7}} \end{bmatrix} S \begin{bmatrix} \frac{-7+5 \sqrt{7}}{\sqrt{2}} & 0 \\ 0 & \frac{-7-5 \sqrt{7}}{\sqrt{2}} \end{bmatrix} S \begin{bmatrix} 1 & 0 \\ 0 & \frac{1}{\sqrt{7}} \end{bmatrix} S \begin{bmatrix} 1 & 0 \\ 0 & -\frac{1}{3} \end{bmatrix}. $$
Using $\Phi$, we get
\begin{align*}
\Phi(S) &= \begin{bmatrix} 0 & b \\ \bar{b} & 0 \end{bmatrix} \\
&= \begin{bmatrix} -\alpha(\frac{1}{3}) & 0 \\ 0 & \alpha(\frac{1}{3}) \end{bmatrix} \begin{bmatrix} 0 & b \\ \bar{b} & 0 \end{bmatrix} \begin{bmatrix} \alpha(\frac{1}{\sqrt{7}}) & 0 \\ 0 & \alpha(\frac{1}{\sqrt{7}}) \end{bmatrix} \begin{bmatrix} 0 & b \\ \bar{b} & 0 \end{bmatrix}\begin{bmatrix} -\alpha(63) & 0 \\ 0 & \alpha(63) \end{bmatrix}  \cdot \\
&  \hspace{1.2cm} \cdot \begin{bmatrix} 0 & b \\ \bar{b} & 0 \end{bmatrix} \begin{bmatrix} \alpha(\frac{1}{\sqrt{7}}) & 0 \\ 0 & \alpha(\frac{1}{\sqrt{7}}) \end{bmatrix} \begin{bmatrix} 0 & b \\ \bar{b} & 0 \end{bmatrix}\begin{bmatrix} -\alpha(\frac{1}{3}) & 0 \\ 0 & \alpha(\frac{1}{3}) \end{bmatrix}= \begin{bmatrix} -1 & 0 \\ 0 & 1 \end{bmatrix},
\end{align*}
which is a contradiction. Thus we have showed that
$$\Phi\left(\frac{1}{\sqrt{2}}\begin{bmatrix} 1 & 1 \\ 1 & -1 \end{bmatrix}\right)=\pm \begin{bmatrix} -1 & 0 \\ 0 & 1 \end{bmatrix}.$$

This enables us to prove the following lemma.

\begin{Lemma}
Under assumptions \textup{(C1)} and $\Phi(-I)=I$ we have
$$\Phi\left( \begin{bmatrix} a & b \\ b & a \end{bmatrix} \right) = \begin{bmatrix} \alpha(a^2-b^2) & 0 \\ 0 & \alpha(|a^2-b^2|) \end{bmatrix} \qquad \textrm{for all } a,b\in\RR.$$
\end{Lemma}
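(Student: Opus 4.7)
The plan is to exploit the fact that the Hermitian involution $S=\frac{1}{\sqrt{2}}\begin{bmatrix} 1 & 1 \\ 1 & -1\end{bmatrix}$ simultaneously diagonalizes every matrix of the form $\begin{bmatrix} a & b \\ b & a\end{bmatrix}$. A direct computation gives
$$\begin{bmatrix} a & b \\ b & a \end{bmatrix} = S \begin{bmatrix} a+b & 0 \\ 0 & a-b \end{bmatrix} S,$$
so applying $\Phi$ and using the Jordan triple product identity yields
$$\Phi\left(\begin{bmatrix} a & b \\ b & a \end{bmatrix}\right) = \Phi(S)\, \Phi\left(\begin{bmatrix} a+b & 0 \\ 0 & a-b \end{bmatrix}\right) \Phi(S).$$

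The preceding discussion in the text established that, under assumptions (C1) together with $\Phi(-I)=I$, we have $\Phi(S)=\pm\begin{bmatrix} -1 & 0 \\ 0 & 1\end{bmatrix}$. In either case $\Phi(S)$ is diagonal with entries $\pm 1$, so conjugation by $\Phi(S)$ acts as the identity on any diagonal matrix. Combined with the formula $\Phi\!\left(\begin{bmatrix} x & 0 \\ 0 & y\end{bmatrix}\right)=\begin{bmatrix} \alpha(xy) & 0 \\ 0 & \alpha(|xy|)\end{bmatrix}$ proved just before the lemma, and the algebraic identity $(a+b)(a-b)=a^2-b^2$, this immediately gives the desired form.

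The only case needing separate comment is $a^2-b^2=0$, where the matrix $\begin{bmatrix} a & b \\ b & a\end{bmatrix}$ has rank at most one and is therefore sent to zero by the degeneracy hypothesis in (C1); this matches the right-hand side, since the extension $\alpha(0)=0$ (forced by $\Phi$ vanishing on rank-one diagonal matrices) makes both diagonal entries vanish. I do not expect any real obstacle here: the nontrivial input, namely the identification of $\Phi(S)$ up to sign, was done in the paragraph preceding the lemma, so the lemma itself is a short similarity argument.
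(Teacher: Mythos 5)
Your proof is correct and follows essentially the same route as the paper: the identity $\begin{bmatrix} a & b \\ b & a\end{bmatrix} = S\begin{bmatrix} a+b & 0 \\ 0 & a-b\end{bmatrix}S$ with $S=\frac{1}{\sqrt 2}\begin{bmatrix}1 & 1\\ 1 & -1\end{bmatrix}$, the previously established fact $\Phi(S)=\pm\begin{bmatrix}-1 & 0\\ 0 & 1\end{bmatrix}$, and the formula for images of diagonal matrices. Your extra remark on the singular case $a^2-b^2=0$ is a harmless addition that the paper leaves implicit.
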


\begin{proof}
Decompose
\begin{align*}
\Phi\left( \begin{bmatrix} a & b \\ b & a \end{bmatrix} \right) &= \Phi\left(\frac{1}{\sqrt{2}}\begin{bmatrix} 1 & 1 \\ 1 & -1 \end{bmatrix}\right) \Phi\left( \begin{bmatrix} a+b & 0 \\ 0 & a-b \end{bmatrix} \right) \Phi\left(\frac{1}{\sqrt{2}}\begin{bmatrix} 1 & 1 \\ 1 & -1 \end{bmatrix}\right) \\
&= \pm \begin{bmatrix} -1 & 0 \\ 0 & 1 \end{bmatrix} \begin{bmatrix} \alpha(a^2-b^2) & 0 \\ 0 & \alpha(|a^2-b^2|) \end{bmatrix} \left( \pm \begin{bmatrix} -1 & 0 \\ 0 & 1 \end{bmatrix} \right) \\
&= \begin{bmatrix} \alpha(a^2-b^2) & 0 \\ 0 & \alpha(|a^2-b^2|) \end{bmatrix}
\end{align*}
for all $a, b\in\RR$.
\end{proof}

\begin{Lemma}
Under assumptions \textup{(C1)} and $\Phi(-I)=I$ we have
$$\Phi\left( \begin{bmatrix} x & y \\ y & z \end{bmatrix} \right) = \begin{bmatrix} \alpha(xz-y^2) & 0 \\ 0 & \alpha(|xz-y^2|) \end{bmatrix}$$
for  all $x,y,z\in \RR$ with $xz>0$.
\end{Lemma}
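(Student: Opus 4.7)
The plan is to reduce to the symmetric case of the previous lemma by sandwiching $A=\begin{bmatrix}x & y\\ y & z\end{bmatrix}$ between two copies of a positive-diagonal matrix. Concretely, when $x,z>0$ one has the factorisation
\[
\begin{bmatrix} x & y \\ y & z \end{bmatrix}
= \begin{bmatrix} \sqrt{x} & 0 \\ 0 & \sqrt{z} \end{bmatrix}
\begin{bmatrix} 1 & y/\sqrt{xz} \\ y/\sqrt{xz} & 1 \end{bmatrix}
\begin{bmatrix} \sqrt{x} & 0 \\ 0 & \sqrt{z} \end{bmatrix},
\]
and when $x,z<0$ the analogous identity with $P=\diag(\sqrt{-x},\sqrt{-z})$ and middle matrix $\begin{bmatrix}-1 & y/\sqrt{xz}\\ y/\sqrt{xz} & -1\end{bmatrix}$, both verified by direct multiplication (the assumption $xz>0$ makes $\sqrt{xz}$ a positive real number in either case).

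I would then apply $\Phi$ to the decomposition $A=PMP$. Since $P$ has positive diagonal entries whose product equals $\sqrt{xz}>0$, the earlier formula for $\Phi$ on diagonal matrices collapses to $\Phi(P)=\alpha(\sqrt{xz})\,I$. The previous lemma, applied to $M$, gives
\[
\Phi(M) = \begin{bmatrix} \alpha\!\left(\tfrac{xz-y^2}{xz}\right) & 0 \\ 0 & \alpha\!\left(\tfrac{|xz-y^2|}{xz}\right) \end{bmatrix},
\]
and crucially this answer is the same whether the middle factor has $+1$ or $-1$ on its diagonal, since the lemma's formula depends only on $(\pm 1)^2 - (y/\sqrt{xz})^2 = 1-y^2/(xz)$. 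Thus both sign cases are handled uniformly.

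Combining, $\Phi(A)=\Phi(P)\Phi(M)\Phi(P)=\alpha(\sqrt{xz})^2\Phi(M)=\alpha(xz)\Phi(M)$, and multiplicativity of $\alpha$ (with $\alpha(xz)>0$) collapses the two diagonal entries to $\alpha(xz-y^2)$ and $\alpha(|xz-y^2|)$ respectively, which is the claim. The edge case $xz=y^2$ is consistent automatically: then $\rank A\le 1$, so $\Phi(A)=0$ by degeneracy, and the right-hand side vanishes because $\alpha(0)=0$. The only real subtlety I would sanity-check is the Case 2 decomposition: one must confirm that pushing all the ``negativity'' of $A$ into the middle factor (via $-1$'s on its diagonal) rather than into $P$ is both legitimate and necessary, since keeping $\Phi(P)$ a \emph{positive} scalar matrix is precisely what lets the previous lemma close the argument without introducing extra sign factors from $\Psi$ or from $\eta$.
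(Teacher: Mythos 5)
Your proof is correct and follows essentially the same strategy as the paper: sandwich the matrix between positive diagonal factors to reduce to the equal-diagonal case of the previous lemma, then recombine using multiplicativity of $\alpha$. The only (cosmetic) difference is the choice of normalization --- the paper uses $P=\diag\bigl(1,\sqrt{z/x}\bigr)$ with middle factor having both diagonal entries equal to $x$, which handles $x,z>0$ and $x,z<0$ uniformly without your two-case split.
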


\begin{proof}
Decompose the matrix $ \begin{bmatrix} x & y \\ y & z \end{bmatrix}$ in the following manner:
\begin{align*}
\Phi\left( \begin{bmatrix} x & y \\ y & z \end{bmatrix} \right) &= \Phi\left( \begin{bmatrix} 1 & 0 \\ 0 & \sqrt{\frac{z}{x}} \end{bmatrix} \right) \Phi\left( \begin{bmatrix} x & y\sqrt{\frac{x}{z}} \\ y\sqrt{\frac{x}{z}} & x \end{bmatrix} \right) \Phi\left( \begin{bmatrix} 1 & 0 \\ 0 & \sqrt{\frac{z}{x}} \end{bmatrix} \right) \\
&= \begin{bmatrix} \alpha(\sqrt{\frac{z}{x}}) & 0 \\ 0 & \alpha(\sqrt{\frac{z}{x}}) \end{bmatrix}
 \begin{bmatrix} \alpha(x^2-y^2 \frac{x}{z}) & 0 \\ 0 & \alpha\left( \left| x^2- y^2 \frac{x}{z}\right|\right) \end{bmatrix} \cdot\\
& \hspace{6cm} \cdot   \begin{bmatrix} \alpha(\sqrt{\frac{z}{x}}) & 0 \\ 0 & \alpha(\sqrt{\frac{z}{x}}) \end{bmatrix} \\
&= \begin{bmatrix} \alpha(xz-y^2) & 0 \\ 0 & \alpha(| xz-y^2 |) \end{bmatrix},
\end{align*}
which concludes the proof.
\end{proof}

\begin{Lemma}
Under assumptions \textup{(C1)} and $\Phi(-I)=I$ we have $$\Phi\left( \begin{bmatrix} \pm\sqrt{1-a^2} & a \\ a & \mp\sqrt{1-a^2} \end{bmatrix} \right)= \begin{bmatrix} -1 & 0 \\ 0 & 1 \end{bmatrix}$$
for every $a\in (0,1)$.
\end{Lemma}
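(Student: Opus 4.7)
Denote the two involutions appearing in the claim by $M_+^{(a)} := \begin{bmatrix} \sqrt{1-a^2} & a \\ a & -\sqrt{1-a^2}\end{bmatrix}$ and $M_-^{(a)} := \begin{bmatrix} -\sqrt{1-a^2} & a \\ a & \sqrt{1-a^2}\end{bmatrix}$. The plan is to first show that $\Phi(M_+^{(c)})$ must be a diagonal $\pm 1$ matrix, by probing with a suitable symmetric test matrix, and then to pin down the exact value by applying the JTP identity to $M_+^{(c)}\diag(1,-1)M_+^{(c)}$. Since $(M_+^{(c)})^2 = I$, Lemma \ref{involution} guarantees that $\Phi(M_+^{(c)})$ is a Hermitian involution $\begin{bmatrix} p & q \\ \bar q & r\end{bmatrix}$ with $p^2 + |q|^2 = r^2 + |q|^2 = 1$.

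For the restriction, fix $c \in (0,1)\setminus\{1/\sqrt{2}\}$ and take $B_t := \begin{bmatrix} 1 & t \\ t & 1 \end{bmatrix}$ for real $t$. A direct multiplication yields
$$M_+^{(c)} B_t M_+^{(c)} = \begin{bmatrix} 1 + 2ct\sqrt{1-c^2} & (2c^2-1)t \\ (2c^2-1)t & 1 - 2ct\sqrt{1-c^2}\end{bmatrix},$$
a symmetric matrix with diagonal product $1-4c^2t^2(1-c^2)$ and determinant $1-t^2$. Choose $t$ with $1 < t^2 < 1/(4c^2(1-c^2))$, a nonempty range precisely because $c \ne 1/\sqrt{2}$. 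Then $\Phi(M_+^{(c)} B_t M_+^{(c)})$ (by the previous lemma, since $xz > 0$) and $\Phi(B_t)$ (by the earlier symmetric matrix lemma) both equal $\alpha(t^2-1)\diag(-1,1)$, and $\alpha(t^2-1) > 0$. Feeding this into the JTP identity and cancelling the scalar gives $\Phi(M_+^{(c)})\diag(-1,1)\Phi(M_+^{(c)}) = \diag(-1,1)$; expanding and comparing entries, combined with the involutive relations, force $q = 0$ and $p,r \in \{-1,1\}$.

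For the identification, direct computation yields
$$M_+^{(c)} \diag(1,-1) M_+^{(c)} = \begin{bmatrix} 1-2c^2 & 2c\sqrt{1-c^2} \\ 2c\sqrt{1-c^2} & 2c^2-1\end{bmatrix},$$
which equals $M_+^{(d)}$ for $c \in (0,1/\sqrt{2})$ and $M_-^{(d)}$ for $c \in (1/\sqrt{2},1)$, where $d := 2c\sqrt{1-c^2}$. The value $\Phi(\diag(1,-1)) = \diag(-1,1)$ follows from the similarity $\diag(1,-1) = \begin{bmatrix} 0 & 1 \\ 1 & 0 \end{bmatrix} \diag(-1,1) \begin{bmatrix} 0 & 1 \\ 1 & 0 \end{bmatrix}$ together with the established values of $\Phi$ on the two right-hand factors. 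Combined with the restriction,
$$\Phi(M_\pm^{(d)}) = \diag(p,r)\diag(-1,1)\diag(p,r) = \diag(-p^2, r^2) = \diag(-1,1),$$
independently of the signs of $p$ and $r$. As $c$ varies over each of the intervals $(0,1/\sqrt{2})$ and $(1/\sqrt{2},1)$, the map $c \mapsto 2c\sqrt{1-c^2}$ sweeps bijectively onto $(0,1)$, so we obtain $\Phi(M_+^{(a)}) = \Phi(M_-^{(a)}) = \diag(-1,1)$ for every $a \in (0,1)$. The main technical obstacle is the restriction step: locating a test matrix $B$ so that $M_+^{(c)} B M_+^{(c)}$ stays in the domain ($xz > 0$) where $\Phi$ is already known, while still producing a non-trivial constraint on $\Phi(M_+^{(c)})$.
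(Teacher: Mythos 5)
Your proof is correct, but it takes a genuinely different route from the paper's. The paper produces a single explicit factorization of the target involution itself: it writes $\begin{bmatrix} \pm\sqrt{1-a^2} & a \\ a & \mp\sqrt{1-a^2}\end{bmatrix}=ABA$ with $A=\frac{1}{a^2}\begin{bmatrix} a^2-1 & \sqrt{1-a^2} \\ \sqrt{1-a^2} & a^2-1\end{bmatrix}$ and a similarly explicit $B$, both real symmetric with diagonal entries of equal sign, so that the two preceding lemmas give $\Phi(A)$ and $\Phi(B)$ outright as $\diag(-\alpha(\cdot),\alpha(\cdot))$ with determinants that are mutually reciprocal; one multiplication then yields $\diag(-1,1)$. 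You instead run the Jordan identity in the opposite direction: you conjugate known matrices $B_t$ \emph{by} the unknown involution $M_+^{(c)}$, choose $t$ so that the conjugate stays in the region $xz>0$ where $\Phi$ is determined while $\det B_t<0$ keeps $\Phi(B_t)$ nonscalar, and deduce that $\Phi(M_+^{(c)})$ commutes with $\diag(-1,1)$ and hence is $\diag(\pm1,\pm1)$; then the identity $M_+^{(c)}\diag(1,-1)M_+^{(c)}=M_\pm^{(d)}$ with $d=2c\sqrt{1-c^2}$ transfers this to the exact value $\diag(-1,1)$ for every $d\in(0,1)$, the sign ambiguities cancelling in $\diag(-p^2,r^2)$. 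Both arguments are sound; the paper's buys brevity at the price of having to guess a rather nonobvious decomposition, while yours is longer but more systematic (the commutation step needs no cleverness beyond locating the admissible window $1<t^2<1/(4c^2(1-c^2))$, and the exceptional value $c=1/\sqrt2$ is harmlessly bypassed because each $d\in(0,1)$ has preimages in both $(0,1/\sqrt2)$ and $(1/\sqrt2,1)$). The only cosmetic remark is that your restriction step does not exclude $\Phi(M_+^{(c)})=\pm I$, but this does not matter since diagonality with $\pm1$ entries is all the identification step uses.
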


\begin{proof}
For every $a\in (0,1)$ it holds that
$\begin{bmatrix} \pm\sqrt{1-a^2} & a \\ a & \mp\sqrt{1-a^2} \end{bmatrix} = ABA,$ where $$A= \frac{1}{a^2} \begin{bmatrix} a^2-1 & \sqrt{1-a^2} \\ \sqrt{1-a^2} & a^2-1 \end{bmatrix}$$ and $$B= \frac{a}{1-a^2} \begin{bmatrix} (2\mp a) \sqrt{1-a^2} & 2-a^2 \\ 2-a^2 & (2\pm a) \sqrt{1-a^2} \end{bmatrix}.$$
Diagonal entries of $A$ and $B$ have the same sign, so
$$\Phi(A)= \begin{bmatrix} -\alpha ( \frac{1-a^2}{a^2} ) & 0 \\ 0 & \alpha ( \frac{1-a^2}{a^2}  )\end{bmatrix} \quad \textrm{and} \quad
\Phi(B)= \begin{bmatrix} -\alpha \left(\frac{a^4}{(1-a^2)^2}\right) & 0 \\ 0 & \alpha \left(\frac{a^4}{(1-a^2)^2}\right) \end{bmatrix}.$$
Hence
$$\Phi\left( \begin{bmatrix} \pm\sqrt{1-a^2} & a \\ a & \mp\sqrt{1-a^2} \end{bmatrix} \right)=  \begin{bmatrix} -1 & 0 \\ 0 & 1 \end{bmatrix},$$
which concludes the proof.
\end{proof}

Now take $\lambda \in \Gamma$. Then
$$\begin{bmatrix} 0 & \lambda \\ \bar{\lambda}& 0 \end{bmatrix} \begin{bmatrix} \pm \sqrt{1-a^2} & a \\ a & \mp \sqrt{1-a^2} \end{bmatrix} \begin{bmatrix} 0 & \lambda \\ \bar{\lambda} & 0 \end{bmatrix} = \begin{bmatrix} \mp \sqrt{1-a^2} & a \lambda^2 \\ a \bar{\lambda}^2 & \pm \sqrt{1-a^2} \end{bmatrix}.$$
It also holds that
$$ \begin{bmatrix} 0 & \lambda \\ \bar{\lambda}& 0 \end{bmatrix} \begin{bmatrix} -1 & 0 \\ 0 & 1 \end{bmatrix} \begin{bmatrix} 0 & \lambda \\ \bar{\lambda}& 0 \end{bmatrix}=\begin{bmatrix} 1 & 0 \\ 0 & -1 \end{bmatrix},$$
hence
$$\Phi\left( \begin{bmatrix} 0 & \lambda \\ \bar{\lambda}& 0 \end{bmatrix}\right) \begin{bmatrix} -1 & 0 \\ 0 & 1 \end{bmatrix} \Phi\left( \begin{bmatrix} 0 & \lambda \\ \bar{\lambda}& 0 \end{bmatrix}\right)=\begin{bmatrix} -1 & 0 \\ 0 & 1 \end{bmatrix},$$
which implies that
$$\Phi\left( \begin{bmatrix} 0 & \lambda \\ \bar{\lambda}& 0 \end{bmatrix}\right)=\pm \begin{bmatrix} -1 & 0 \\ 0 & 1 \end{bmatrix}.$$
We conclude that
$$ \Phi\left( \begin{bmatrix} \pm\sqrt{1-a^2} & a\lambda^2  \\ a\bar{\lambda}^2 & \mp\sqrt{1-a^2} \end{bmatrix} \right)= \begin{bmatrix} -1 & 0 \\ 0 & 1 \end{bmatrix}.$$

This amounts to the following lemma.
\begin{Lemma}
Under assumptions \textup{(C1)} and $\Phi(-I)=I$
$\Phi$ maps every involution into a diagonal involution.
\end{Lemma}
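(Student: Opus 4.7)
The plan is to invoke Lemma~\ref{involution} to enumerate all Hermitian involutions and then verify case by case that $\Phi$ sends each one to a diagonal matrix. By Lemma~\ref{involution}, any Hermitian involution is either $\pm I$ or of the form
$$A=\begin{bmatrix} \pm\sqrt{1-|a|^2} & a \\ \bar{a} & \mp\sqrt{1-|a|^2} \end{bmatrix},\qquad a\in\C,\ |a|\le 1.$$
The diagonal Hermitian involutions in $\s_2(\C)$ are precisely the four matrices $\pm I$ and $\pm\begin{bmatrix}-1&0\\0&1\end{bmatrix}$, so it suffices to show that every Hermitian involution is mapped into this four-element set.

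First I would dispose of the easy cases. The matrices $\pm I$ are sent to $I$ by regularity ($\Phi(I)=I$) and by the current assumption $\Phi(-I)=I$. The matrices $\pm\begin{bmatrix}1&0\\0&-1\end{bmatrix}$, corresponding to $a=0$, go to $\pm\begin{bmatrix}-1&0\\0&1\end{bmatrix}$ by the normalisation built into (C1) together with the computation in Lemma~\ref{lemma62}. When $|a|=1$ the Hermitian involution equals $\begin{bmatrix}0&\lambda\\\bar{\lambda}&0\end{bmatrix}$ for some $\lambda\in\Gamma$, and the conjugation identity performed immediately before the lemma statement shows that its image is $\pm\begin{bmatrix}-1&0\\0&1\end{bmatrix}$.

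The only remaining case is $0<|a|<1$, and here the key trick is a polar-form reduction. Writing $a=|a|e^{i\phi}$ and setting $\lambda=e^{i\phi/2}\in\Gamma$ so that $a=|a|\lambda^2$, the displayed identity at the very end of the preceding discussion,
$$\Phi\left(\begin{bmatrix} \pm\sqrt{1-|a|^2} & |a|\lambda^2 \\ |a|\bar{\lambda}^2 & \mp\sqrt{1-|a|^2} \end{bmatrix}\right)=\begin{bmatrix}-1 & 0\\ 0 & 1\end{bmatrix},$$
handles this case as well. All four subcases land in the set of diagonal involutions, which proves the lemma. There is no real obstacle: every ingredient has been prepared in the preceding chain of lemmas, and the statement is essentially a packaging step that, together with Lemma~\ref{lemma21}, will allow subsequent arguments to assume the involutions appearing in the decomposition $A=BDB$ have diagonal image.
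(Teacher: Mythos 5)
Your proposal is correct and matches the paper's intent exactly: the paper gives no separate proof of this lemma, presenting it as a summary of the immediately preceding computations, and your case analysis via Lemma~\ref{involution} (treating $\pm I$, the diagonal involutions $a=0$, the off-diagonal involutions $|a|=1$, and the polar-form reduction $a=|a|\lambda^2$ for $0<|a|<1$) is precisely the packaging the authors have in mind. No gaps.
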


Now take an arbitrary $A\in \s_2(\C)$. We know by Lemma \ref{lemma21} that $A=BDB$ for some involution $B$ and some diagonal matrix $D$. Hence
$$\Phi(A)= \pm \begin{bmatrix} -1 & 0 \\ 0 & 1 \end{bmatrix} \Phi(D) \left( \pm \begin{bmatrix} -1 & 0 \\ 0 & 1 \end{bmatrix} \right)= \Phi(D)=\begin{bmatrix} \alpha(\det A) & 0 \\ 0 & \alpha(|\det A|)\end{bmatrix}.$$

Case $b=0$ amounts to the following proposition.

\begin{Proposition} \label{thm6.9}
Let $\Phi: \s_2(\C) \to \s_2(\C)$ be a regular J.T.P. homomorphism such that
\begin{itemize}
	\item $\Phi(A)=0$ for every $A\in \s_2(\C)$ with $\rank A \leq 1$;
	\item $\Phi$ maps scalars to scalars;
	\item images of $\begin{bmatrix} 0 & 1 \\ 1 & 0 \end{bmatrix}$ and $\begin{bmatrix} -1 & 0 \\ 0 & 1 \end{bmatrix}$ commute.
\end{itemize}
Then there exist a unitary matrix $U$ and $\alpha: \RR \to \RR$ a unital multiplicative map with $\alpha(-1)=-1$ such that
$$\Phi(A)= U \begin{bmatrix} \alpha(\det A) & 0 \\ 0 & \alpha(|\det A|) \end{bmatrix} U^* \qquad \textrm{for every } A\in \s_2(\C)$$
or
$$\Phi(A)=\eta(A)  U \begin{bmatrix} \alpha(\det A) & 0 \\ 0 & \alpha(|\det A|) \end{bmatrix} U^* \qquad \textrm{for every } A\in \s_2(\C)$$
where $\eta$ is the function defined in Lemma \ref{eta}.
\end{Proposition}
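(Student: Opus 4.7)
The plan is to assemble the chain of lemmas already developed in this subsection under assumptions (C1) together with Lemma \ref{eta}, and then to leverage the decomposition of an arbitrary Hermitian matrix via a Hermitian involution (Lemma \ref{lemma21}) to cover every $A \in \s_2(\C)$.

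First I would absorb the normalizations made earlier in the subsection into a single unitary conjugation. The passage from an arbitrary regular degenerate $\Phi$ satisfying the three bullet-point hypotheses to the configuration of assumptions (C1) uses only: diagonalization of $\Phi\bigl(\begin{bmatrix} a & 0 \\ 0 & 1 \end{bmatrix}\bigr)$ (yielding a unitary), a possible swap of the two diagonal entries (a permutation), and the commutativity hypothesis combined with Lemma \ref{lemma62} to force the first branch $\Phi\bigl(\begin{bmatrix} 0 & 1 \\ 1 & 0 \end{bmatrix}\bigr)=\pm\begin{bmatrix} 1 & 0 \\ 0 & -1 \end{bmatrix}$. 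All of these can be absorbed into a single unitary $U$, which is precisely the matrix appearing in the statement.

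Next I would split according to the sign of $\Phi(-I)$. Since $-I$ is a scalar involution and $\Phi$ sends scalars to scalars, $\Phi(-I)\in\{I,-I\}$. If $\Phi(-I)=I$, the chain of lemmas in this subsection applies verbatim, culminating in the fact that every Hermitian involution is sent to $\pm\begin{bmatrix} -1 & 0 \\ 0 & 1 \end{bmatrix}$. If instead $\Phi(-I)=-I$, I would invoke Lemma \ref{eta} to pass to $\Phi'(A)=\eta(A)\Phi(A)$, which is still a J.T.P.\ homomorphism and satisfies $\Phi'(-I)=I$; the chain then applies to $\Phi'$. These two alternatives correspond exactly to the two forms listed in the conclusion of the proposition.

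Finally, given any $A\in\s_2(\C)$, Lemma \ref{lemma21} yields a decomposition $A=BDB$ with $B$ a Hermitian involution and $D$ a real diagonal matrix. By the preceding lemma of the subsection, $\Phi(B)=\pm\begin{bmatrix} -1 & 0 \\ 0 & 1 \end{bmatrix}$, a diagonal involution that commutes with any diagonal matrix, so
$$\Phi(A)=\Phi(B)\Phi(D)\Phi(B)=\Phi(D)=\begin{bmatrix} \alpha(\det A) & 0 \\ 0 & \alpha(|\det A|)\end{bmatrix},$$
using $\det A=\det D$ and the formula for $\Phi$ on diagonals derived above. Reinserting the unitary $U$ and, in the $\Phi(-I)=-I$ branch, the scalar correction $\eta(A)$, yields the two stated forms. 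The genuinely difficult step in this entire line of argument was the lemma forcing every Hermitian involution into the diagonal form $\pm\begin{bmatrix} -1 & 0 \\ 0 & 1 \end{bmatrix}$; once that is in hand, the present proposition is essentially bookkeeping assembled from the $A=BDB$ decomposition.
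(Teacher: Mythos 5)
Your proposal is correct and follows essentially the same route as the paper: reduce to the normalized configuration (C1) via a single unitary, use Lemma \ref{eta} to handle the $\Phi(-I)=-I$ branch, rely on the lemma forcing every Hermitian involution to a diagonal involution, and finish with the $A=BDB$ decomposition from Lemma \ref{lemma21}. The paper's proof of the proposition is exactly this assembly of the subsection's lemmas, so nothing further is needed.
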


This case is covered by the form (i) of Theorem \ref{main}.

\subsection{Case $|b|=1$}

In this subsection we consider $\Phi$ as in Lemma \ref{lemma62} such that $\Phi\left( \begin{bmatrix} 0 & 1 \\ 1 & 0 \end{bmatrix} \right)=\begin{bmatrix} 0 & b \\ \bar{b} & 0 \end{bmatrix}$ for some $b\in \Gamma$.

\begin{Lemma}
Let $\Phi:\s_2(\C) \to \s_2(\C)$ be a degenerate J.T.P. homomorphism mapping scalars to scalars such that 
$\Phi$ maps $\begin{bmatrix} a & 0 \\ 0 & 1 \end{bmatrix}$ to a diagonal matrix for every $a\in\RR$.
Then there exists a unitary matrix $U$ such that $\Phi\left( \begin{bmatrix} 0 & 1 \\ 1 & 0 \end{bmatrix} \right)=U\begin{bmatrix} 0 & 1 \\ 1 & 0 \end{bmatrix} U^*$.
\end{Lemma}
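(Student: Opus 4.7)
The plan is to exhibit such a unitary $U$ explicitly rather than invoke an abstract similarity argument, so as to keep the previously established normalizations on diagonal matrices intact.

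From the preceding discussion we know $\Phi\!\left(\begin{bmatrix}0&1\\1&0\end{bmatrix}\right)=\begin{bmatrix}0&b\\\bar b&0\end{bmatrix}$ with $|b|=1$. Write $b=e^{i\theta}$ with $\theta\in[0,2\pi)$, and set
$$U=\begin{bmatrix} e^{i\theta/2} & 0 \\ 0 & e^{-i\theta/2}\end{bmatrix},$$
which is clearly unitary. A direct matrix multiplication then gives
$$U\begin{bmatrix}0&1\\1&0\end{bmatrix}U^{*}=\begin{bmatrix}0&e^{i\theta}\\ e^{-i\theta}&0\end{bmatrix}=\begin{bmatrix}0&b\\\bar b&0\end{bmatrix},$$
which is the desired identity. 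The verification is a one-line computation, so there is no serious obstacle.

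The only conceptual content worth emphasising is the choice of a \emph{diagonal} unitary $U$. This choice is important because later in the argument one wishes to replace $\Phi$ by the J.T.P. homomorphism $A\mapsto U^{*}\Phi(A)U$, and a diagonal $U$ commutes with every diagonal matrix. In particular, the earlier normalisation
$$\Phi\!\left(\begin{bmatrix} a & 0 \\ 0 & 1 \end{bmatrix}\right)=\begin{bmatrix}\alpha(a)&0\\0&\beta(a)\end{bmatrix},$$
the hypothesis that $\Phi$ maps scalars to scalars, and the image of $-I$ are all preserved under conjugation by this $U$, so no previous structure is lost. Alternatively, one could simply note that both $\begin{bmatrix}0&1\\1&0\end{bmatrix}$ and $\begin{bmatrix}0&b\\\bar b&0\end{bmatrix}$ are Hermitian with spectrum $\{-1,1\}$ and hence unitarily similar by the spectral theorem; the explicit diagonal $U$ above is the natural witness of that similarity.
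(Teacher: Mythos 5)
Your proof is correct and takes essentially the same approach as the paper: both exhibit an explicit unitary realizing the similarity and verify it by direct multiplication (the paper picks the anti-diagonal Hermitian unitary $U=\left[\begin{smallmatrix}0 & e^{i\theta/2}\\ e^{-i\theta/2} & 0\end{smallmatrix}\right]$, you pick the diagonal one). Your diagonal choice is, if anything, slightly tidier for the step that follows the lemma, since conjugation by a diagonal unitary leaves all the previously normalized images of diagonal matrices untouched, whereas the paper's anti-diagonal $U$ swaps their diagonal entries and forces a further renormalization.
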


\begin{proof}
Since $|b|=1$, it can be written as $b=e^{i \phi}$. Define $a=e^{i \frac{\phi}{2}}$ and 
$U=\begin{bmatrix} 0 & a \\ \bar{a} & 0 \end{bmatrix}$. Then
$$\Phi\left( \begin{bmatrix} 0 & 1 \\ 1 & 0 \end{bmatrix} \right)= \begin{bmatrix} 0 & b \\ \bar{b} & 0 \end{bmatrix} = \begin{bmatrix} 0 & a \\ \bar{a} & 0 \end{bmatrix} \begin{bmatrix} 0 & 1 \\ 1 & 0 \end{bmatrix}  \begin{bmatrix} 0 & a \\ \bar{a} & 0 \end{bmatrix}=U\begin{bmatrix} 0 & 1 \\ 1 & 0 \end{bmatrix} U^*.$$
This concludes the proof.
\end{proof}

Define $\Phi'=U^* \Phi U$.
Thus $\Phi' \left( \begin{bmatrix} 0 & 1 \\ 1 & 0 \end{bmatrix} \right) = \begin{bmatrix} 0 & 1 \\ 1 & 0 \end{bmatrix}$. Since $\Phi'$ preserves the other assumptions of the lemma, we can substitute $\Phi$ for $\Phi'$, if necessary, so we can safely assume that $\Phi \left( \begin{bmatrix} 0 & 1 \\ 1 & 0 \end{bmatrix} \right) = \begin{bmatrix} 0 & 1 \\ 1 & 0 \end{bmatrix}$.

We have $\Phi \left( \begin{bmatrix} a & 0 \\ 0 & 1 \end{bmatrix} \right)= \begin{bmatrix} \alpha(a) & 0 \\ 0 & \beta(a) \end{bmatrix}$ for $\alpha, \beta: \RR \to \RR$ unital multiplicative maps with $\alpha(-1)=-1$ and $\beta(-1)=1$. 
If $a\neq 0$, we write it as
$$\Phi \left( \begin{bmatrix} a & 0 \\ 0 & 1 \end{bmatrix} \right) = \beta(a) \begin{bmatrix} \frac{\alpha(a)}{\beta(a)} & 0 \\ 0 & 1 \end{bmatrix}.$$
Define $\gamma: \RR^* \to \RR^*$ with $\gamma(a)=\frac{\alpha(a)}{\beta(a)}$. Then $\gamma$ is a unital multiplicative map with $\gamma(-1)=-1$.

\medskip

We now have the following assumptions (C2):
\begin{itemize}
	\item $\Phi:\s_2(\C) \to \s_2(\C)$ a regular J.T.P. homomorphism;
	\item $\Phi(A)=0$ for every $A \in \s_2(\C)$ with $\rank A \leq 1$;
	\item $\Phi(\lambda I) = \Psi(\lambda) I$ for some $\Psi: \RR \to \RR$ multiplicative with $\Psi(0)=0$;
	\item $\Phi \left( \begin{bmatrix} a & 0 \\ 0 & 1 \end{bmatrix} \right)= \beta(a) \begin{bmatrix} \gamma(a) & 0 \\ 0 & 1 \end{bmatrix}$ for $\beta, \gamma: \RR^* \to \RR^*$ unital multiplicative maps with $\beta(-1)=1$ and $\gamma(-1)=-1$;
	\item $\Phi\left( \begin{bmatrix} 0 & 1 \\ 1 & 0 \end{bmatrix} \right)= \begin{bmatrix} 0 & 1 \\ 1 & 0 \end{bmatrix}$.
\end{itemize}

\begin{Lemma}
Under assumptions \textup{(C2)} it holds that
$$\Phi \left( \begin{bmatrix} 1 & 0 \\ 0 & a \end{bmatrix} \right) =  \beta(a) \begin{bmatrix} 1 & 0 \\ 0 & \gamma(a) \end{bmatrix}$$
for every $a\in \RR^*$.
\end{Lemma}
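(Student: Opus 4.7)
The plan is to use the J.T.P. identity with $J_1 := \begin{bmatrix} 0 & 1 \\ 1 & 0 \end{bmatrix}$ as a ``swap'' conjugator. A direct computation shows
$$ J_1 \begin{bmatrix} a & 0 \\ 0 & 1 \end{bmatrix} J_1 = \begin{bmatrix} 1 & 0 \\ 0 & a \end{bmatrix}, $$
so applying the J.T.P.\ homomorphism property yields
$$ \Phi\left(\begin{bmatrix} 1 & 0 \\ 0 & a \end{bmatrix}\right) = \Phi(J_1)\,\Phi\left(\begin{bmatrix} a & 0 \\ 0 & 1 \end{bmatrix}\right)\,\Phi(J_1). $$

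Now I simply substitute the two values that are already fixed by assumptions (C2): $\Phi(J_1) = J_1$ and $\Phi\!\left(\begin{bmatrix} a & 0 \\ 0 & 1 \end{bmatrix}\right) = \beta(a)\begin{bmatrix} \gamma(a) & 0 \\ 0 & 1 \end{bmatrix}$. Pulling the scalar $\beta(a)$ out in front and carrying out the conjugation by $J_1$ (which just swaps the two diagonal entries of a diagonal matrix) gives
$$ \Phi\left(\begin{bmatrix} 1 & 0 \\ 0 & a \end{bmatrix}\right) = \beta(a)\,J_1\begin{bmatrix} \gamma(a) & 0 \\ 0 & 1 \end{bmatrix}J_1 = \beta(a) \begin{bmatrix} 1 & 0 \\ 0 & \gamma(a) \end{bmatrix}, $$
which is the desired formula.

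There is essentially no obstacle here: the result follows immediately once one notices that $J_1$ is fixed by $\Phi$ and acts as the ``swap'' conjugation on diagonal matrices. Since $a \in \RR^*$, the scalar $\beta(a)$ is well-defined, so no separate case for $a=0$ is needed.
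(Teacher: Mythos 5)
Your proof is correct and is essentially identical to the paper's: both apply the J.T.P. identity to $J_1 \begin{bmatrix} a & 0 \\ 0 & 1 \end{bmatrix} J_1 = \begin{bmatrix} 1 & 0 \\ 0 & a \end{bmatrix}$ and use the assumptions $\Phi(J_1)=J_1$ and $\Phi\left(\begin{bmatrix} a & 0 \\ 0 & 1 \end{bmatrix}\right)=\beta(a)\begin{bmatrix} \gamma(a) & 0 \\ 0 & 1 \end{bmatrix}$ from (C2), with the conjugation by $J_1$ swapping the diagonal entries. Nothing is missing.
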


\begin{proof}
We have
\begin{align*} \Phi \left( \begin{bmatrix} 1 & 0 \\ 0 & a \end{bmatrix} \right) &= \Phi \left( \begin{bmatrix} 0 & 1 \\ 1 & 0 \end{bmatrix} \right) \Phi \left( \begin{bmatrix} a & 0 \\ 0 & 1 \end{bmatrix} \right) \Phi \left( \begin{bmatrix} 0 & 1 \\ 1 & 0 \end{bmatrix} \right) \\
& = \beta(a) \begin{bmatrix} 0 & 1 \\ 1 & 0 \end{bmatrix} \begin{bmatrix} \gamma(a) & 0 \\ 0 & 1 \end{bmatrix} \begin{bmatrix} 0 & 1 \\ 1 & 0 \end{bmatrix} =  \beta(a) \begin{bmatrix} 1 & 0 \\ 0 & \gamma(a) \end{bmatrix},
\end{align*}
which concludes the proof.
\end{proof}

We know that
\begin{multicols}{2}
\begin{itemize}
	\item $\Phi \left( \begin{bmatrix} 1 & 0 \\ 0 & 1 \end{bmatrix} \right) = \begin{bmatrix} 1 & 0 \\ 0 & 1 \end{bmatrix}$;
	\item $\Phi \left( \begin{bmatrix} -1 & 0 \\ 0 & 1 \end{bmatrix} \right) = \begin{bmatrix} -1 & 0 \\ 0 & 1 \end{bmatrix}$;
	\item $\Phi \left( \begin{bmatrix} 1 & 0 \\ 0 & -1 \end{bmatrix} \right) = \begin{bmatrix} 1 & 0 \\ 0 & -1 \end{bmatrix}$;
	\item $\Phi \left( \begin{bmatrix} -1 & 0 \\ 0 & -1 \end{bmatrix} \right) = \pm \begin{bmatrix} 1 & 0 \\ 0 & 1 \end{bmatrix}$.
\end{itemize}
\end{multicols}
If $\Phi(-I)=I$, we multiply $\Phi$ by $\eta$ from Lemma \ref{eta} to get $\Phi(-I)=-I$. So we may assume without the loss of generality that $\Phi(-I)=-I$.

\begin{Lemma}
Under assumptions \textup{(C2)} and $\Phi(-I)=-I$ we have
$$\Phi \left( \begin{bmatrix} a & 0 \\ 0 & b \end{bmatrix} \right) = \beta(a b)  \begin{bmatrix} \gamma(a) & 0 \\ 0 & \gamma(b) \end{bmatrix}$$ for every $a, b\in \RR$.
\end{Lemma}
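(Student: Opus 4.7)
The plan is to reduce the general diagonal case to the two cases already known (the matrices $\begin{bmatrix} a & 0 \\ 0 & 1 \end{bmatrix}$ and $\begin{bmatrix} 1 & 0 \\ 0 & a \end{bmatrix}$, plus the four scalar-sign matrices $\pm I$, $\begin{bmatrix} \pm 1 & 0 \\ 0 & \mp 1\end{bmatrix}$) via a Jordan-triple decomposition that pulls out a square root of the absolute values. First I would handle $a,b>0$ by writing
$$\begin{bmatrix} a & 0 \\ 0 & b \end{bmatrix}=\begin{bmatrix} \sqrt{a} & 0 \\ 0 & 1 \end{bmatrix}\begin{bmatrix} 1 & 0 \\ 0 & b \end{bmatrix}\begin{bmatrix} \sqrt{a} & 0 \\ 0 & 1 \end{bmatrix},$$
applying $\Phi$, and using multiplicativity of $\beta$ and $\gamma$ together with the two preceding formulas. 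This immediately yields the claim for positive $a,b$.

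Next I would verify the claim for the four ``sign matrices'' $\begin{bmatrix} \epsilon_1 & 0 \\ 0 & \epsilon_2 \end{bmatrix}$ with $\epsilon_i\in\{-1,1\}$. Three of them ($I$, $\begin{bmatrix} -1 & 0 \\ 0 & 1 \end{bmatrix}$, $\begin{bmatrix} 1 & 0 \\ 0 & -1 \end{bmatrix}$) are the previously tabulated values, and the fourth equals $-I$, which by the standing assumption $\Phi(-I)=-I$ matches $\beta(1)\begin{bmatrix} \gamma(-1) & 0 \\ 0 & \gamma(-1) \end{bmatrix}=-I$ using $\beta(-1)=1$ and $\gamma(-1)=-1$. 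So in every sign case the proposed formula holds.

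For general nonzero $a,b$ I would use the decomposition
$$\begin{bmatrix} a & 0 \\ 0 & b \end{bmatrix}=\begin{bmatrix} \sqrt{|a|} & 0 \\ 0 & \sqrt{|b|} \end{bmatrix}\begin{bmatrix} \sign a & 0 \\ 0 & \sign b \end{bmatrix}\begin{bmatrix} \sqrt{|a|} & 0 \\ 0 & \sqrt{|b|} \end{bmatrix}$$
and apply $\Phi$. The outer factor is covered by the positive case, the middle factor by the sign case, and a direct multiplication (using $\beta$ and $\gamma$ multiplicative together with $|a|\sign a=a$) collapses everything to $\beta(ab)\begin{bmatrix} \gamma(a) & 0 \\ 0 & \gamma(b) \end{bmatrix}$. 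The case when $a=0$ or $b=0$ is vacuous, since then the matrix has rank $\le 1$ and is mapped to $0$ by the degeneracy hypothesis.

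I do not expect serious obstacles. The only point that requires care is the bookkeeping with the sign matrices to make sure all four combinations really collapse into the uniform expression $\beta(\sign a\cdot\sign b)\begin{bmatrix} \gamma(\sign a) & 0 \\ 0 & \gamma(\sign b) \end{bmatrix}$; this relies essentially on $\Phi(-I)=-I$, which is precisely why that normalization was made just before the lemma.
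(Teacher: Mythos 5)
Your proof is correct and follows essentially the same route as the paper: both rest on the Jordan triple decomposition with $\diag(\sqrt{\cdot},\sqrt{\cdot})$ as the outer factor and the previously computed values on the families $\diag(a,1)$, $\diag(1,b)$ and the sign matrices. The only cosmetic difference is that you treat all sign combinations uniformly via a middle factor $\diag(\sign a,\sign b)$, whereas the paper splits into the cases $a>0$, $b>0$, and $a,b<0$ (using $-I$ in the middle for the last one); the bookkeeping identities $\beta(|ab|)\beta(\sign a\,\sign b)=\beta(ab)$ and $\gamma(|a|)\gamma(\sign a)=\gamma(a)$ that you invoke are exactly what the paper's case analysis verifies.
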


\begin{proof}
First take $a>0$. Then
\begin{align*}
\Phi \left( \begin{bmatrix} a & 0 \\ 0 & b \end{bmatrix} \right) &= \Phi \left( \begin{bmatrix} \sqrt{a} & 0 \\ 0 & 1 \end{bmatrix} \right) \Phi \left( \begin{bmatrix} 1 & 0 \\ 0 & b \end{bmatrix} \right) \Phi \left( \begin{bmatrix} \sqrt{a} & 0 \\ 0 & 1 \end{bmatrix} \right) \\
&= \beta(\sqrt{a}) \begin{bmatrix} \gamma(\sqrt{a}) & 0 \\ 0 & 1 \end{bmatrix} \beta(b) \begin{bmatrix} 1 & 0 \\ 0 & \gamma(b) \end{bmatrix} \beta(\sqrt{a}) \begin{bmatrix} \gamma(\sqrt{a}) & 0 \\ 0 & 1 \end{bmatrix} \\
&= \beta(ab) \begin{bmatrix} \gamma(a) & 0 \\ 0 & \gamma(b)\end{bmatrix}.
\end{align*}
Similarly, we can prove the lemma for $b>0$.

If $a,b<0$, write
\begin{align*}
\Phi \left( \begin{bmatrix} a & 0 \\ 0 & b \end{bmatrix} \right) &= \Phi \left( \begin{bmatrix} \sqrt{|a|} & 0 \\ 0 & \sqrt{|b|} \end{bmatrix} \right) \Phi(-I) \Phi \left( \begin{bmatrix} \sqrt{|a|} & 0 \\ 0 & \sqrt{|b|} \end{bmatrix} \right) \\
& = - \beta(|ab|) \begin{bmatrix} \gamma(|a|) & 0 \\ 0 & \gamma(|b|)\end{bmatrix}= \beta(ab) \begin{bmatrix} \gamma(a) & 0 \\ 0 & \gamma(b)\end{bmatrix},
\end{align*}
which concludes the proof.
\end{proof}

\begin{Lemma}
Under assumptions \textup{(C2)} we have
$$\Phi \left( \frac{1}{\sqrt{2}}\begin{bmatrix} 1 & 1 \\ 1 & -1 \end{bmatrix} \right) = \pm \frac{1}{\sqrt{2}}\begin{bmatrix} 1 & 1 \\ 1 & -1 \end{bmatrix}.$$
\end{Lemma}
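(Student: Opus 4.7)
Set $S=\frac{1}{\sqrt{2}}\begin{bmatrix}1 & 1\\ 1 & -1\end{bmatrix}$. My plan is to extract $\Phi(S)$ from two facts: first, that $S$ is an involution, so $\Phi(S)$ must be an involution too; and second, that $S$ sends the already-known matrix $J=\begin{bmatrix}1 & 0\\ 0 & -1\end{bmatrix}$ by conjugation to $\begin{bmatrix}0 & 1\\ 1 & 0\end{bmatrix}$, whose image under $\Phi$ is fixed by (C2). Combining these should force $\Phi(S)=\pm S$.

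First I would verify directly that $S^2=I$ and record the identity
\[
S J S=\begin{bmatrix}0 & 1\\ 1 & 0\end{bmatrix}.
\]
Applying $\Phi$ to $S^2=I$ together with regularity ($\Phi(I)=I$) gives $\Phi(S)^2=I$, so by Lemma \ref{involution} we have $\Phi(S)=\pm I$ or $\Phi(S)=\begin{bmatrix}p & a\\ \bar a & -p\end{bmatrix}$ with $p=\pm\sqrt{1-|a|^2}$. Next, observe that under (C2) we already know $\Phi(J)=J$ (from the bullet list of images computed before the lemma, since $\beta(-1)=1$ and $\gamma(-1)=-1$) and $\Phi\bigl(\begin{bmatrix}0 & 1\\ 1 & 0\end{bmatrix}\bigr)=\begin{bmatrix}0 & 1\\ 1 & 0\end{bmatrix}$. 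Applying $\Phi$ to the identity above therefore yields
\[
\Phi(S)\,J\,\Phi(S)=\begin{bmatrix}0 & 1\\ 1 & 0\end{bmatrix}.
\]

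Now I would rule out the scalar case: if $\Phi(S)=\pm I$, the left hand side equals $J\neq\begin{bmatrix}0 & 1\\ 1 & 0\end{bmatrix}$, a contradiction. In the remaining case, a direct computation of $\Phi(S)\,J\,\Phi(S)$ with $\Phi(S)=\begin{bmatrix}p & a\\ \bar a & -p\end{bmatrix}$ gives
\[
\begin{bmatrix}p^2-|a|^2 & 2pa\\ 2p\bar a & |a|^2-p^2\end{bmatrix}=\begin{bmatrix}0 & 1\\ 1 & 0\end{bmatrix}.
\]
Equating entries yields $p^2=|a|^2$ and $2pa=1$; combined with $p^2+|a|^2=1$ this forces $p^2=|a|^2=\tfrac12$, and then $a=1/(2p)$ is real, equal to $p=\pm\tfrac{1}{\sqrt{2}}$. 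Hence $\Phi(S)=\pm S$.

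The argument is essentially a short chain of forced entry equations; the only subtlety is choosing the right conjugation identity so that the right hand side after applying $\Phi$ is already known, and then excluding the scalar branch of the involution classification. Nothing here looks like a real obstacle — the bulk of the work is the $2\times 2$ multiplication verifying $SJS=\begin{bmatrix}0 & 1\\ 1 & 0\end{bmatrix}$ and the routine algebra extracting $p$ and $a$.
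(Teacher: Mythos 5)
Your proposal is correct and follows essentially the same route as the paper: both use the identity $S\begin{bmatrix}1&0\\0&-1\end{bmatrix}S=\begin{bmatrix}0&1\\1&0\end{bmatrix}$ together with the involution classification of $\Phi(S)$ and the known images under (C2) to force $\Phi(S)=\pm S$. Your explicit exclusion of the scalar branch $\Phi(S)=\pm I$ is a small point the paper passes over by asserting the image is a nontrivial involution, but the argument is otherwise identical.
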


\begin{proof}
Since $\displaystyle\frac{1}{\sqrt{2}}\begin{bmatrix} 1 & 1 \\ 1 & -1 \end{bmatrix}$ is an involution, it is mapped to a nontrivial involution, hence it must be that
$$\Phi \left( \frac{1}{\sqrt{2}}\begin{bmatrix} 1 & 1 \\ 1 & -1 \end{bmatrix} \right) = \begin{bmatrix} \pm \sqrt{1-|a|^2} & a \\ \bar{a} & \mp \sqrt{1-|a|^2} \end{bmatrix}.$$
A short calculation shows that
$$\frac{1}{\sqrt{2}}\begin{bmatrix} 1 & 1 \\ 1 & -1 \end{bmatrix} \begin{bmatrix} 1 & 0 \\ 0 & -1 \end{bmatrix} \frac{1}{\sqrt{2}}\begin{bmatrix} 1 & 1 \\ 1 & -1 \end{bmatrix} = \begin{bmatrix} 0 & 1 \\ 1 & 0 \end{bmatrix}.$$
Applying $\Phi$ on both hand sides of the equation, we get
$$\begin{bmatrix} 1- 2|a|^2 & \pm 2a \sqrt{1-|a|^2} \\ \pm 2\bar{a} \sqrt{1-|a|^2} & 2|a|^2-1 \end{bmatrix} = \begin{bmatrix} 0 & 1 \\ 1 & 0 \end{bmatrix}.$$
This is possible only when $1-2|a|^2=0$ and $\pm 2a \sqrt{1-|a|^2}=1$. The first equation shows that $|a|=\frac{1}{\sqrt{2}}$, and the second then implies that $a=\pm \frac{1}{\sqrt{2}}$.
\end{proof}

\begin{Lemma} \label{lemma6.15}
Let assumptions \textup{(C2)} hold and take arbitrary $a,b\in\RR$. Then $\Phi \left( \begin{bmatrix} a+2b & b \\ b & a \end{bmatrix} \right) = \begin{bmatrix} a'+2b' & b' \\ b' & a' \end{bmatrix}$ for some $a',b'\in\RR$.
\end{Lemma}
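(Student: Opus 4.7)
\emph{Proof plan.} The crucial observation is the identity
$$\begin{bmatrix} a+2b & b \\ b & a \end{bmatrix} = (a+b)\,I + b\sqrt{2}\,S, \qquad S := \frac{1}{\sqrt{2}}\begin{bmatrix} 1 & 1 \\ 1 & -1 \end{bmatrix},$$
which identifies the family in question with the real span of $I$ and $S$ inside $\s_2(\C)$. Since $S$ is a Hermitian involution, every matrix $A$ of this form commutes with $S$, and in particular $A = SAS$.

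Applying $\Phi$ to $A = SAS$ yields $\Phi(A) = \Phi(S)\Phi(A)\Phi(S)$. By the previous lemma $\Phi(S) = \pm S$, and multiplying the last equation on the right by $\Phi(S)$ (which squares to $I$) shows that $\Phi(A)$ commutes with $S$.

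Since $S$ has two distinct eigenvalues $\pm 1$, its centralizer in $\mm_2(\C)$ is two-dimensional over $\C$ and is spanned by $I$ and $S$. Intersecting with Hermitian matrices, any such $\Phi(A)$ must equal $\alpha' I + \beta' S$ for some \emph{real} $\alpha', \beta'$, because $I$ and $S$ are linearly independent Hermitian matrices. Setting $a' := \alpha' - \frac{\beta'}{\sqrt{2}}$ and $b' := \frac{\beta'}{\sqrt{2}}$, a direct expansion gives
$$\alpha' I + \beta' S = \begin{bmatrix} a'+2b' & b' \\ b' & a' \end{bmatrix},$$
which is exactly the desired form.

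The only step needing a moment's thought is spotting the structural decomposition $A = (a+b)I + b\sqrt{2}\,S$; once one sees that this family is the centralizer algebra of $S$, the J.T.P. identity combined with $\Phi(S) = \pm S$ essentially forces the conclusion, and the reality of $\alpha'$ and $\beta'$ is automatic since $\Phi(A)$ is Hermitian.
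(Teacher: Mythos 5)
Your proof is correct and follows essentially the same route as the paper: both arguments rest on the observation that $\begin{bmatrix} a+2b & b \\ b & a \end{bmatrix}$ commutes with $S$ (equivalently $A=SAS$), apply $\Phi$ together with the preceding lemma $\Phi(S)=\pm S$ to conclude that $\Phi(A)$ commutes with $S$, and then identify the Hermitian centralizer of $S$ with matrices of the stated form. Your explicit decomposition $A=(a+b)I+b\sqrt{2}\,S$ and the remark on reality of the coefficients just make the paper's final step more transparent.
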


\begin{proof}
A matrix $\begin{bmatrix} a+2b & b \\ b & a \end{bmatrix}$ commutes with $\displaystyle\frac{1}{\sqrt{2}}\begin{bmatrix} 1 & 1 \\ 1 & -1 \end{bmatrix}$, hence
$$\frac{1}{\sqrt{2}}\begin{bmatrix} 1 & 1 \\ 1 & -1 \end{bmatrix} \begin{bmatrix} a+2b & b \\ b & a \end{bmatrix}  \frac{1}{\sqrt{2}}\begin{bmatrix} 1 & 1 \\ 1 & -1 \end{bmatrix} = \begin{bmatrix} a+2b & b \\ b & a \end{bmatrix}.$$
Then $\Phi\left( \begin{bmatrix} a+2b & b \\ b & a \end{bmatrix} \right)$ commutes with $\Phi \left( \displaystyle\frac{1}{\sqrt{2}}\begin{bmatrix} 1 & 1 \\ 1 & -1 \end{bmatrix} \right) = \pm\displaystyle \frac{1}{\sqrt{2}}\begin{bmatrix} 1 & 1 \\ 1 & -1 \end{bmatrix}$, which implies that
matrix $\Phi\left( \begin{bmatrix} a+2b & b \\ b & a \end{bmatrix} \right)$ has the form $\begin{bmatrix} a'+2b' & b' \\ b' & a' \end{bmatrix}$ for some $a',b'\in\RR$.
\end{proof}

\begin{Lemma}
Under assumptions \textup{(C2)} the function $\gamma: \RR^+ \to \RR^+$ satisfies functional equation
\[ \gamma \left( \frac{x-1+\sqrt{2x^2+2}}{x+1} \right) = \frac{ \gamma(x)-1+ \sqrt{2 \gamma(x)^2+2}}{\gamma(x)+1} \label{eq:fe} \tag{f.e.}. \]
\end{Lemma}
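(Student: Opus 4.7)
The strategy is to compute $\Phi(N(x))$ in two distinct ways, where $N(x) := \tfrac{1}{2}\begin{bmatrix} x+1 & x-1 \\ x-1 & x+1 \end{bmatrix}$, and extract the functional equation by matching matrix entries. For the first computation, observe that $N(x) = S\begin{bmatrix} x & 0 \\ 0 & 1 \end{bmatrix} S$ with $S = \tfrac{1}{\sqrt{2}}\begin{bmatrix} 1 & 1 \\ 1 & -1 \end{bmatrix}$; applying the J.T.P.\ property together with $\Phi(S) = \pm S$ and the formula $\Phi(\diag(a,b)) = \beta(ab)\diag(\gamma(a),\gamma(b))$ (both established above) directly yields $\Phi(N(x)) = \tfrac{\beta(x)}{2}\begin{bmatrix} \gamma(x)+1 & \gamma(x)-1 \\ \gamma(x)-1 & \gamma(x)+1 \end{bmatrix}$.

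For the second computation, I seek real $a, b, q$ with $N(x) = \diag(a,b)\, M_q\, \diag(a,b)$, where $M_q = \begin{bmatrix} q+2 & 1 \\ 1 & q \end{bmatrix}$. Matching entries forces $ab = (x-1)/2$ and $a^2(q+2) = b^2 q = (x+1)/2$; elimination then gives $q(q+2) = ((x+1)/(x-1))^2$ and $a^2/b^2 = q/(q+2)$, with $q + 1 = \sqrt{2x^2+2}/|x-1|$ on the positive branch. Applying the J.T.P.\ property and invoking Lemma \ref{lemma6.15} (which writes $\Phi(M_q) = \begin{bmatrix} c+2d & d \\ d & c \end{bmatrix}$ for some $c, d \in \RR$) together with the diagonal formula, one obtains a second formula for $\Phi(N(x))$.

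Equating the two expressions entry by entry and using multiplicativity of $\gamma$: the $(1,1)/(2,2)$ ratio yields $(c+2d)/c = \gamma((q+2)/q)$, so $d/c = (1-w)/(2w)$ with $w := \gamma(q/(q+2))$; the $(1,2)/(2,2)$ ratio yields $(\gamma(a)/\gamma(b))(d/c) = (\gamma(x)-1)/(\gamma(x)+1)$. The crucial algebraic identity is $\sqrt{q/(q+2)} = 1/y$ for $y := (x-1+\sqrt{2x^2+2})/(x+1)$ (in the range $x > 1$), verified by a short rationalization of the formula for $q$; hence by multiplicativity $\sqrt{w} = 1/\gamma(y)$, and substituting collapses the combined relation to
$$\frac{\gamma(y)^2 - 1}{2\gamma(y)} = \frac{\gamma(x) - 1}{\gamma(x) + 1},$$
which is the quadratic characterization of $\gamma(y) = T(\gamma(x))$; the positive root is the stated functional equation.

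\textbf{Main obstacle.} The principal delicacy is sign and branch management: in the range $0 < x < 1$ one has $ab < 0$ and the identity becomes $\sqrt{q/(q+2)} = y$ rather than $1/y$, so two sign reversals occur that must cancel for the functional equation to hold uniformly. Checking this cancellation is the only subtle step; the remaining manipulations are routine algebra.
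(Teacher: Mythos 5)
Your argument is correct, and I verified the two delicate points: the identity $\sqrt{q/(q+2)}=1/y$ for $x>1$ (and $=y$ for $0<x<1$), and the cancellation of the two sign reversals in the range $0<x<1$, where $\gamma(a)/\gamma(b)=\gamma(a/b)=-\sqrt{w}$ while $\sqrt{w}=\gamma(y)$ rather than $1/\gamma(y)$; both flips indeed conspire to give the same relation $\frac{\gamma(y)^2-1}{2\gamma(y)}=\frac{\gamma(x)-1}{\gamma(x)+1}$, whose positive root is \eqref{eq:fe}. However, your decomposition is the reverse of the paper's and is noticeably less economical. The paper conjugates $N(x)=S\operatorname{diag}(x,1)S$ by $\operatorname{diag}(a,1)$ and chooses $a$ so that the \emph{result} lies in the special form of Lemma \ref{lemma6.15}; the choice of $a$ is then forced to be exactly $y=\frac{x-1+\sqrt{2x^2+2}}{x+1}$, so the constraint from Lemma \ref{lemma6.15} is a quadratic directly in the unknown $\gamma(y)$, with no auxiliary unknowns $c,d$, no rationalization identity, and no branch analysis (only $a,x>0$ and $\Phi(\operatorname{diag}(t,1))$ for $t>0$ are ever needed, which is literally the fourth bullet of (C2)). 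Your route, writing $N(x)=\operatorname{diag}(a,b)\,M_q\,\operatorname{diag}(a,b)$ with $M_q$ in the special form, buys nothing in generality and costs you the elimination of $c,d$ via entry ratios plus the sign bookkeeping you flag as the main obstacle. One small caution if you write this up: the formula $\Phi(\operatorname{diag}(a,b))=\beta(ab)\operatorname{diag}(\gamma(a),\gamma(b))$ is proved in the paper under the additional hypothesis $\Phi(-I)=-I$, whereas the present lemma is stated under (C2) alone; since for $0<x<1$ you have $ab<0$, you should normalize $a>0$ and invoke only the $a>0$ half of that lemma (which uses nothing beyond (C2)) to avoid importing an assumption the statement does not carry.
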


\begin{proof}
Take $x,a>0$. Then
\begin{align*}
\Phi & \left( \begin{bmatrix} a & 0 \\ 0 & 1 \end{bmatrix} \frac{1}{\sqrt{2}}\begin{bmatrix} 1 & 1 \\ 1 & -1 \end{bmatrix} \begin{bmatrix} x & 0 \\ 0 & 1 \end{bmatrix} \frac{1}{\sqrt{2}}\begin{bmatrix} 1 & 1 \\ 1 & -1 \end{bmatrix}  \begin{bmatrix} a & 0 \\ 0 & 1 \end{bmatrix} \right)= \\
&= \beta(a^2 x) \begin{bmatrix} \gamma(a) & 0 \\ 0 & 1 \end{bmatrix}\! \left( \pm \frac{1}{\sqrt{2}}\begin{bmatrix} 1 & 1 \\ 1 & -1 \end{bmatrix} \right)\! \begin{bmatrix} \gamma(x) & 0 \\ 0 & 1 \end{bmatrix}  \left( \pm \frac{1}{\sqrt{2}}\begin{bmatrix} 1 & 1 \\ 1 & -1 \end{bmatrix} \right) \begin{bmatrix} \gamma(a) & 0 \\ 0 & 1 \end{bmatrix} \\
&= \beta(a^2 x) \begin{bmatrix} \gamma(a^2) \frac{\gamma(x)+1}{2} & \gamma(a) \frac{\gamma(x)-1}{2} \\ \gamma(a) \frac{\gamma(x)-1}{2} & \frac{ \gamma(x)+1}{2} \end{bmatrix} =
\Phi \left( \begin{bmatrix} a^2 \frac{x+1}{2} & a \frac{x-1}{2} \\ a \frac{x-1}{2} & \frac{x+1}{2} \end{bmatrix} \right).
\end{align*}
We would like the matrix $A=\begin{bmatrix} a^2 \frac{x+1}{2} & a \frac{x-1}{2} \\ a \frac{x-1}{2} & \frac{x+1}{2} \end{bmatrix}$ to have the form as in Lemma \ref{lemma6.15}, hence
choose $a \in \RR$ such that
$$a^2 \frac{x+1}{2}=\frac{x+1}{2}+2a \frac{x-1}{2}.$$
Taking for $a$ the positive solution of this quadratic equation, we get
$$a=\frac{x-1+\sqrt{2x^2+2}}{x+1}.$$
The matrix $A$ is therefore mapped to a matrix of the same form by Lemma \ref{lemma6.15}, hence
$$\gamma(a^2) \frac{\gamma(x)+1}{2}= 2 \gamma(a) \frac{\gamma(x)-1}{2}+\frac{\gamma(x)+1}{2}.$$
Since $a$ is positive, it is mapped by $\gamma$ to a positive solution of the new quadratic equation, thus
$$\gamma(a)=\gamma \left( \frac{x-1+\sqrt{2x^2+2}}{x+1} \right) = \frac{ \gamma(x)-1 +\sqrt{2 \gamma(x)^2+2}}{\gamma(x)+1},$$
which concludes the proof.
\end{proof}

\begin{Lemma}
Under assumptions \textup{(C2)} the function
$\gamma$ has one of the following forms:
$$\gamma(x)=x \quad \textrm{for every } x\in \RR^* \qquad \textrm{ or } \qquad \gamma(x)=x^{-1} \quad \textrm{for every } x\in \RR^*.$$
\end{Lemma}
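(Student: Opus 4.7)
The plan is to leverage the functional equation $\gamma \circ f = f \circ \gamma$ on $\mathbb{R}^+$ together with multiplicativity to force $\gamma$ to be a power function, and then to read off the exponent from the asymptotics of $f$. First I would note that since $\gamma(x) = \gamma(\sqrt{x})^2 > 0$ for $x > 0$, $\gamma$ restricts to a multiplicative map $\mathbb{R}^+ \to \mathbb{R}^+$ with $\gamma(1)=1$. The key step is then to show that $\gamma$ is continuous at $1$. The substitution $x = \tan\theta$ turns $f$ into the affine contraction $\theta \mapsto \pi/8 + \theta/2$ with fixed point $\pi/4$, so $f^n(\mathbb{R}^+) = I_n := (\tan(\pi/4 - \pi/2^{n+2}), \tan(\pi/4 + \pi/2^{n+2}))$ is a shrinking neighborhood of $1$. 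Iterating the functional equation gives $\gamma \circ f^n = f^n \circ \gamma$, so for any $y = f^n(u) \in I_n$ we have $\gamma(y) = f^n(\gamma(u)) \in I_n$. Thus $\gamma$ maps each $I_n$ into itself, giving continuity at $1$; by the standard fact that a multiplicative function on $\mathbb{R}^+$ continuous at a single point is continuous everywhere, we conclude $\gamma(x) = x^c$ for some $c \in \mathbb{R}$.

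Substituting $\gamma(x) = x^c$ into the functional equation yields $f(x)^c = f(x^c)$ for all $x > 0$. Using the asymptotic limits $\lim_{x \to \infty} f(x) = 1+\sqrt{2}$ and $\lim_{x \to 0^+} f(x) = (1+\sqrt{2})^{-1}$, I would pass to the limit $x \to \infty$: for $c > 0$ we get $(1+\sqrt{2})^c = 1+\sqrt{2}$, hence $c = 1$; for $c < 0$ we get $(1+\sqrt{2})^c = (1+\sqrt{2})^{-1}$, hence $c = -1$. The remaining possibility $c = 0$ would give $\gamma \equiv 1$ on $\mathbb{R}^+$, which would make $\Phi$ of every diagonal matrix a scalar multiple of the identity and thus reduce $\Phi$ to one of the forms of Case (i) of Theorem \ref{main} rather than to the degenerate subcase (C2) under consideration, so it is excluded by the case distinction. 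Finally, invoking $\gamma(-1) = -1$ and multiplicativity, the two surviving exponents extend uniquely to $\gamma(x) = x$ or $\gamma(x) = x^{-1}$ on all of $\mathbb{R}^*$.

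The main obstacle is the continuity step: the tangent parameterization cleanly reveals the contraction structure of $f$, but the claim that $\gamma(I_n) \subseteq I_n$ translates into $\varepsilon$-$\delta$ continuity at $1$ requires confirming $\bigcap_n I_n = \{1\}$ and that the $I_n$ form a neighborhood basis of $1$, both of which follow from $\pi/2^{n+2} \to 0$ and continuity of $\tan$ at $\pi/4$. Once this is in place, the asymptotic identification of $c$ is immediate and the extension to $\mathbb{R}^*$ is formal.
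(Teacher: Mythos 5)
Your argument is correct and reaches the paper's conclusion, but the key regularity step is done by a genuinely different route. Writing $f(x)=\frac{x-1+\sqrt{2x^2+2}}{x+1}$, the paper reads the functional equation $\gamma\circ f=f\circ\gamma$ through the substitution $\gamma(x)=e^{g(\log x)}$ with $g$ additive, shows via the elementary estimate $\frac{t-1+\sqrt{2t^2+2}}{t+1}\le 1+\sqrt{2}$ that $g$ is bounded on the interval $\bigl(0,\log(1+\sqrt{2})\bigr)$, and invokes the standard theorem that an additive function bounded on an interval is linear. You instead exploit the dynamics of $f$: the identity $f(\tan\theta)=\tan\bigl(\tfrac{\theta}{2}+\tfrac{\pi}{8}\bigr)$ (which checks out, via $\tan\frac{u}{2}=\frac{1-\cos u}{\sin u}$ with $u=\theta+\frac{\pi}{4}$) exhibits $f$ as a contraction of $\RR^+$ onto a shrinking neighborhood basis $I_n=f^n(\RR^+)$ of the fixed point $1$, and iterating $\gamma\circ f^n=f^n\circ\gamma$ gives $\gamma(I_n)\subseteq I_n$, hence continuity of $\gamma$ at $1$ and so $\gamma(x)=x^c$. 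Both arguments are sound and of comparable length; yours is more conceptual and avoids the appeal to the boundedness criterion for additive functions, while the paper's is more self-contained computationally. The identification $c\in\{1,-1\}$ from the limits $f(x)\to 1+\sqrt{2}$ and $f(x)\to(1+\sqrt 2)^{-1}$, and the extension to $\RR^*$ via $\gamma(-1)=-1$, coincide with the paper.

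One soft spot: your exclusion of $c=0$ is not right as stated. Since $\gamma(-1)=-1$, the case $c=0$ gives $\gamma=\operatorname{sign}$, so $\Phi$ of a diagonal matrix with eigenvalues of opposite signs is $\beta(ab)\operatorname{diag}(1,-1)$, \emph{not} a scalar; and ``it would land in form (i)'' is not by itself a contradiction with the hypotheses defining (C2), which say nothing directly about $\gamma$ on $\RR^+$. To be fair, the paper is equally terse here (``a contradiction with our assumptions''). A concrete fix: if $\gamma\equiv 1$ on $\RR^+$ then $\Phi(P)$ is scalar for every positive definite $P$; taking $T=\begin{bmatrix}1&1\\1&0\end{bmatrix}$ and $N=\begin{bmatrix}0&1\\1&0\end{bmatrix}$ one has $TNT=\begin{bmatrix}2&1\\1&1\end{bmatrix}>0$ with determinant $1$, so $\Phi(T)N\Phi(T)=I$, forcing $N=\Phi(T)^{-2}$ to be positive definite, which is false. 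With some such argument supplied, your proof is complete.
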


\begin{proof}
Let us prove the lemma for $x>0$ first. For $x<0$ it will then follow, since $\gamma(-x)=-\gamma(x)$.

We know that $\gamma: \RR^+ \to \RR^+$ is a multiplicative function satisfying \eqref{eq:fe}. By \cite[Theorem 2.4]{sahoo} $\gamma$ has the form $\gamma(x)=e^{f( \log x)}$ for every $x>0$, where $f: \RR \to \RR$ is additive.
From \eqref{eq:fe} it follows that
$$e^{ f\left( \log \frac{x-1+\sqrt{2x^2+2}}{x+1} \right)} = \frac{ e^{f(\log x)}-1 + \sqrt{2e^{2f(\log x)}+2}}{e^{f(\log x)} +1},$$
hence
$$f\left( \log \frac{x-1+\sqrt{2x^2+2}}{x+1} \right) = \log \frac{ e^{f(\log x)}-1 + \sqrt{2e^{2f(\log x)}+2}}{e^{f(\log x)} +1}.$$
Taking $x\in (1,\infty)$, we get
$z=\log \frac{x-1+\sqrt{2x^2+2}}{x+1} \in (0, \log(1+\sqrt{2})).$
Substituting $y=f(\log x)$, we get
\begin{align*}
f(z) &= \log \frac{ e^{f(\log x)}-1 + \sqrt{2e^{2f(\log x)}+2}}{e^{f(\log x)} +1} = \log \frac{ e^y-1+ \sqrt{2e^{2y}+2}}{e^y+1}.
\end{align*}
Then for $t>0$ the following estimation manipulation
\begin{align*}
2t^2+2  & \le (\sqrt{2} t + 2+ \sqrt{2})^2 \\
\sqrt{2t^2+2} & \le \sqrt{2} t +1+ \sqrt{2} + 1 \\
t-1 + \sqrt{2t^2 +2} & \le (1+\sqrt{2})(t+1) \\
\frac{t-1+\sqrt{2t^2+2}}{t+1} & \le 1+\sqrt{2}
\end{align*}
shows that $f(z) \le \log (1+\sqrt{2})$.

Thus additive function $f$ is bounded on an open interval $(0, \log (1+\sqrt{2}))$, hence by \cite[Theorem 1.8]{sahoo} it is linear.
Since it has the form $f(z)=cz $ for some $c\in \RR$, it follows that $\gamma(x)=x^c$.

We get
$$\left( \frac{x-1+\sqrt{2x^2+2}}{x+1} \right)^c = \frac{ x^c-1+\sqrt{2x^{2c}+2}}{x^c+1}$$
for every $x>0$.
If $c>0$, by taking $\lim_{x \to \infty}$ we get $(1+\sqrt{2})^c= 1+\sqrt{2}$. Thus $c=1$. If $c<0$, again by taking $\lim_{x\to \infty}$ we get $(1+\sqrt{2})^c=-1+\sqrt{2}$, which implies $c=-1$. The last solution is $c=0$. By taking $c=0$, we get $\Phi\left(\begin{bmatrix} x & 0 \\ 0 & 1 \end{bmatrix} \right)=I$ for every $x>0$, which is a contradiction with our assumptions, hence $c \in \{-1,1\}$.
\end{proof}

First we show that in the case $c=-1$ one can reduce the proof to the case $c=1$. We have $\gamma(x)=\frac{1}{x}$. Under assumptions \textup{(C2)} and $\Phi(-I)=-I$ we have
$$\Phi \left( \begin{bmatrix} a & 0 \\ 0 & b \end{bmatrix} \right) = \beta(ab) \begin{bmatrix} \frac{1}{a} & 0 \\ 0 & \frac{1}{b} \end{bmatrix} = \frac{\beta(ab)}{ab} \begin{bmatrix} b & 0 \\ 0 & a \end{bmatrix}$$
for every $a,b \in \RR^*$.
Define
$\Phi'(A)=
\begin{cases}
\Phi(A^{-1}); & \rank A=2 \\
0; & \rank A \le 1
\end{cases}.$
Then, introducing new notation $\Psi'(t)=\Psi(t^{-1})$ and $\beta'(t)=\beta(t^{-1})$, we have
\begin{multicols}{2}
\begin{itemize}
	\item $\Phi'(0)=0$;
	\item $\Phi'(I)=I$;
	\item $\Phi'(-I)=-I$;
	\item $\Phi'(\lambda I)= \Psi(\lambda^{-1})I= \Psi'(\lambda)I$;
	\item $\Phi'\left( \begin{bmatrix} 0 & 1 \\ 1 & 0 \end{bmatrix} \right) = \begin{bmatrix} 0 & 1 \\ 1 & 0 \end{bmatrix}$;
\end{itemize}
\end{multicols}
\begin{itemize}
	\item $\Phi'\left( \begin{bmatrix} a & 0 \\ 0 & b \end{bmatrix} \right) = \Phi\left( \begin{bmatrix} \frac{1}{a} & 0 \\ 0 & \frac{1}{b} \end{bmatrix} \right)= \beta\left(\frac{1}{ab}\right) \begin{bmatrix} a & 0 \\ 0 & b \end{bmatrix}= \beta'(ab) \begin{bmatrix} a & 0 \\ 0 & b \end{bmatrix} $;
	\item $\Phi'\left(\displaystyle \frac{1}{\sqrt{2}}\begin{bmatrix} 1 & 1 \\ 1 & -1 \end{bmatrix} \right) = \Phi\left(\displaystyle \frac{1}{\sqrt{2}}\begin{bmatrix} 1 & 1 \\ 1 & -1 \end{bmatrix} \right) =\pm \displaystyle\frac{1}{\sqrt{2}}\begin{bmatrix} 1 & 1 \\ 1 & -1 \end{bmatrix}$;
	\item $\Phi' \left( \begin{bmatrix} a+2b & b \\ b & a \end{bmatrix} \right) = \begin{bmatrix} a'+2b' & b' \\ b' & a' \end{bmatrix}$.
\end{itemize}
So taking $\Phi'$ instead of $\Phi$, if necessary, we may assume that $\gamma(x)=x$.

\begin{Lemma}
Under assumptions \textup{(C2)}, $\Phi(-I)=-I$ and $\gamma(x)=x$ we have
$$\Phi \left( \begin{bmatrix} a & b \\ b & a \end{bmatrix} \right) = \beta(a^2-b^2) \begin{bmatrix} a & b \\ b & a \end{bmatrix}.$$
\end{Lemma}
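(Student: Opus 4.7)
The plan is to diagonalize $\begin{bmatrix} a & b \\ b & a \end{bmatrix}$ using the Hadamard-type involution
$$S = \frac{1}{\sqrt{2}}\begin{bmatrix} 1 & 1 \\ 1 & -1 \end{bmatrix},$$
and then apply $\Phi$ to the resulting J.T.P. factorization. A direct check gives
$$\begin{bmatrix} a & b \\ b & a \end{bmatrix} = S \begin{bmatrix} a+b & 0 \\ 0 & a-b \end{bmatrix} S,$$
so that $\Phi\!\left(\begin{bmatrix} a & b \\ b & a \end{bmatrix}\right) = \Phi(S)\,\Phi\!\left(\begin{bmatrix} a+b & 0 \\ 0 & a-b \end{bmatrix}\right)\,\Phi(S)$.

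Assume first that $a^2-b^2 \neq 0$, i.e.\ both diagonal entries of the middle matrix are nonzero. By the previous lemma established under (C2), $\Phi(-I)=-I$, and $\gamma(x)=x$ we have
$$\Phi\!\left(\begin{bmatrix} a+b & 0 \\ 0 & a-b \end{bmatrix}\right) = \beta\bigl((a+b)(a-b)\bigr)\begin{bmatrix} a+b & 0 \\ 0 & a-b \end{bmatrix} = \beta(a^2-b^2)\begin{bmatrix} a+b & 0 \\ 0 & a-b \end{bmatrix},$$
and by the lemma characterizing $\Phi(S)$ we have $\Phi(S)=\pm S$. Substituting and multiplying out, the $\pm$ sign squares away, so
$$\Phi\!\left(\begin{bmatrix} a & b \\ b & a \end{bmatrix}\right) = \beta(a^2-b^2)\, S\begin{bmatrix} a+b & 0 \\ 0 & a-b \end{bmatrix}S.$$
The routine calculation $S\begin{bmatrix} a+b & 0 \\ 0 & a-b \end{bmatrix}S = \begin{bmatrix} a & b \\ b & a \end{bmatrix}$ then yields the claimed formula.

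The only remaining issue is the case $a^2-b^2=0$, i.e.\ $b=\pm a$. There the matrix $\begin{bmatrix} a & b \\ b & a \end{bmatrix}$ has rank at most one, so by the degeneracy assumption of (C2) its image under $\Phi$ is $0$. Interpreting the stated formula with the convention $\beta(0):=0$ (or simply noting that this case is covered by the definition of a degenerate $\Phi$), the identity still holds. I do not expect a serious obstacle here, since both the diagonalization and the evaluation of $\Phi$ on $S$ and on diagonal matrices are already available from the preceding lemmas; the main care is in bookkeeping the $\pm$ sign in $\Phi(S)$ and in handling the rank-deficient boundary case.
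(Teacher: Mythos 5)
Your proposal is correct and follows essentially the same route as the paper: the paper also writes $\begin{bmatrix} a & b \\ b & a \end{bmatrix}$ as $S\begin{bmatrix} a+b & 0 \\ 0 & a-b \end{bmatrix}S$ with $S=\frac{1}{\sqrt{2}}\begin{bmatrix} 1 & 1 \\ 1 & -1 \end{bmatrix}$, applies the diagonal-matrix lemma and $\Phi(S)=\pm S$, and observes that the sign cancels. Your extra remark on the rank-deficient case $b=\pm a$ is a reasonable piece of bookkeeping that the paper leaves implicit.
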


\begin{proof}
We have
\begin{align*}
\Phi \left( \begin{bmatrix} a & b \\ b & a \end{bmatrix} \right) &= \Phi \left( \frac{1}{\sqrt{2}}\begin{bmatrix} 1 & 1 \\ 1 & -1 \end{bmatrix}  \begin{bmatrix} a+b & 0 \\ 0 & a-b \end{bmatrix} \frac{1}{\sqrt{2}}\begin{bmatrix} 1 & 1 \\ 1 & -1 \end{bmatrix} \right) \\
&= \pm \frac{1}{\sqrt{2}}\begin{bmatrix} 1 & 1 \\ 1 & -1 \end{bmatrix} \beta(a^2-b^2) \begin{bmatrix} a+b & 0 \\ 0 & a-b \end{bmatrix} \left( \pm \frac{1}{\sqrt{2}}\begin{bmatrix} 1 & 1 \\ 1 & -1 \end{bmatrix} \right) \\
&= \beta(a^2-b^2) \begin{bmatrix} a & b \\ b & a \end{bmatrix},
\end{align*}
which concludes the proof.
\end{proof}

\begin{Lemma}
Under assumptions \textup{(C2)}, $\Phi(-I)=-I$ and $\gamma(x)=x$ we have
$\Phi \left( \begin{bmatrix} a & b \\ b & c \end{bmatrix} \right) = \beta(ac-b^2) \begin{bmatrix} a & b \\ b & c \end{bmatrix}$ for every $a,b,c\in \RR$.
\end{Lemma}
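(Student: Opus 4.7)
My plan is to prove the formula in three stages: first for matrices with $ac>0$ by a diag--Toeplitz--diag factorisation; next to deduce from that case an intertwining identity $\Phi(B_\theta)\,D\,\Phi(B_\theta)=B_\theta\,D\,B_\theta$ for the one--parameter family of real Hermitian involutions $B_\theta=\begin{bmatrix}\cos\theta & \sin\theta\\ \sin\theta & -\cos\theta\end{bmatrix}$; and finally to close the general case by combining this intertwining with the spectral decomposition supplied by Lemma \ref{lemma21}.

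For the case $ac>0$ I use
$$\begin{bmatrix}a & b\\ b & c\end{bmatrix}=\begin{bmatrix}\sqrt{a/c} & 0\\ 0 & 1\end{bmatrix}\begin{bmatrix}c & b\sqrt{c/a}\\ b\sqrt{c/a} & c\end{bmatrix}\begin{bmatrix}\sqrt{a/c} & 0\\ 0 & 1\end{bmatrix},$$
which makes sense because $a/c>0$. Applying $\Phi$ and invoking, on the one hand, the diagonal formula (with $\gamma=\mathrm{id}$ this reads $\Phi(\diag(x,1))=\beta(x)\diag(x,1)$) and, on the other, the Toeplitz formula from the preceding lemma, the two scalar factors collapse by multiplicativity of $\beta$ into
$$\beta(a/c)\,\beta\!\left(c^2-\tfrac{b^2 c}{a}\right)=\beta(ac-b^2),$$
while the matrix product reassembles into $\begin{bmatrix}a & b\\ b & c\end{bmatrix}$, producing the claimed identity in this regime.

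For the intertwining step, fix $\theta$ and $d_1,d_2>0$. The matrix $A=B_\theta\,\diag(d_1,d_2)\,B_\theta$ has diagonal entries $d_1\cos^2\theta+d_2\sin^2\theta$ and $d_1\sin^2\theta+d_2\cos^2\theta$, both strictly positive, so the previous case applies and gives $\Phi(A)=\beta(d_1d_2)\,A$. Applying $\Phi$ via J.T.P.\ and the diagonal formula also gives $\Phi(A)=\beta(d_1d_2)\,\Phi(B_\theta)\,\diag(d_1,d_2)\,\Phi(B_\theta)$. Equating, one obtains $\Phi(B_\theta)\,\diag(d_1,d_2)\,\Phi(B_\theta)=B_\theta\,\diag(d_1,d_2)\,B_\theta$ for every $d_1,d_2>0$; since both sides are bilinear in $(d_1,d_2)$, the identity extends to every real diagonal matrix $D$.

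Finally, for arbitrary $a,b,c\in\RR$, Lemma \ref{lemma21} supplies $\theta\in\RR$ and a real diagonal $D=\diag(\lambda_1,\lambda_2)$ with $\begin{bmatrix}a & b\\ b & c\end{bmatrix}=B_\theta D B_\theta$ and $\lambda_1\lambda_2=\det A=ac-b^2$. Hence
$$\Phi(A)=\Phi(B_\theta)\,\Phi(D)\,\Phi(B_\theta)=\beta(\lambda_1\lambda_2)\,\Phi(B_\theta)\,D\,\Phi(B_\theta)=\beta(ac-b^2)\,B_\theta D B_\theta=\beta(ac-b^2)\,A,$$
as required. I expect the heart of the argument to be the middle step: one has to convert the positive--diagonal case of the formula into a rigidity statement about the action of $\Phi(B_\theta)$ by J.T.P.\ conjugation on diagonals, and then use bilinearity to extend from positive to arbitrary real diagonals; once this intertwining is in hand the spectral decomposition closes the proof in one line.
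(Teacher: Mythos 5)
Your proof is correct, and while your first stage (the $ac>0$ case via $\diag(\sqrt{a/c},1)$ conjugation of a Toeplitz matrix) is exactly the paper's, the second half takes a genuinely different route. The paper handles the remaining matrices by explicitly factoring the trace-zero matrix $\left[\begin{smallmatrix} a & b \\ b & -a\end{smallmatrix}\right]$ as $CDC$ with $C=\left[\begin{smallmatrix} 1/2 & b/a \\ b/a & 1+2b^2/a^2\end{smallmatrix}\right]$ positive definite (so the $ac>0$ case applies to $C$), deducing that every real symmetric involution is an actual fixed point of $\Phi$ (using $\beta(-1)=1$), and only then invoking $A=BDB$. You never determine $\Phi(B_\theta)$ at all: you extract from the positive-diagonal case only the conjugation identity $\Phi(B_\theta)D\Phi(B_\theta)=B_\theta DB_\theta$, extend it from the positive quadrant to all real diagonals by linearity, and close with the spectral decomposition. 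This is cleaner in that it avoids the paper's rather ad hoc explicit factorization and the separate trace-zero computation; what it gives up is the by-product that real involutions are fixed by $\Phi$ (which the paper does not in fact use later). Two small points: the maps $D\mapsto MDM$ are \emph{linear} in $D$, not bilinear, though the extension argument you intend (two linearly independent positive diagonals span the space of real diagonals) is exactly right; and, as in the paper, the stated formula should be read with the convention that both sides vanish when $ac-b^2=0$, since $\beta$ is only defined on $\RR^*$ and singular matrices are annihilated by the degeneracy hypothesis.
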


\begin{proof}
First take $ac >0$. We have
\begin{align*}
\Phi \left( \begin{bmatrix} a & b \\ b & c \end{bmatrix} \right) &= \Phi \left( \begin{bmatrix} \sqrt{\frac{a}{c}} & 0 \\ 0 & 1 \end{bmatrix} \begin{bmatrix} c & b \sqrt{\frac{c}{a}} \\ b \sqrt{\frac{c}{a}} & c \end{bmatrix} \begin{bmatrix} \sqrt{\frac{a}{c}} & 0 \\ 0 & 1 \end{bmatrix} \right) \\
&= \beta\left(\frac{a}{c}\right) \beta \left( c^2-b^2 \frac{c}{a} \right) \begin{bmatrix} a & b \\ b & c \end{bmatrix}.
\end{align*}

Next take $a\neq 0$. Straightforward calculation shows that
\begin{align*}
\Phi \left( \begin{bmatrix} a & b \\ b & -a \end{bmatrix} \right) &= \Phi \left( \begin{bmatrix} \frac{1}{2} & \frac{b}{a} \\ \frac{b}{a} & 1+2\frac{b^2}{a^2} \end{bmatrix} \begin{bmatrix} 4a+4\frac{b^2}{a} & 0 \\ 0 & -a \end{bmatrix} \begin{bmatrix} \frac{1}{2} & \frac{b}{a} \\ \frac{b}{a} & 1+2\frac{b^2}{a^2} \end{bmatrix} \right) \\
&= \beta(-a^2-b^2) \begin{bmatrix} a & b \\ b & -a \end{bmatrix}.
\end{align*}
Thus every real involution maps to itself. Since every real symmetric $2 \times 2$ matrix can be written $BDB$, where $B$ is a real symmetric involution and $D$ diagonal, the assertion follows.
\end{proof}

\begin{Lemma}
Assume the set of assumptions \textup{(C2)}, $\Phi(-I)=-I$, and $\gamma \equiv \id$. For arbitrary $x\in \C$ with $|x|=1$ we have
$\Phi \left( \begin{bmatrix} 0 & x \\ \bar{x} & 0 \end{bmatrix} \right) = \begin{bmatrix} 0 & \lambda \\ \bar{\lambda} & 0 \end{bmatrix}$ , where $\lambda \in \C$ with $|\lambda|=1$.
\end{Lemma}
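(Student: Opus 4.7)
Here is my approach.

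Let $x \in \C$ with $|x|=1$, and set $M_x := \begin{bmatrix} 0 & x \\ \bar{x} & 0 \end{bmatrix}$ and $N := \Phi(M_x)$. The plan is to exploit two simple algebraic identities satisfied by $M_x$: first, $M_x^2 = I$ (so that $N$ is an involution), and second, a conjugation identity against $J' := \begin{bmatrix} 1 & 0 \\ 0 & -1 \end{bmatrix}$ whose image under $\Phi$ is known from the previous lemma. Combining the two will pin down the form of $N$.

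First I would record that $J' = \begin{bmatrix} 1 & 0 \\ 0 & -1 \end{bmatrix}$ is mapped to itself, using the formula $\Phi \left( \begin{bmatrix} a & b \\ b & c \end{bmatrix} \right) = \beta(ac-b^2) \begin{bmatrix} a & b \\ b & c \end{bmatrix}$ from the previous lemma (with $a=1$, $c=-1$, $b=0$, and $\beta(-1)=1$). Next, by direct calculation one checks
\[
M_x \, J' \, M_x = \begin{bmatrix} -1 & 0 \\ 0 & 1 \end{bmatrix}.
\]
Applying the J.T.P.\ identity, and using that $\Phi$ sends the right-hand side to itself, yields
\[
N J' N = \begin{bmatrix} -1 & 0 \\ 0 & 1 \end{bmatrix}.
\]
Since $M_x^2 = I$, we also have $N^2 = I$.

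Now write the Hermitian matrix $N = \begin{bmatrix} p & q \\ \bar q & r \end{bmatrix}$ with $p,r \in \RR$, $q \in \C$. Expanding $NJ'N$ gives the diagonal conditions $p^2 - |q|^2 = -1$ and $|q|^2 - r^2 = 1$, while $N^2 = I$ gives $p^2 + |q|^2 = 1$ and $|q|^2 + r^2 = 1$. Adding the first and third equations forces $p = 0$; adding the second and fourth forces $r = 0$; and either way $|q|^2 = 1$. Hence $N = \begin{bmatrix} 0 & \lambda \\ \bar\lambda & 0 \end{bmatrix}$ with $\lambda := q \in \Gamma$, as desired.

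The argument is purely computational; no real obstacle is expected. The one subtlety is to make sure we are entitled to use $\Phi(J') = J'$, but this is immediate from the preceding lemma on real $2\times 2$ matrices, so the whole proof reduces to the two matrix identities above plus a two-line system in $p, r, |q|$.
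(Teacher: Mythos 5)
Your proof is correct and takes essentially the same route as the paper: both conjugate a diagonal matrix by $\begin{bmatrix} 0 & x \\ \bar{x} & 0 \end{bmatrix}$ (the paper uses a general $\diag(a,c)$ with $a\neq c$ together with the involution parametrization of Lemma~3.5, while you specialize to $\diag(1,-1)$ and write out $N^2=I$ directly) and then force the diagonal of the image to vanish and the off-diagonal entry to have modulus one. One trivial wording slip: it is \emph{subtracting} the second and fourth equations that gives $r=0$, while adding them gives $|q|^2=1$; the system still yields $p=r=0$ and $|q|=1$ as you state.
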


\begin{proof}
Take $a,c\in \RR$ with $a\neq c$. Then
$$ \Phi \left( \begin{bmatrix} 0 & x \\ \bar{x} & 0 \end{bmatrix} \begin{bmatrix} a & 0 \\ 0 & c \end{bmatrix} \begin{bmatrix} 0 & x \\ \bar{x} & 0 \end{bmatrix} \right) = \Phi \left( \begin{bmatrix} c & 0 \\ 0 & a \end{bmatrix} \right)= \beta(ac) \begin{bmatrix} c & 0 \\ 0 & a \end{bmatrix}.$$
Since $\begin{bmatrix} 0 & x \\ \bar{x} & 0 \end{bmatrix}$ is an involution for $|x|=1$, it must be that
$$\Phi \left( \begin{bmatrix} 0 & x \\ \bar{x} & 0 \end{bmatrix} \right) = \begin{bmatrix} \pm \sqrt{ 1- |\lambda|^2} & \lambda \\ \bar{\lambda} & \mp \sqrt{1-|\lambda|^2} \end{bmatrix}$$
for some $\lambda \in \C$ with $|\lambda|\leq 1$.
Then
$$\beta(ac) \begin{bmatrix} a+(c-a)|\lambda|^2 & \pm \lambda (a-c) \sqrt{1-|\lambda|^2} \\ \bar{\lambda} (a-c) \sqrt{1-|\lambda|^2} & c+(a-c)|\lambda|^2 \end{bmatrix} = \beta(ac) \begin{bmatrix} c & 0 \\ 0 & a \end{bmatrix},$$
from which it follows that $\lambda (a-c) \sqrt{1-|\lambda|^2} = 0$. If $\lambda = 0$, then $a=c$, which is a contradiction. Hence it must be that $\sqrt{1-|\lambda|^2} = 0$, thus $|\lambda|=1$.
\end{proof}

From the previous lemma it follows that there exists a function on a unit circle $\lambda:\Gamma \to \Gamma$ with $\lambda(1)=1$ such that $\Phi \left( \begin{bmatrix} 0 & x \\ \bar{x} & 0 \end{bmatrix} \right)= \begin{bmatrix} 0 & \lambda(x) \\ \overline{\lambda(x)} & 0 \end{bmatrix}$.

\begin{Lemma}
Assume the set of assumptions \textup{(C2)}, $\Phi(-I)=-I$, and $\gamma \equiv \id$. For arbitrary $x,y\in \Gamma$ we have
$\lambda(x y)= \lambda(x) \lambda(y)$ and $\lambda(\bar{x})=\overline{\lambda(x)}$.
\end{Lemma}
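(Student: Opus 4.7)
The plan is to apply the Jordan triple product identity to pairs of off-antidiagonal involutions of the form $A_x = \begin{bmatrix} 0 & x \\ \bar{x} & 0 \end{bmatrix}$ with $x \in \Gamma$. The key observation is that the set of such matrices is closed under the J.T.P. operation: a direct $2\times 2$ computation shows
\[ A_x A_y A_x = \begin{bmatrix} 0 & x^2\bar{y} \\ \bar{x}^2 y & 0 \end{bmatrix} = A_{x^2\bar{y}}, \]
and since $|x^2\bar{y}|=1$, the previous lemma applies and yields $\Phi(A_x A_y A_x) = A_{\lambda(x^2\bar{y})}$.

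On the other hand, by the J.T.P. property one has $\Phi(A_x A_y A_x) = \Phi(A_x)\Phi(A_y)\Phi(A_x) = A_{\lambda(x)} A_{\lambda(y)} A_{\lambda(x)}$, and the very same matrix identity computed for the right-hand side produces $A_{\lambda(x)^2 \overline{\lambda(y)}}$. Comparing the upper-right entries gives the master equation
\[ \lambda(x^2 \bar{y}) = \lambda(x)^2\,\overline{\lambda(y)} \qquad (x,y \in \Gamma). \]

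From here the two assertions fall out by specialisation. Setting $y=1$ together with $\lambda(1)=1$ gives $\lambda(x^2)=\lambda(x)^2$, while setting $x=1$ gives $\lambda(\bar{y}) = \overline{\lambda(y)}$, which is the second claim. Substituting these two identities back into the master equation, the right-hand side rewrites as $\lambda(x^2)\lambda(\bar{y})$, so $\lambda(x^2\bar{y}) = \lambda(x^2)\lambda(\bar{y})$. Since every element of the unit circle $\Gamma$ admits a square root in $\Gamma$, every $u \in \Gamma$ is of the form $x^2$, and likewise every $v \in \Gamma$ is of the form $\bar{y}$; therefore $\lambda(uv) = \lambda(u)\lambda(v)$ for arbitrary $u,v \in \Gamma$, which is multiplicativity. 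There is no real obstacle here beyond the initial matrix product computation; the argument is essentially one application of the J.T.P. identity followed by two specialisations.
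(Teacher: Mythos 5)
Your proof is correct and follows essentially the same route as the paper: both apply the J.T.P.\ identity to the triple $A_xA_yA_x=A_{x^2\bar y}$ of anti\-diagonal involutions and then specialise. If anything, your ordering is slightly cleaner — you first extract $\lambda(\bar y)=\overline{\lambda(y)}$ by setting $x=1$ in the correctly stated master equation $\lambda(x^2\bar y)=\lambda(x)^2\overline{\lambda(y)}$, whereas the paper writes the intermediate identity as $\lambda(x)\lambda(\bar y)\lambda(x)=\lambda(x^2\bar y)$, tacitly identifying $\overline{\lambda(y)}$ with $\lambda(\bar y)$ before that equality has been established.
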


\begin{proof}
Take
$$\Phi \left( \begin{bmatrix} 0 & x \\ \bar{x} & 0 \end{bmatrix} \begin{bmatrix} 0 & y \\ \bar{y} & 0 \end{bmatrix} \begin{bmatrix} 0 & x \\ \bar{x} & 0 \end{bmatrix} \right) = \Phi \left( \begin{bmatrix} 0 & x^2 \bar{y} \\ \bar{x}^2 y & 0 \end{bmatrix} \right).$$
Hence $\lambda(x) \lambda(\bar{y}) \lambda(x)=\lambda(x^2 \bar{y})$. Taking $y=1$, we get $\lambda(x)^2=\lambda(x^2)$.

Next, take $x^2=z$ and $\bar{y}=u$. Then
$$\lambda(zu)=\lambda(x) \lambda(u) \lambda(x)=\lambda(z) \lambda(u)$$
and also $\lambda(1)=1$. Taking $|x|=1$, we get $\lambda(\bar{x})= \lambda(x^{-1})= \lambda(x)^{-1}= \overline{\lambda(x)}$, from which the second assertion follows.
\end{proof}

\begin{Lemma}
Under assumptions \textup{(C2)}, $\Phi(-I)=-I$ and $\gamma \equiv \id$ the function
$\lambda$ has one of the following forms:
$$\lambda(x)=x \quad \textrm{for every } x\in \Gamma \qquad \textrm{ or } \qquad \lambda(x)=\bar{x} \quad \textrm{for every } x\in \Gamma.$$
\end{Lemma}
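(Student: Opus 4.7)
The goal is to establish the pointwise identity $\mathrm{Re}(\lambda(x)) = \mathrm{Re}(x)$ for every $x \in \Gamma$. Combined with $|\lambda(x)| = 1$, this immediately forces $\lambda(x) \in \{x, \bar x\}$ at each point, and a short multiplicativity argument will then propagate one of the two choices globally.

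To obtain the real-part identity I would compute $\Phi$ on a single matrix via two different J.T.P. factorizations. Take $A = \begin{bmatrix} 1 & 1 \\ 1 & 2 \end{bmatrix}$, for which $\det A = 1$ and hence $\Phi(A) = \beta(1) A = A$ by the real-symmetric formula, and $B_x = \begin{bmatrix} 0 & x \\ \bar x & 0 \end{bmatrix}$ for $x \in \Gamma$. Direct multiplication gives
$$ M := A B_x A = \begin{bmatrix} x+\bar x & 2x+\bar x \\ x+2\bar x & 2(x+\bar x) \end{bmatrix}, $$
a Hermitian matrix with $\det M = -1$. The first factorization yields
$$ \Phi(M) = A \begin{bmatrix} 0 & \lambda(x) \\ \overline{\lambda(x)} & 0 \end{bmatrix} A = \begin{bmatrix} \lambda(x)+\overline{\lambda(x)} & 2\lambda(x)+\overline{\lambda(x)} \\ \lambda(x)+2\overline{\lambda(x)} & 2(\lambda(x)+\overline{\lambda(x)}) \end{bmatrix}. $$
For the second factorization, set $q = 2x+\bar x$ (nonzero since $|x|=1$), pick $\omega \in \Gamma$ with $\omega^2 = q/|q|$, and put $J = \begin{bmatrix} 0 & \omega \\ \bar\omega & 0 \end{bmatrix}$ and $R = \begin{bmatrix} 2(x+\bar x) & |q| \\ |q| & x+\bar x \end{bmatrix}$. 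A direct check shows $M = JRJ$, while $R$ is real symmetric with $\det R = -1$, so $\Phi(R) = \beta(-1) R = R$; combined with $\Phi(J) = \begin{bmatrix} 0 & \lambda(\omega) \\ \overline{\lambda(\omega)} & 0 \end{bmatrix}$ from the previous lemma and $\lambda(\omega)^2 = \lambda(\omega^2) = \lambda(q/|q|)$, the J.T.P. factorization $M = JRJ$ produces
$$ \Phi(M) = \begin{bmatrix} x+\bar x & |q|\lambda(q/|q|) \\ |q|\overline{\lambda(q/|q|)} & 2(x+\bar x) \end{bmatrix}. $$
Matching the $(1,1)$-entries of the two expressions gives $\lambda(x)+\overline{\lambda(x)} = x+\bar x$, that is, $\mathrm{Re}(\lambda(x)) = \mathrm{Re}(x)$.

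With $\lambda(x) \in \{x, \bar x\}$ at every point, set $S_+ = \{x \in \Gamma : \lambda(x) = x\}$ and $S_- = \{x \in \Gamma : \lambda(x) = \bar x\}$; these cover $\Gamma$ and meet in $\{1, -1\}$. If both $S_+ \setminus \{\pm 1\}$ and $S_- \setminus \{\pm 1\}$ were nonempty, choosing $x$ from the first and $y$ from the second would give $\lambda(xy) = x \bar y$, while $\lambda(xy) \in \{xy, \bar x \bar y\}$; the first possibility forces $y \in \{\pm 1\}$ and the second forces $x \in \{\pm 1\}$, a contradiction. Hence one of $S_\pm$ is contained in $\{\pm 1\}$, which gives $\lambda = \id$ on $\Gamma$ or $\lambda(x) = \bar x$ on $\Gamma$. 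The principal technical point is the secondary factorization $M = JRJ$: without it, the $\Phi$-formula derived earlier applies only to matrices with real off-diagonals and offers no way to read off the $(1,1)$-entry of $\Phi(M)$, whose off-diagonals are genuinely complex.
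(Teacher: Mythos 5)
Your proof is correct and follows essentially the same strategy as the paper: both arguments evaluate $\Phi$ on a Hermitian matrix with genuinely complex off-diagonal entry via two different J.T.P.\ factorizations (one through a real symmetric matrix, one through a phase involution $\begin{bmatrix} 0 & \omega \\ \bar\omega & 0\end{bmatrix}$), compare the real diagonal entries to get $\mathrm{Re}\,\lambda(x)=\mathrm{Re}\,x$, and combine this with $|\lambda(x)|=1$ to conclude $\lambda(x)\in\{x,\bar x\}$ pointwise; the paper uses $\begin{bmatrix}1 & x\\ \bar x & 0\end{bmatrix}\begin{bmatrix}0&1\\1&0\end{bmatrix}\begin{bmatrix}1&x\\\bar x&0\end{bmatrix}$ where you use $AB_xA$ with $A=\begin{bmatrix}1&1\\1&2\end{bmatrix}$, but this is only a cosmetic difference. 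Your closing multiplicativity argument that one alternative holds globally is in fact more careful than the paper's one-line assertion that the two forms ``cannot exist simultaneously.''
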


\begin{proof}
For $x\in\Gamma$ write $x= e^{i \phi}$ for some $\phi \in [0,2\pi)$. Take
\begin{align*}
\Phi \left( \begin{bmatrix} a & x \\ \bar{x} & 0 \end{bmatrix} \right) &= \Phi \left( \begin{bmatrix} 0 & e^{\frac{i\phi}{2}} \\ e^{-\frac{i\phi}{2}} & 0 \end{bmatrix} \begin{bmatrix} 0 & 1 \\ 1 & a \end{bmatrix} \begin{bmatrix} 0 & e^{\frac{i\phi}{2}} \\ e^{-\frac{i\phi}{2}} & 0 \end{bmatrix} \right) \\
&= \begin{bmatrix} 0 & \lambda(e^{\frac{i\phi}{2}}) \\ \lambda(e^{-\frac{i\phi}{2}}) & 0 \end{bmatrix} \! \begin{bmatrix} 0 & 1 \\ 1 & a \end{bmatrix} \! \begin{bmatrix} 0 & \lambda(e^{\frac{i\phi}{2}}) \\ \lambda(e^{-\frac{i\phi}{2}}) & 0 \end{bmatrix} = \begin{bmatrix} a & \lambda(x) \\ \lambda(\bar{x}) & 0 \end{bmatrix}.
\end{align*}
Then
\begin{align*}
\Phi \left( \begin{bmatrix} x+\bar{x} & x^2 \\ \bar{x}^2 & 0 \end{bmatrix} \right) &= \Phi \left( \begin{bmatrix} 1 & x \\ \bar{x} & 0 \end{bmatrix} \begin{bmatrix} 0 & 1 \\ 1 & 0 \end{bmatrix} \begin{bmatrix} 1 & x \\ \bar{x} & 0 \end{bmatrix} \right) \\
&= \begin{bmatrix} 1 & \lambda(x) \\ \overline{\lambda(x)} & 0 \end{bmatrix} \begin{bmatrix} 0 & 1 \\ 1 & 0 \end{bmatrix} \begin{bmatrix} 1 & \lambda(x) \\ \overline{\lambda(x)} & 0 \end{bmatrix}\\ & = \begin{bmatrix} \lambda(x)+\lambda(\bar{x}) & \lambda(x^2) \\ \lambda(\bar{x}^2) & 0 \end{bmatrix} = \begin{bmatrix} x+ \bar{x} & \lambda(x^2) \\ \lambda(\bar{x}^2) & 0 \end{bmatrix}.
\end{align*}
Thus we have $\lambda(x)+\lambda(\bar{x})=x+\bar{x}$, which implies $\operatorname{Re}(\lambda(x))=\operatorname{Re}(x)$. Since $|\lambda(x)|=|x|=1$, we have either $\lambda(x)= x$ or $\lambda(x)= \bar{x}$. It is clear that these two forms of $\lambda$ cannot exist simultaneously, hence $\lambda$ always takes a single form for every $x \in \Gamma$.
\end{proof}

If $\lambda(x)=\bar{x}$, define $\Phi'(A)=\Phi(\bar{A})$. Therefore we can assume without the loss of generality that $\lambda(x)=x$.

\begin{Lemma}
Under assumptions \textup{(C2)}, $\Phi(-I)=-I$, $\gamma \equiv \id$, and $\lambda \equiv \id$ we have
$\Phi \left( \begin{bmatrix} a & b \\ \bar{b} & c \end{bmatrix} \right) = \beta(ac-b\bar{b}) \begin{bmatrix} a & b \\ \bar{b} & c \end{bmatrix}$.
\end{Lemma}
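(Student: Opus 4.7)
The plan is to reduce the general Hermitian $2\times 2$ case to the real symmetric case that was established in the preceding lemma, via conjugation by an off-diagonal involution whose image under $\Phi$ is already known.

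First I would write $b=|b|e^{i\phi}$ with $\phi\in[0,2\pi)$ and set $U=\begin{bmatrix} 0 & e^{i\phi/2} \\ e^{-i\phi/2} & 0 \end{bmatrix}$. Since $|e^{i\phi/2}|=1$, $U$ is a Hermitian involution of the form covered by the previous lemma with $\lambda\equiv\id$, so $\Phi(U)=U$. A direct computation shows
\[
U\begin{bmatrix} c & |b| \\ |b| & a \end{bmatrix} U = \begin{bmatrix} a & |b|e^{i\phi} \\ |b|e^{-i\phi} & c \end{bmatrix} = \begin{bmatrix} a & b \\ \bar{b} & c \end{bmatrix}.
\]

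Next I would apply the Jordan triple product identity, using the already-established formula on the real symmetric middle factor:
\[
\Phi\!\left(\begin{bmatrix} a & b \\ \bar{b} & c \end{bmatrix}\right) = \Phi(U)\,\Phi\!\left(\begin{bmatrix} c & |b| \\ |b| & a \end{bmatrix}\right)\Phi(U) = U\cdot\beta(ac-|b|^2)\begin{bmatrix} c & |b| \\ |b| & a \end{bmatrix}\cdot U,
\]
and then collapse the sandwich back by the same computation to obtain
\[
\Phi\!\left(\begin{bmatrix} a & b \\ \bar{b} & c \end{bmatrix}\right) = \beta(ac-b\bar{b})\begin{bmatrix} a & b \\ \bar{b} & c \end{bmatrix},
\]
as required. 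The case $b=0$ is handled either directly by the real-symmetric lemma or by taking $\phi=0$ in the argument above.

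There is no real obstacle here; the only delicate point is the degenerate case $ac-b\bar{b}=0$, where $\beta$ is not defined. In that case $\rank A\leq 1$, so $\Phi(A)=0$ by the standing degeneracy hypothesis, and the stated formula is understood with the convention $\beta(0):=0$, consistent with $\Phi(0)=0$. Everything else reduces to plugging in the previously computed images of $U$ and of real symmetric matrices.
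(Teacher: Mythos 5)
Your proof is correct and essentially identical to the paper's: both write $b=|b|e^{i\phi}$ and conjugate the real symmetric matrix $\left[\begin{smallmatrix} c & |b| \\ |b| & a \end{smallmatrix}\right]$ by the involution $\left[\begin{smallmatrix} 0 & e^{i\phi/2} \\ e^{-i\phi/2} & 0 \end{smallmatrix}\right]$, which is fixed by $\Phi$ because $\lambda\equiv\id$, and then invoke the preceding real-symmetric lemma on the middle factor. Your closing remark about the rank-one case is a harmless clarification that the paper leaves implicit.
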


\begin{proof}
Take $a,c \in \RR$ and $b\in\C$. Write $b=|b| e^{i\phi}$ with $\phi \in [0,2\pi)$ and define $x=e^{i\frac{\phi}{2}}$.
Then 
\begin{align*}
\Phi \left( \begin{bmatrix} a & b \\ \bar{b} & c \end{bmatrix} \right) &= \Phi \left( \begin{bmatrix} 0 & x \\ \bar{x} & 0 \end{bmatrix} \begin{bmatrix} c & |b| \\ |b| & a \end{bmatrix} \begin{bmatrix} 0 & x \\ \bar{x} & 0 \end{bmatrix} \right) \\
& =  \begin{bmatrix} 0 & x \\ \bar{x} & 0 \end{bmatrix} \beta(ac-|b|^2) \begin{bmatrix} c & |b| \\ |b| & a \end{bmatrix} \begin{bmatrix}  0 & x \\ \bar{x} & 0 \end{bmatrix} = \beta(ac-b \bar{b}) \begin{bmatrix} a & b \\ \bar{b} & c \end{bmatrix},
\end{align*}
which concludes the proof.
\end{proof}

We have proved the following proposition.

\begin{Proposition} \label{thm6.23}
Let $\Phi: \s_2(\C) \to \s_2(\C)$ be a regular J.T.P. homomorphism such that
\begin{itemize}
	\item $\Phi(A)=0$ for every $A\in \s_2(\C)$ with $\rank A \leq 1$;
	\item $\Phi$ maps scalars to scalars;
	\item images of $\begin{bmatrix} 0 & 1 \\ 1 & 0 \end{bmatrix}$ and $\begin{bmatrix} -1 & 0 \\ 0 & 1 \end{bmatrix}$ don't commute.
\end{itemize}
Then there exist a unitary matrix $U$ and $\beta: \RR \to \RR$ a unital multiplicative map with $\beta(-1)=1$ such that
$$\Phi(A)= \begin{cases} \beta(\det A)\cdot U \widetilde{\Phi}(A) U^*; & \rank A=2 \\ 0 & \rank A \leq 1 \end{cases},$$
where $\widetilde{\Phi}$ has one of the following forms:
		\begin{multicols}{2}
		\begin{itemize}
			\item $\wt(A)=A$;
			\item $\wt(A)=\bar{A}$;
			\item $\wt(A)=A^{-1}$;
			\item $\wt(A)=\bar{A}^{-1}$;
			\item $\wt(A)=\eta(A)A$;
			\item $\wt(A)=\eta(A) \bar{A}$;
			\item $\wt(A)=\eta(A) A^{-1}$;
			\item $\wt(A)=\eta(A) \bar{A}^{-1}$;
		\end{itemize}
		\end{multicols}
for every $A\in \s_2(\C)$, where $\eta$ is the function defined in Lemma \ref{eta}.
\end{Proposition}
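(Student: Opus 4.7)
The plan is to package together the lemmas of this subsection into a single statement, tracking how each successive normalization contributes to the eight possible forms of $\wt$. First, under the non-commuting hypothesis, Lemma~\ref{lemma62} forces $\Phi\left(\begin{bmatrix} 0 & 1 \\ 1 & 0 \end{bmatrix}\right)$ to have the shape $\begin{bmatrix} 0 & b \\ \bar{b} & 0 \end{bmatrix}$ with $|b|=1$; the subsequent lemma produces a unitary $U$ such that replacing $\Phi$ by $A\mapsto U^*\Phi(A)U$ brings us into the assumption set (C2). This $U$ is the one that will appear conjugating in the final formula.

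Second, because $\Phi$ maps scalars to scalars, $\Phi(-I)\in\{\pm I\}$; if $\Phi(-I)=I$, we apply Lemma~\ref{eta} to pass from $\Phi$ to $\eta\cdot\Phi$, which is again a J.T.P.\ homomorphism and now satisfies $\Phi(-I)=-I$. Performing (or not performing) this reduction accounts for the split between the four forms of $\wt$ containing $\eta(A)$ and the four without. Next, the functional equation analysis shows that the multiplicative map $\gamma:\RR^*\to\RR^*$ describing the action on diagonal matrices must be either $\id$ or $x\mapsto x^{-1}$; in the latter case the substitution $\Phi'(A)=\Phi(A^{-1})$ (on rank $2$ matrices, zero otherwise) reduces us to $\gamma=\id$, accounting for the split between $A$ and $A^{-1}$ in $\wt$. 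Analogously, the circle map $\lambda:\Gamma\to\Gamma$ is shown to be either $\id$ or complex conjugation, and the substitution $\Phi'(A)=\Phi(\bar A)$ handles the latter, accounting for the split between $A$ and $\bar A$.

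Third, in the fully normalized case (C2) with $\Phi(-I)=-I$, $\gamma=\id$, and $\lambda=\id$, the final lemma of the subsection gives $\Phi(A)=\beta(\det A)\,A$ for rank $2$ matrices, where the multiplicative map $\beta$ is unital (since $\Phi(I)=I$) and satisfies $\beta(-1)=1$ (by evaluating on $\begin{bmatrix} -1 & 0 \\ 0 & 1\end{bmatrix}$, whose determinant is $-1$ and whose image equals itself). Unwinding the three two-fold reductions then produces exactly the eight listed forms of $\wt$, with the unitary $U$ from step one providing the outer conjugation and $\Phi(A)=0$ on rank $\le 1$ matrices coming directly from the degeneracy hypothesis. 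The main obstacle is the functional equation step for $\gamma$: it rests on a non-trivial estimate that bounds an additive function on an open interval and then invokes the classification theorem for such functions. Once $\gamma\in\{x,x^{-1}\}$ is in hand, the remaining work is careful bookkeeping of which reduction introduces which factor into the final formula.
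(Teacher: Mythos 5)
Your proposal is correct and follows essentially the same route as the paper: the proof of Proposition~\ref{thm6.23} is exactly the assembly of the subsection's lemmas, with the unitary normalization of $\Phi\left(\begin{bmatrix} 0 & 1 \\ 1 & 0 \end{bmatrix}\right)$, the $\eta$-twist for $\Phi(-I)=I$, the inversion substitution for $\gamma(x)=x^{-1}$, and the conjugation substitution for $\lambda(x)=\bar{x}$ each contributing one binary choice to the eight forms of $\widetilde{\Phi}$, and the final lemma giving $\Phi(A)=\beta(ac-b\bar b)A$ in the fully normalized case. The only cosmetic imprecision is that the outer unitary $U$ in the statement is the composition of the two normalizing unitaries (the one diagonalizing the images of $\begin{bmatrix} a & 0 \\ 0 & 1 \end{bmatrix}$ and the antidiagonal one fixing $\Phi\left(\begin{bmatrix} 0 & 1 \\ 1 & 0 \end{bmatrix}\right)$), not just the latter.
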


This case is covered by the form (vi) of Theorem \ref{main}.

\section*{References}

\end{document}